\theoremstyle{plain}
\newtheorem{theorem}{Theorem}[section]
\newtheorem{thm}[theorem]{Theorem}
\newtheorem{lemma}[theorem]{Lemma}
\newtheorem{prop}[theorem]{Proposition}
\newtheorem{example}[theorem]{Example}
\newtheorem{defn}[theorem]{Definition}
\newtheorem{corollary}[theorem]{Corollary}
\newtheorem{cor}[theorem]{Corollary}
\newtheorem{hypothesis}[theorem]{Hypothesis}
\newtheorem{question}[theorem]{Question}
\newcommand{\iso}{\cong}
\newcommand{\arr}{\rightarrow}
\DeclareMathOperator{\Hom}{Hom}
\newcommand{\C}{\mathbb{C}}
\newcommand{\Z}{\mathbb{Z}}
\newcommand{\Q}{\mathbb{Q}}
\newcommand{\mcB}{\mathcal{B}}
\newcommand{\mcG}{\mathcal{G}}
\newcommand{\mcP}{\mathcal{P}}
\newcommand{\mbP}{\mathbb{P}}
\newcommand{\mcU}{\mathcal{U}}
\newcommand{\mcR}{\mathcal{R}}
\newcommand{\mcX}{\mathcal{X}}
\newcommand{\mfg}{\mathfrak{g}}
\newcommand{\mfh}{\mathfrak{h}}
\newcommand{\mfn}{\mathfrak{n}}
\DeclareMathOperator{\rank}{rank}
\DeclareMathOperator{\cl}{cl}
\DeclareMathOperator{\Aut}{Aut}
\DeclareMathOperator{\Gr}{Gr}
\DeclareMathOperator{\ad}{ad}
\DeclareMathOperator{\Ch}{Ch}
\DeclareMathOperator{\supp}{S}
\DeclareMathOperator{\Supp}{Supp}
\DeclareMathOperator{\Red}{RW}
\DeclareMathOperator{\Isom}{Isom}
\begin{document}

\title{The isomorphism problem for Schubert varieties}

\author{Edward Richmond}
\email{edward.richmond@okstate.edu}

\author{William Slofstra}
\email{weslofst@uwaterloo.ca}

\begin{abstract}
Schubert varieties in the full flag variety of Kac-Moody type are indexed by
elements of the corresponding Weyl group. We give a practical criterion for
when two such Schubert varieties (from potentially different flag varieties)
are isomorphic, in terms of the Cartan matrix and reduced words for the
indexing Weyl group elements. As a corollary, we show that two such Schubert
varieties are isomorphic if and only if there is an isomorphism between their
integral cohomology rings that preserves the Schubert basis.
\end{abstract}
\maketitle

\section{Introduction}


Kac-Moody flag varieties are central objects of study in geometry, topology,
and representation theory. In particular, the finite-type Kac-Moody flag
varieties are the usual generalized flag varieties associated to semisimple Lie
groups. While generalized flag varieties are finite-dimensional, Kac-Moody flag
varieties of non-finite type are infinite-dimensional. However, all Kac-Moody
flag varieties can be realized as ind-varieties stratified by
finite-dimensional Schubert varieties \cite{Kumar87}. Schubert varieties are
themselves important examples of algebraic varieties, and their singularities
are closely connected with the representation theory of the corresponding
Kac-Moody groups and algebras. As a result, their geometry has been closely
studied (see, for instance, the surveys
\cite{Billey-Lakshmibai00,Abe-Billey16}).

We are interested in the isomorphism problem for Schubert varieties: when are
two Schubert varieties isomorphic as algebraic varieties?  This natural
geometric question was first raised for Schubert varieties by Develin, Martin,
and Reiner \cite{Develin-Martin-Reiner07}. They show that two partition
varieties (a subclass of type A Schubert varieties introduced by Ding
\cite{Ding97}) are isomorphic if and only if their integral cohomology rings
are isomorphic. Aside from this, and the case of toric Schubert varieties which
we cover below, the question does not seem to have been pursued further, even
in type A or other finite types. In this paper, we give a complete solution to
the isomorphism problem for Schubert varieties in full flag varieties of any
Kac-Moody type over $\C$.

To describe this solution, it is helpful to recall some basic facts about
Schubert varieties. We follow the conventions from \cite{Kumar02}. Recall that
the starting data for constructing a Kac-Moody Lie algebra (and subsequently
it's Kac-Moody group, full flag variety, and Schubert varieties) is a
\textbf{(generalized) Cartan matrix}, which is an integer matrix
$A:=[A_{st}]_{(s,t)\in S^2}$ indexed by some finite set $S$, such that for any
$s,t\in S$,
\begin{enumerate}\label{list:cartan}
    \item $A_{st} = 2$ if $s=t$,
    \item $A_{st} \leq 0$ if $s\neq t$, and
    \item $A_{st} = 0$ if and only if $A_{ts} = 0$.
\end{enumerate}
Let $\mcG:=\mcG(A)$ and $\mcB:=\mcB(A)$ denote the Kac-Moody group and Borel
subgroup of a Cartan matrix $A$. The full flag variety corresponding to $A$ is
the quotient $\mcX=\mcX(A):=\mcG/\mcB$. The Weyl group $W := W(A)$ of $A$ is the
crystallographic Coxeter group generated by $S$, and satisfying relations
$(st)^{m_{st}} = e$, where $m_{ss} = 1$, and
\begin{equation*}
    m_{st} = \begin{cases} 2 & \text{ if } A_{st} A_{ts} = 0 \\
                           3 & \text{ if } A_{st} A_{ts} = 1 \\
                           4 & \text{ if } A_{st} A_{ts} = 2 \\
                           6 & \text{ if } A_{st} A_{ts} = 3 \\
                          \infty & \text{ if } A_{st} A_{ts} \geq 4
            \end{cases}
\end{equation*}
for all $s,t\in S$. The pair $(W,S)$ forms a Coxeter system, and the elements
of $S$ are referred to as the \textbf{simple reflections} (or \textbf{simple
transpositions}) in $W$.  For every $w \in W$, the \textbf{Schubert variety
$X(w,A)$} is defined to be the closure of $\mcB w \mcB / \mcB$ in $\mcX$. It is
well known that $X(w,A)$ is an irreducible finite dimensional complex variety
of dimension $\ell(w)$, where $\ell:W\arr \Z_{\geq 0}$ is the length function
of the Coxeter system $(W,S)$.  A product $w = s_1 \cdots s_k$ of simple
reflections $s_1,\ldots,s_k$ in $W$ is a \textbf{reduced word} if $k =
\ell(w)$. Every element of $W$ can be written as a reduced word.

If $w=s$ is a simple reflection, then $X(w,A) \iso \mbP^1$, and hence all one-dimensional
Schubert varieties are isomorphic, independent of $A$. The case of
two-dimensional Schubert varieties is more interesting:
\begin{example}\label{Ex:hirzebruch}
    Suppose $X(w,A)$ is a two-dimensional Schubert variety, so $w = st \in S$
    for some $s \neq t$. Then $X(w,A)$ is a Zariski-locally-trivial $\mbP^1$-bundle
    over $\mbP^1$. The cohomology ring $H^*(X(w,A); \Z)$ is the free $\Z$-module
    generated by the Schubert classes $\xi_u$ for $u \in \{e,s,t,w\}$, where
    $\xi_u$ has degree $2 \ell(u)$ and $e$ is the identity. The ring structure
    is determined by the relations $\xi_s^2=0$, $\xi_s
    \cdot \xi_t = \xi_w$, and $\xi_t^2 = -A_{st} \xi_w$. From this, it follows
    that $X(w,A)$ is the Hirzebruch surface $\Sigma_n$, where $n = -A_{st}$.

    It is well-known that $\Sigma_n \iso \Sigma_m$ if and only if $m=n$.  Hence
    $X(w,A) \iso X(w',A')$ with $w'=s' t'$ if and only if $A_{st} = A'_{s't'}$.
    In particular, the isomorphism type of $X(w,A)$ depends only on the value
    of $A_{st}$, not on $A_{ts}$.
\end{example}

Let $\leq$ denote the Bruhat order for the Coxeter system $(W,S)$, and
define the \textbf{support} of an element $w \in W$ to be the set
\begin{equation*}
    S(w) := \{ s\in S : s \leq w\}.
\end{equation*}
A simple reflection $s \in S$ belongs to $S(w)$ if and only if $s$ appears in
some (or equivalently, every) reduced word of $w$. Inspired by the example
of Hirzebruch surfaces, we define:
\begin{defn}\label{D:Cartan_equiv}
    Let $A$ and $A'$ be Cartan matrices over $S$ and $S'$ respectively.  Let $w \in W(A)$ and $w' \in W(A')$. We say the pair $(w,A)$ and $(w',A')$ are \textbf{Cartan equivalent}
   if there is a bijection $\sigma:S(w) \arr S(w')$ such that the following are satisfied:
    \begin{enumerate}[(a)]
        \item There are reduced words $w = s_{1} \cdots s_{k}$ and $w'=t_{1}\cdots t_{k}$ such that
            $\sigma(s_i)=t_i$ for all $1 \leq i \leq k$.
        \item If $s s' \leq w$ for $s \neq s' \in S(w)$, then $A_{ss'} = A'_{\sigma(s)\sigma(s')}$.
    \end{enumerate}
\end{defn}
Although it's not immediately obvious from the definition, we show in the next
section (see Corollary \ref{C:equivalence}) that Cartan equivalence is an equivalence
relation (and in particular is symmetric). We also refer to the bijection $\sigma: S(w)\rightarrow S(w')$ in Definition \ref{D:Cartan_equiv} as a Cartan equivalence.

Our main result is that, somewhat surprisingly, two Schubert
varieties $X(w,A)$ and $X(w',A')$ are isomorphic if and only if $(w,A)$
and $(w',A')$ are Cartan equivalent. As part of the proof, we
give a cohomological characterization of isomorphism as well.
Recall that Schubert varieties are stratified by their Schubert subvarieties,
and this stratification implies that the Schubert classes form a basis for the
integral cohomology ring $H^*(X(w,A)) := H^*(X(w,A);\Z)$.
\begin{thm}\label{T:iso}
    Let $A$ and $A'$ be Cartan matrices over $S$ and $S'$ respectively. Let $w \in W(A)$ and $w' \in W(A')$. Then the following are equivalent.
    \begin{enumerate}
    \item The pairs $(w,A)$ and $(w',A')$ are Cartan equivalent.
    \item The Schubert varieties $X(w,A)$ and $X(w',A')$ are algebraically isomorphic.
    \item There is a graded ring isomorphism $\phi:H^*(X(w,A))\rightarrow H^*(X(w',A'))$ which
        sends the Schubert basis for $H^*(X(w,A))$ to the Schubert basis for $H^*(X(w',A'))$.
    \end{enumerate}
\end{thm}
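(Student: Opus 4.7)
The plan is to prove the cycle of implications $(1) \Rightarrow (2) \Rightarrow (3) \Rightarrow (1)$.

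For $(1) \Rightarrow (2)$, I would construct the isomorphism through Bott--Samelson resolutions. Given Cartan-equivalent reduced words $w = s_1 \cdots s_k$ and $w' = t_1 \cdots t_k$ with $\sigma(s_i) = t_i$, the associated Bott--Samelson varieties are iterated $\mbP^1$-bundles whose structure (and the birational contractions onto $X(w,A)$ and $X(w',A')$) is built entirely from the Cartan entries $A_{s_i s_j}$. The core claim is that only entries $A_{ss'}$ with $ss' \leq w$ actually enter the construction of $X(w,A)$, which is exactly what condition (b) of Cartan equivalence preserves. Hence the Bott--Samelson varieties and their contractions can be identified on the two sides, descending to an isomorphism $X(w,A) \cong X(w',A')$.

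For $(2) \Rightarrow (3)$, an algebraic isomorphism $f: X(w,A) \to X(w',A')$ immediately induces a graded ring isomorphism $f^*$ on integral cohomology; the substantive content is that $f^*$ must preserve the Schubert basis. This requires an intrinsic characterization of Schubert subvarieties from the variety structure alone. One natural route is via torus actions: Schubert subvarieties are the irreducible invariant subvarieties containing the base point for a maximal torus in $\Aut(X(w,A))$, and one uses conjugacy of maximal tori to replace $f$ by a torus-equivariant map. An alternative is a purely cohomological characterization of Schubert classes as a distinguished effective basis with suitable minimality or positivity properties.

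For $(3) \Rightarrow (1)$, the basis-preserving cohomology isomorphism $\phi$ gives a length-preserving bijection $[e,w] \to [e,w']$ of Bruhat intervals. The Bruhat order itself is encoded in the ring structure through non-vanishing of the Schubert products $\xi_u \cdot \xi_v$ and a Chevalley-type criterion for covers, so $\phi$ restricts to a bijection $\sigma: S(w) \to S(w')$ that preserves covers. Condition (b) of Cartan equivalence follows from Example \ref{Ex:hirzebruch}: whenever $ss' \leq w$, the entry $A_{ss'}$ is read off from the relation $\xi_{s'}^2 = -A_{ss'} \xi_{ss'}$ inside the subring $H^*(X(ss',A))$. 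For condition (a), reduced words of $w$ correspond to saturated Bruhat chains from $e$ to $w$ whose covers are labelled by simple reflections; since $\phi$ preserves cover data and the labels transport through $\sigma$, any reduced word for $w$ transports to a reduced word for $w'$ of the required matching form.

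The principal obstacle is $(2) \Rightarrow (3)$: proving the rigidity statement that abstract algebraic isomorphisms of Schubert varieties must (possibly after composition with an automorphism) carry Schubert subvarieties to Schubert subvarieties. A secondary technical point is in $(1) \Rightarrow (2)$: justifying precisely that $X(w,A)$ is determined by Cartan data restricted to pairs $ss' \leq w$, which is the geometric content that condition (b) is designed to capture.
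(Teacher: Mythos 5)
Your overall architecture matches the paper's: the same cycle $(1)\Rightarrow(2)\Rightarrow(3)\Rightarrow(1)$, and your sketch of $(3)\Rightarrow(1)$ is essentially the paper's argument (Chevalley's formula recovers $A_{ss'}$ for $ss'\leq w$ from the expansion of $\xi_{s'}^2$, and the Bruhat interval with enough labelling data to recover reduced words is read off from supports of products). Even there, though, be careful: saturated Bruhat chains from $e$ to $w$ are labelled by arbitrary reflections, not simple ones, so they vastly outnumber reduced words; you must explain how the ring structure singles out the covers $u\lessdot v$ with $v=us$ for $s$ simple. The paper does this via the subrings $H^J$ generated by $\{\xi_s : s\notin J\}$ and their support sets $E^J$, which identify $W^J\cap[e,w]$ and hence descent sets (Lemmas \ref{L:coset_reps}--\ref{L:reduced_words}); your phrase ``the labels transport through $\sigma$'' elides this step. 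Likewise, extracting $A_{ss'}$ requires first identifying \emph{which} degree-4 basis element is $\xi_{ss'}$, which the paper does via $\Supp(\xi_s\xi_{s'})\cap\Supp(\xi_{s'}^2)$ (Lemma \ref{L:simple_products}); there is no intrinsic ``subring $H^*(X(ss',A))$'' to restrict to.

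The two implications you flag as obstacles are genuine gaps, and they are where all the work lies. For $(2)\Rightarrow(3)$, your torus-action route fails as stated: the base point $e\mcB$ is not intrinsic to the abstract variety, and the irreducible $T$-invariant subvarieties of $X(w,A)$ are far more numerous than the Schubert subvarieties, so that characterization does not pin them down. Your alternative (``a distinguished effective basis with positivity properties'') is the correct idea and is exactly how the paper proceeds, but it rests on a nontrivial theorem of Kumar--Nori that every effective class is a nonnegative integer combination of Schubert classes, whence the Schubert cycles are the extremal rays of the effective cone and are preserved by any isomorphism; without invoking that result the implication is unproved. For $(1)\Rightarrow(2)$, the assertion that the Bott--Samelson variety \emph{and its contraction onto} $X(w,A)$ depend only on the entries $A_{ss'}$ with $ss'\leq w$ is precisely the theorem to be proved, not a ``secondary technical point'': the Schubert variety is the image of the Bott--Samelson variety under a map determined by a highest-weight representation, and controlling that image requires showing the relevant matrix coefficients are independent of the entries $A_{st}$ with $st\not\leq w$. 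The paper establishes this by working in a universal enveloping algebra over the polynomial ring $\C[a_{st}]$ and running a normal-form computation (Lemmas \ref{L:normal}--\ref{L:equivariant}) to build a linear map $\pi\colon L^{\max}(\lambda,A)\to L^{\max}(\lambda',A')$ restricting to a bijection $X(w,A)\to X(w',A')$, then invoking normality. A Bott--Samelson route may well be viable, but as written it assumes its conclusion at the decisive step.
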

Theorem \ref{T:iso} can be readily applied in many situations. To illustrate
this, we give several examples and applications in Section
\ref{SS:examples_and_apps}. We remark that the theorem does not hold if we
drop the requirement in part (3) that the isomorphism preserve Schubert bases.
Indeed, returning to Example \ref{Ex:hirzebruch}, two Hirzebruch surfaces
$\Sigma_n$ and $\Sigma_m$ have isomorphic integral cohomology rings if and only
if they are diffeomorphic, which happens if and only if $m = n \mod 2$.

Hirzebruch surfaces are also examples of toric varieties. In general, a
Schubert variety $X(w,A)$ is a toric variety if and only if $w = s_1 \cdots
s_k$ for distinct simple reflections $s_1,\ldots,s_k \in S$
\cite{Karuppuchamy13}. It follows from this that toric Schubert varieties are
toric manifolds (smooth compact toric varieties).  It is well known that
isomorphism classes of toric varieties are determined by the combinatorial data
in their associated fans \cite{Fulton93}.  In addition, a result of Masuda
states that toric manifolds are isomorphic if and only if their equivariant
cohomology rings are weakly isomorphic \cite{Masuda08}.  Both of these criteria apply to toric
Schubert varieties in particular. It is an open question as to whether toric
manifolds are cohomologically rigid, in the sense that any two toric manifolds
with isomorphic cohomology rings are diffeomorphic or homeomorphic. We can
ask the same question for Schubert varieties:
\begin{question}\label{Q:rigid}
    Suppose $X(w,A)$ and $X(w',A')$ are both smooth, and  $H^*(X(w,A))$ and
    $H^*(X(w',A'))$ are isomorphic as graded rings. Are $X(w,A)$ and
    $X(w',A')$ diffeomorphic or homeomorphic?
\end{question}
If integral cohomology is replaced with rational cohomology $H^*(X(w,A); \Q)$,
then Question \ref{Q:rigid} has a negative answer, as all Hirzebruch surfaces have
isomorphic cohomology rings over $\Q$. Another counterexample is provided by
the flag varieties of finite types $B_n$ and $C_n$, since the cohomology rings
are isomorphic over $\Q$, but not over $\Z$
\cite{Bergeron-Sottile02,Borel53,Edidin-Graham}.

Finally, recall that if $A$ is a Cartan matrix over $S$, then for any
subset $J \subseteq S$ there is a partial flag variety $\mcX^J := \mcG(A) / \mcP(A)_J$,
where $\mcP(A)_J$ is the parabolic subgroup generated by the elements of $J$.
Partial flag varieties are also stratified by Schubert varieties $X^J(w,A)$,
where $X^J(w,A)$ is defined as the closure of $\mcB w \mcP_J / \mcP_J$ in $\mcX^J$.
Partial flag varieties include familiar examples such as the Grassmannians.
Next we show the notion of Cartan equivalence is neither necessary nor sufficient for distinguishing Schubert
varieties in partial flag varieties.

\begin{example}\label{Ex:partial_counter}
    Consider the Cartan matrices of types $A_3$ and $C_2$ given by
    \begin{equation*}
        A_3 = \left(\begin{matrix}
            2 & -1 & 0\\
            -1 & 2 & -1\\
            0 & -1 & 2
        \end{matrix}\right)\quad\text{and}\quad
        C_2 = \left(\begin{matrix} 2 & -2 \\
        -1 & 2  \end{matrix}\right)
    \end{equation*}
    over index sets $S=\{s_1,s_2,s_3\}$ and $S'=\{s'_1,s'_2\}$ respectively.
    First consider the case where $J = \{s_2,s_3\}$ and $J' = \{s'_2\}$ with $w=s_3s_2 s_1$ and $w'=s'_1 s'_2 s'_1$.
    Then $X^J(w, A_3)$ and $X^{J'}(w', C_2)$ are both isomorphic to the projective space $\mbP^3$.  However $|S(w)|=3$ and $|S(w')|=2$ and hence $(w,A_3)$ and $(w',C_3)$ cannot be Cartan equivalent.

    \smallskip

    Conversely, consider $w = s_2 s_1 s_3 s_2$ and $J=\{s_1,s_3\}$.  Then $X^{\emptyset}(w,A_3)
    = X(w,A_3)$ is singular, whereas $X^{J}(w,A_3)$ is the Grassmannian $\Gr(2,4)$,
    and hence is smooth.  Clearly $(w,A_3)$ is Cartan equivalent to itself; however $X^{\emptyset}(w,A_3)\ncong X^{J}(w,A_3)$.
    \end{example}

The class of Schubert varieties of partial flag varieties is much broader than
the class of Schubert varieties in full flag varieties.  It is not clear
whether the isomorphism problem for this broader class should have a simple
combinatorial solution like Cartan equivalence. We leave
this as an open problem.




In proving Theorem \ref{T:iso}, we need to show that a Cartan equivalence can
be constructed from the isomorphism between cohomology rings.  To do this, we
prove that the Cartan matrix entries $A_{st}$ for $st \leq w$ and the reduced
words for $w$ can be recovered from $H^*(X(w),A)$ along with its Schubert
basis.  This gives a procedure to solve a related problem of independent
interest: constructing a presentation of a Schubert variety (as a Schubert
variety) solely from geometric data.  We outline this procedure in Section \ref{SS:cohomology2}.

\subsection{Examples and applications of Theorem \ref{T:iso}}\label{SS:examples_and_apps}
In this section, we illustrate the potential applications of Theorem \ref{T:iso}
with several examples. We start with a basic example of how the theorem works:

\begin{example}\label{Ex:Isom_0}
Let $A$ be the following Cartan matrix over $S=\{s_1,s_2,s_3,s_4\}$:
$$\left(\begin{matrix}
2 & -1 & -1 & 0\\
-1 & 2 & -3 & 0\\
-2 & -1 & 2 & -5\\
0 & 0 & -3 & 2
\end{matrix}\right)$$
Let $w$ be the reduced word $s_2s_3s_2s_1s_4$ in $W(A)$.  To illustrate Definition \ref{D:Cartan_equiv} part (2), we list all elements $s_is_j\leq w$ in the corresponding positions within the matrix $A$ and then highlight the relevant data in $A$ needed to determine the Cartan equivalence class of $(w,A)$.
\begin{equation}\label{Eq:Cartan_example}
\left(\begin{matrix}
 &  &  & s_1s_4\\
s_2s_1 &  & s_2s_3 & s_2s_4\\
s_3s_1 & s_3s_2 &  & s_3s_4\\
s_4s_1 & s_4s_2 &  &
\end{matrix}\right)\quad \Rightarrow \quad
\left(\begin{matrix}
2 & * & * & 0\\
-1 & 2 & -3 & 0\\
-2 & -1 & 2 & -5\\
0 & 0 & * & 2
\end{matrix}\right)
\end{equation}
The ``$*$'' entries correspond to pairs of indices $(s_i, s_j)$, $i \neq j$,
such that $s_i s_j \not\leq w$.
Suppose $A'$ is another Cartan matrix over $S$ which agrees with
$A$ on all the non-starred entries. By Lemma \ref{L:cartan}, the word
$s_2 s_3 s_2 s_1 s_4$ will be reduced in $W(A')$ as well, so $(w,A)$
and $(w,A')$ will be Cartan equivalent. By Theorem \ref{T:iso}, we would then
have $X(w,A)\iso X(w',A')$.

More generally, if a Cartan matrix $A'$ contains a submatrix of the form on the
right in Equation \eqref{Eq:Cartan_example} (where the ``$*$'' entries can be
any number), then there will be an element $w'\in W(A')$ such that $(w',A')$ is
Cartan equivalent to $(w,A)$, and hence $X(w,A) \iso X(w',A')$.
\end{example}

One of the advantages of Theorem \ref{T:iso} is that it makes it easy to determine
whether Schubert varieties in different types are isomorphic. For instance:
\begin{example}\label{Ex:Isom_1}
Consider the Cartan matrices
$$A_3=\left(\begin{matrix}
2 & -1 & 0\\
-1 & 2 & -1\\
0 & -1 & 2
\end{matrix}\right)\quad\text{and}\quad B_3=\left(\begin{matrix}
2 & -1 & 0\\
-1 & 2 & -1\\
0 & -2 & 2
\end{matrix}\right)$$
of types $A_3$ and $B_3$ over $S=\{s_1, s_2, s_3\}$.  Theorem \ref{T:iso} implies that $$X(s_1s_2s_3, A_3)\cong X(s_1s_2s_3, B_3)$$ and $$X(s_3s_2s_1, A_3)\ncong X(s_3s_2s_1, B_3).$$ Also note that
$$X(s_2s_1s_3, A_3)\ncong X(s_1s_3s_2, A_3).$$
Note that every Schubert variety in this example is smooth and has the same Poincar\'{e} polynomial.  Hence the varieties in this example cannot be distinguished by these properties. In the last example, we have a case where $X(w,A)\ncong X(w^{-1},A)$.
\end{example}

For any $A$ is a Cartan matrix indexed by $S$, and $J \subseteq S$, let
$A_J:=[A_{st}]_{(s,t)\in J^2}$ denote the induced Cartan matrix over $J$. The
group $W(A_J)$ can be thought of as the subgroup of $W(A)$ generated by $J$,
and is typically denoted by $W_J$.  If $w\in W(A)$, then $w\in W(A_{S(w)})$.
Something that shows up in the previous example is that the isomorphism type of
$X(w,A)$ depends only on $A_{S(w)}$.  In fact, $X(w,A) \iso X(w,A_{S(w)})$.
While this follows from Theorem \ref{T:iso}, it can also be easily proved
without it (see for instance \cite[Lemma 4.8]{Richmond-Slofstra16}). We say $w\in W(A)$ is \textbf{fully supported} if $S(w) = S$.

Finite and affine type Cartan matrices are classified by Dynkin diagrams, which
are graphs with simple edges and decorated multiedges (See Figure \ref{Figure:Dynkin_diagrams}). The vertex set of the
Dynkin diagram is the index set $S$ of the Cartan matrix $A$, and the edge or
multiedge between vertices $s$ and $t$ determines the matrix values $A_{st}$ and $A_{ts}$. For
any Cartan matrix $A$, we can also consider the Coxeter graph of $A$,
which is the graph with vertex set $S$, and $m_{st}-2$ edges between vertices
$s$ and $t$.  In finite and affine types, the Coxeter graph is the Dynkin
diagram with decorations removed from the multiedges.

\begin{figure}[p]
$$
\begin{tabular}{|c|c|c|c|}\hline
Finite type &Dynkin diagram & $\Aut(A)$ & $\Aut(\Gamma(A))$ \\ \hline
$A_n$ & $\dynkin A{}$& $\Z_2$& $\Z_2$ \\ \hline
$B_n\, (n\geq 3)$ & $\dynkin B{}$& $\Z_1$ & $\Z_2$ \\ \hline
$C_n\, (n\geq 2)$ & $\dynkin C{}$& $\Z_1$ & $\Z_2$ \\ \hline
$D_4$ & $\dynkin D4{}$ & $S_3$ & $S_3$ \\ \hline
$D_n\, (n\geq 5)$ & $\dynkin D{}$& $\Z_2$& $\Z_2$\\ \hline
$E_6$ & $\dynkin E6$& $\Z_2$ &$\Z_2$\\ \hline
$E_7$ & $\dynkin E7$&$\Z_1$ &$\Z_1$\\ \hline
$E_8$ & $\dynkin E8$&$\Z_1$ &$\Z_1$\\ \hline
$F_4$ & $\dynkin F4$&$\Z_1$ &$\Z_2$\\ \hline
$G_2$ & $\dynkin G2$&$\Z_1$ &$\Z_2$\\ \hline
\end{tabular}
$$

$$
\begin{tabular}{|c|c|c|c|}\hline
Affine type &Dynkin diagram & $\Aut(A)$ & $\Aut(\Gamma(A))$ \\ \hline
$\tilde A_n$ & $\dynkin[extended]A{}$& $I_2(n)$ & $I_2(n)$\\ \hline
$\tilde B_n\, (n\geq 3)$ & $\dynkin[extended]B{}$& $\Z_1$& $\Z_2$\\ \hline
$\tilde C_n\, (n\geq 2)$ & $\dynkin[extended]C{}$& $\Z_2$&$\Z_2$\\ \hline
$\tilde D_4$ & $\dynkin[extended]D4{}$ & $S_4$ & $S_4$\\ \hline
$\tilde D_n\, (n\geq 5)$ & $\dynkin[extended]D{}$ & $\Z_2\times\Z_2\times\Z_2$& $\Z_2\times\Z_2\times\Z_2$\\ \hline
$\tilde E_6$ & $\dynkin[extended]E6{}$ & $S_3$ & $S_3$\\ \hline
$\tilde E_7$ & $\dynkin[extended]E7{}$&$\Z_2$ &$\Z_2$\\ \hline
$\tilde E_8$ & $\dynkin[extended]E8{}$&$\Z_1$&$\Z_1$\\ \hline
$\tilde F_4$ & $\dynkin[extended]F4{}$&$\Z_1$&$\Z_2$\\ \hline
$\tilde G_2$ & $\dynkin[extended]G2{}$&$\Z_1$&$\Z_2$\\ \hline
$\tilde A^2_{2}$ & \dynkin[extended]A[2]2 & $\Z_1$ & $\Z_2$\\ \hline
$\tilde A^2_{2n}$ & \dynkin[extended]A[2]{even} & $\Z_1$ & $\Z_2$\\ \hline
$\tilde A^2_{2n+1}$ & \dynkin[extended]A[2]{odd} & $\Z_2$ & $\Z_2$\\ \hline
$\tilde D^2_{n}$ & \dynkin[extended]D[2]{} & $\Z_2$ & $\Z_2$\\ \hline
$\tilde D^3_{4}$ & \dynkin[extended]D[3]4 & $\Z_1$ & $\Z_2$\\ \hline
$\tilde E^2_{6}$ & \dynkin[extended]E[2]{6} & $\Z_1$ & $\Z_2$\\ \hline
\end{tabular}
$$
\caption{
Dynkin diagrams and automorphism groups of all finite and affine Lie types.
$S_n$ denotes the symmetric group, and $I_2(n)$ denotes the dihedral group.}\label{Figure:Dynkin_diagrams}
\end{figure}

A particularly interesting case to look at is finite versus infinite type.
Recall that a Cartan matrix $A$ is \textbf{simply-laced} if
$A_{st} \in \{0,-1\}$ for all $s \neq t \in S$. Equivalently, a Cartan matrix
is simply-laced if the exponents $m_{st}$ of the Coxeter relations are either
$2$ or $3$, i.e. the Coxeter graph is simple. Given a Cartan matrix $A$, it is
convenient to define the \textbf{simple Coxeter graph} $\Gamma(A):=(S,E)$
to be the graph with vertex set $S$ and edges $(s,t)\in E$ if and only
if $A_{st} \neq 0$. In other words, $\Gamma(A)$ is the underlying simple graph
of the Coxeter graph of $A$.
\begin{lemma}\label{L:simple}
    If $X(w,A) \iso X(w',A')$, then $\Gamma(A_{S(w)}) \iso \Gamma(A_{S(w')})$.
\end{lemma}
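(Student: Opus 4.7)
The plan is to reduce directly to Theorem \ref{T:iso} and then extract a graph isomorphism from the Cartan equivalence data. First I would invoke Theorem \ref{T:iso} to pass from the hypothesis $X(w,A) \iso X(w',A')$ to a Cartan equivalence $\sigma : S(w) \arr S(w')$, so that conditions (a) and (b) of Definition \ref{D:Cartan_equiv} are in hand.

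It remains to verify that $\sigma$ is a graph isomorphism $\Gamma(A_{S(w)}) \arr \Gamma(A'_{S(w')})$. It is already a bijection of vertex sets, so the task is to show that $\{s,t\}$ is an edge of $\Gamma(A_{S(w)})$ if and only if $\{\sigma(s), \sigma(t)\}$ is an edge of $\Gamma(A'_{S(w')})$. The key combinatorial observation is that for any distinct $s,t \in S(w)$, either $st \leq w$ or $ts \leq w$: picking any reduced word $w = s_1 \cdots s_k$ and taking the first positions $i$ and $j$ at which $s$ and $t$ respectively appear, the two-letter subword at positions $\min(i,j) < \max(i,j)$ is reduced (since $s\neq t$) and equals $st$ or $ts$, so it lies below $w$ in the Bruhat order via the subword characterization.

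Applying condition (b) of Cartan equivalence to whichever of $st$ or $ts$ lies below $w$ yields $A_{st} = A'_{\sigma(s)\sigma(t)}$ (or the transposed identity). Combined with axiom (3) of Cartan matrices, which forces $A_{st} = 0$ if and only if $A_{ts} = 0$ (and similarly for $A'$), this gives $A_{st} \neq 0$ if and only if $A'_{\sigma(s)\sigma(t)} \neq 0$, which is exactly the statement that $\sigma$ preserves edges. By the same equivalence, non-edges map to non-edges, so $\sigma$ is a graph isomorphism.

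I do not anticipate any real obstacle here: once Theorem \ref{T:iso} is in hand, the lemma is essentially a matter of unpacking Definition \ref{D:Cartan_equiv} and the definition of $\Gamma(A)$. The only mildly non-obvious step is the length-two subword observation, and it is immediate from choosing first occurrences in a reduced word; everything else is formal.
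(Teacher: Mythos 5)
Your proposal is correct and follows essentially the same route as the paper: pass to a Cartan equivalence $\sigma$, note that for distinct $s,t \in S(w)$ one of $st$ or $ts$ lies below $w$, apply condition (b) of Definition \ref{D:Cartan_equiv}, and use axiom (3) of Cartan matrices to conclude that $\sigma$ preserves edges and non-edges. The only difference is that you spell out the subword argument for ``$st \leq w$ or $ts \leq w$,'' which the paper states without elaboration (it also follows from its Lemma \ref{L:ij}).
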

\begin{proof}
    Let $\sigma : S(w) \arr S(w')$ be a Cartan equivalence. If $s,t \in S(w)$,
    then either $st \leq w$ or $ts \leq w$. If $st \leq w$, then $A_{st} =
    A_{\sigma(s)\sigma(t)}'$, while if $ts \leq w$ then $A_{ts} =
    A_{\sigma(t)\sigma(s)}'$. Using the fact that $A_{st} = 0$ if and only
    if $A_{ts} = 0$, we conclude that $A_{st}=0$ if and only if $A_{\sigma(s)\sigma(t)}'=0$.  Therefore $\sigma$ is a graph isomorphism.
\end{proof}

\begin{cor}\label{C:simplylaced}
    If $X(w,A)$ is isomorphic to a Schubert variety in a finite type flag variety,
    then $\Gamma(A_{S(w)})$ is a finite type Coxeter graph. In particular, if $A_{S(w)}$
    is simply-laced, then $X(w,A)$ is isomorphic to a Schubert variety of finite
    type if and only if $A_{S(w)}$ is of finite type.
\end{cor}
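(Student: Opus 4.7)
The approach is to leverage Lemma \ref{L:simple}, which does the main geometric work, and then reduce the rest to the classification of finite-type Dynkin diagrams. For the first assertion, suppose $X(w,A)\iso X(w',A')$ with $A'$ of finite type. Lemma \ref{L:simple} provides a graph isomorphism $\Gamma(A_{S(w)})\iso\Gamma(A'_{S(w')})$. I would then invoke the standard classification fact that any induced Cartan submatrix of a finite-type Cartan matrix is itself of finite type---equivalently, any induced subgraph of a finite-type Dynkin diagram decomposes as a disjoint union of finite-type Dynkin diagrams. Consequently $\Gamma(A'_{S(w')})$, and hence $\Gamma(A_{S(w)})$, is a finite-type simple Coxeter graph.

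For the ``in particular'' statement, I plan to use the rigidity of simply-laced Cartan matrices. Since all off-diagonal entries of a simply-laced Cartan matrix lie in $\{0,-1\}$, such a matrix is completely determined by its simple Coxeter graph. Reading Figure \ref{Figure:Dynkin_diagrams}, the finite-type simple Coxeter graphs are exactly the Dynkin diagrams of the simply-laced finite types $A_n$, $D_n$, $E_6$, $E_7$, $E_8$; the non-simply-laced finite types $B_n$, $C_n$, $F_4$, $G_2$ contribute nothing new since their underlying simple graphs coincide with those of $A_n$, $A_n$, $A_4$, $A_2$ respectively. Hence if $A_{S(w)}$ is simply-laced and $\Gamma(A_{S(w)})$ is a finite-type simple Coxeter graph, then $A_{S(w)}$ itself is of finite type. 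Combined with the first assertion, this gives the forward direction.

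The converse of the ``if and only if'' is immediate from the observation (recorded in the paper just after Example \ref{Ex:Isom_0}) that $X(w,A)\iso X(w,A_{S(w)})$: if $A_{S(w)}$ is of finite type, this exhibits $X(w,A)$ as a Schubert variety in the full flag variety of the finite-type Cartan matrix $A_{S(w)}$. I do not expect any substantive obstacle, as Lemma \ref{L:simple} already encodes all the nontrivial geometric content; what remains is a short unpacking of the classification of finite-type Dynkin diagrams together with the simply-laced rigidity observation.
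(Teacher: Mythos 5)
Your proposal is correct and follows essentially the same route as the paper: apply Lemma \ref{L:simple} to get $\Gamma(A_{S(w)})\iso\Gamma(A'_{S(w')})$, use the classification of finite-type Dynkin diagrams (and the observation that their underlying simple graphs are again of finite type) to conclude the first claim, and then use the fact that a simply-laced Cartan matrix is determined by $\Gamma(A_{S(w)})$ for the second. The only difference is that you spell out the converse direction via $X(w,A)\iso X(w,A_{S(w)})$, which the paper leaves implicit; this is a reasonable addition but not a change of approach.
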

\begin{proof}
    It is easy to verify from Figure \ref{Figure:Dynkin_diagrams} that the
    underlying simple graph of a finite type Dynkin diagram is also a finite
    type Dynkin diagram. If $X(w,A) \iso X(w',A')$ where $A'$ has finite type,
    then by Lemma \ref{L:simple}, the graph $\Gamma(A_{S(w)})$ is isomorphic to $\Gamma(A'_{S(w')})$
    and hence has finite type. If $A_{S(w)}$ is simply-laced, then $\Gamma(A_{S(w)})$
    is the Coxeter graph of $A_{S(w)}$, and hence $A_{S(w)}$ is of finite type.
\end{proof}
\begin{example}
    The Coxeter graph of affine type $\tilde{A}_n$ is a cycle for $n \geq 2$.  Hence if $w \in W(\tilde{A}_n)$ is fully supported, then $X(w,\tilde{A}_n)$ is
    not isomorphic to any Schubert variety of finite type. The same is true
    for affine types $\tilde{D}_n$ and $\tilde{E}_n$.
\end{example}

\begin{example}
    If $A_{S(w)}$ is not simply-laced, then it's possible for $X(w,A_{S(w)})$
    to be isomorphic to a finite-type Schubert variety, even if $A_{S(w)}$
    is not of finite type. For instance, consider the Cartan matrices
    \begin{equation*}
        \tilde{A}_1 = \begin{pmatrix} 2 & -2 \\ -2 & 2 \end{pmatrix}
        \text{ and } C_2 = \begin{pmatrix} 2 & -2 \\ -1 & 2 \end{pmatrix}
    \end{equation*}
    of affine type $\tilde{A}_1$ and finite type $C_2$ over index set $\{s_1,s_2\}$.
    Then $X(s_1 s_2,\tilde{A}_1) \iso X(s_1 s_2, C_2)$.
\end{example}

\begin{example}
    The criterion in Definition \ref{D:Cartan_equiv} simplifies if $A$ is
    simply-laced, since $A_{st} = A_{ts}$ for all $s,t$. More generally, suppose
    that $A$ and $A'$ are symmetric Cartan matrices, in the sense that $A_{st}
    = A_{ts}$ for all $s, t \in S$,\footnote{Symmetric is not the same as
symmetrizable, a common condition imposed on Cartan matrices when studying the
representation theory of Kac-Moody Lie algebras.}
and let $w \in W(A)$, $w' \in W(A')$.  If $X(w,A) \iso X(w',A')$, then
    the Cartan matrices  $A_{S(w)}$ and $A_{S(w')}'$ are the same up to
    permutation of rows and columns. In other words, if $X(w,A)$
    and $X(w',A')$ are
    isomorphic, then the flag varieties $\mcX(A_{S(w)})$ and $\mcX(A_{S(w')})$
    are isomorphic, with an isomorphism that identifies $X(w,A)$ and $X(w',A')$.
\end{example}

We can use Lemma \ref{L:simple} for comparison to other classes of Schubert varieties
as well.
\begin{example}
    The finite type $A$ Schubert varieties are the best studied class of
    Schubert varieties.  By Lemma \ref{L:simple}, a Schubert variety $X(w,A)$
    is isomorphic to a Schubert variety of finite type $A$ if and only if
    \begin{enumerate}
    \item the simple Coxeter graph $\Gamma(A_{S(w)})$ is a disjoint union of paths, and
    \item for every $st\leq w$, we have $A_{st}\in\{0,-1\}$.
    \end{enumerate}
\end{example}
We can also use Theorem \ref{T:iso} to calculate the isomorphism classes of Schubert
varieties in a fixed Kac-Moody flag variety. For any Cartan matrix $A$ and $w
\in W(A)$, let $$\Isom(w,A):=\{w'\in W(A) : X(w,A)\iso X(w',A)\}$$ denote the
isomorphism class of $X(w,A)$ within the Kac-Moody flag variety $\mcX(A).$
\begin{example}\label{Ex:Isom_2}
    The flag variety $\mcX(A_3)$ (where $A_3$ is the Cartan matrix over
    $S=\{s_1,s_2,s_3\}$ as in Example \ref{Ex:Isom_1}) has $14$ isomorphism classes
    of Schubert varieties:
$$\begin{tabular}{|c|c|}
\hline
$\ell(w)$ & $\Isom(w,A_3)$\\ \hline
$0$ & $\{1\}$\\ \hline
$1$ & $ \{s_1,\, s_2,\, s_3\}$\\ \hline
$2$ & $\{s_1s_3\},\quad \{s_1s_2,\, s_2s_1,\, s_2s_3,\, s_3s_2\}$\\ \hline
$3$ & $\{s_1s_2s_1,\, s_2s_3s_2\},\quad \{s_1s_3s_2\},\ \{s_2s_1s_3\},\quad \{s_1s_2s_3,\, s_3s_2s_1\} $ \\ \hline
$4$ & $\{s_1s_2s_3s_2,\, s_3s_2s_1s_2\},\quad \{s_2s_1s_2s_3,\, s_2s_3s_2s_1\},\quad \{s_2s_1s_3s_2\}$\\ \hline
$5$ & $\{s_2s_1s_2s_3s_2,\, s_2s_3s_2s_1s_2\},\quad \{s_3s_2s_1s_2s_3\}$\\ \hline
$6$ & $\{s_3s_2s_1s_3s_2s_3\}$\\ \hline
\end{tabular}$$
In types $A_4$ and $A_5$ there are $54$ and $316$ Schubert isomorphism classes respectively.
\end{example}
When $w \in W(A)$ is not fully supported, it's possible for $X(w,A)$ to be
isomorphic to $X(w',A)$ where $S(w) \neq S(w')$. For instance, $X(s, A) \iso
X(t,A)$ for all $s,t \in S$.  So in Example \ref{Ex:Isom_2}, $|\Isom(s_1,A_3)|
= 3$, and in general $|\Isom(s,A)| = |S|$.  However, in Example \ref{Ex:Isom_2}
all the fully supported elements $w$ have isomorphism classes of size $1$ or
$2$. In fact, the sets $\Isom(w,A)$ when $A$ is of finite and affine type and
$w$ is fully supported are surprisingly small. To explain this, we can bound
the size of the sets $\Isom(w,A)$ in terms of the automorphism groups of $\Gamma(A)$.
Recall that a \textbf{diagram automorphism} of a Cartan matrix $A$ with index set $S$
is a bijection $\sigma:S\arr S$ such that $A_{st} = A_{\sigma(s) \sigma(t)}$.
We let $\Aut(A)$ denote the group of diagram automorphisms of $A$.  Diagram
automorphisms play an important role in the classification of automorphisms of
Kac-Moody groups \cite{Caprace-Muhlherr05,Carter-Chen93,Kac-Wang92}. We let
$\Aut(\Gamma(A))$ denote the graph automorphism group of $\Gamma(A)$, i.e. the
set of bijections $\sigma : S \arr S$ such that $A_{st} = 0$ if and only if
$A_{\sigma(s)\sigma(t)} = 0$. Since $\Gamma(A)$ is the Dynkin diagram of the
Cartan matrix where every off-diagonal non-zero entry of $A$ is changed to $-1$,
$\Aut(\Gamma(A))$ is also the automorphism group of some Cartan matrix.

\begin{corollary}\label{C:Isom_class_Dynkin_autos}
    If $A$ is a Cartan matrix over $S$ and $w\in W(A)$ is fully supported, then
    \begin{equation*}
        |\Isom(w,A)| \leq |\Aut(\Gamma(A))|.
    \end{equation*}
    Furthermore, if $A$ is symmetric, then
    \begin{equation*}
        |\Isom(w,A)| \leq |\Aut(A)|.
    \end{equation*}
\end{corollary}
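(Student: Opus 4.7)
The plan is to apply Theorem \ref{T:iso} to recast $\Isom(w,A)$ in terms of Cartan equivalences and then construct an injection $f : \Isom(w,A) \hookrightarrow \Aut(\Gamma(A))$, refined to $\Aut(A)$ when $A$ is symmetric.

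By Theorem \ref{T:iso}, each $w' \in \Isom(w,A)$ admits a Cartan equivalence $\sigma : S(w) \arr S(w')$. Since $w$ is fully supported, $S(w) = S$, and Lemma \ref{L:simple} gives $\Gamma(A) \iso \Gamma(A_{S(w')})$, which forces $S(w') = S$ and makes $\sigma$ a permutation of $S$. The proof of Lemma \ref{L:simple} further shows $\sigma \in \Aut(\Gamma(A))$. For each $w'$ I fix a choice $\sigma_{w'}$ of such a Cartan equivalence, and define $f(w') := \sigma_{w'}$.

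To establish injectivity of $f$, I use the forthcoming Corollary \ref{C:equivalence} (Cartan equivalence is an equivalence relation). If $f(w_1') = f(w_2') = \sigma$, then by symmetry $(w_1', A)$ is Cartan equivalent to $(w, A)$ via $\sigma^{-1}$, and composing with the Cartan equivalence $(w, A) \sim (w_2', A)$ via $\sigma$ yields, by transitivity, a Cartan equivalence $(w_1', A) \sim (w_2', A)$ via the identity on $S$. Part (a) of Definition \ref{D:Cartan_equiv} then produces a common reduced word for $w_1'$ and $w_2'$, forcing $w_1' = w_2'$. This gives the bound $|\Isom(w,A)| \leq |\Aut(\Gamma(A))|$.

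For the symmetric case, I refine the image of $f$ to lie in $\Aut(A)$. For any distinct $s, t \in S$, since $w$ is fully supported, any reduced word of $w$ contains both $s$ and $t$, so whichever letter appears first shows $st \leq w$ or $ts \leq w$. Combining Definition \ref{D:Cartan_equiv}(b) with the symmetry $A_{uv} = A_{vu}$, in either case we obtain $A_{st} = A_{\sigma(s)\sigma(t)}$ for all $s \neq t \in S$, so $\sigma \in \Aut(A)$. The same injectivity argument then yields $|\Isom(w,A)| \leq |\Aut(A)|$. The main obstacle in this plan is the injectivity of $f$, which hinges on the transitivity and symmetry of Cartan equivalence from Corollary \ref{C:equivalence}; once those are available, the remaining steps are routine.
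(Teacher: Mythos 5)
Your proof is correct and follows essentially the same route as the paper: any Cartan equivalence from $(w,A)$ to $(w',A)$ with $w$ fully supported is an element of $\Aut(\Gamma(A))$ by (the proof of) Lemma \ref{L:simple}, refined to $\Aut(A)$ in the symmetric case by the same $st\leq w$ versus $ts\leq w$ dichotomy the paper uses. The only difference is that you make the injectivity of $w'\mapsto\sigma_{w'}$ explicit via Corollary \ref{C:equivalence}, whereas the paper leaves it implicit (it also follows directly from Lemma \ref{L:cartan}, since $\sigma$ and a reduced word for $w$ determine $w'$); both justifications are valid.
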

\begin{proof}
    If $w \in W(A)$ is fully-supported, then a Cartan equivalence $\sigma : S \arr S$
    between $(w,A)$ and $(w',A)$ is an element of $\Aut(\Gamma(A))$.

    If $A$ is symmetric,
    and $s \neq t \in S$, then either $st \leq w$, in which case $A_{st} = A_{\sigma(s)\sigma(t)}$,
    or $ts \leq w$, in which case $A_{st} = A_{ts} = A_{\sigma(t)\sigma(s)} = A_{\sigma(s)\sigma(t)}$. So $\sigma$ will be in $\Aut(A)$.
\end{proof}
The automorphisms groups of $A$ for $A$ finite and (untwisted) affine types are
well-known, and are shown in Figure \ref{Figure:Dynkin_diagrams}.
As can be verified from the table, if $A$ is finite or affine,
then $\Gamma(A)$ is also finite or affine, so $\Aut(\Gamma(A))$ can be
determined from the table as well. It follows that for all simple finite and
affine types except affine type $\tilde{A}_n$, $|\Isom(w,A)| \leq 24$ for all
fully supported $w$, and for many simple finite and affine types, the bound is
$|\Isom(w,A)| \leq 2$.

\subsection{Outline of paper}
The remainder of this paper focuses on proving Theorem \ref{T:iso}.  In Section
\ref{S:combinatorics}, we review some basic facts on Coxeter groups and apply
them to Cartan equivalences.  In Section \ref{S:KacMoody_algebras} we recall
the definition of the algebraic structure on Schubert varieties using the Kac-Moody
Lie algebra, and prove (1) implies (2). In Section \ref{S:cohomology}, we study
the cohomology ring of Schubert varieties, and prove (2) implies (3) and (3)
implies (1). An explanation of how to construct a presentation of a Schubert
variety from geometric data is given in Subsection \ref{SS:cohomology2}.
 We use \cite{Kumar02} as the primary reference for background material
throughout the paper.

\subsection{Acknowledgements}
The authors thank Dave Anderson, Shrawan Kumar, Kevin Purbhoo, and Alex Yong
for helpful conversations.  WS was supported by NSERC DG 2018-03968

\section{Combinatorics of Cartan equivalence}\label{S:combinatorics}
In this section, we establish some basic combinatorial properties of Cartan
equivalence. As usual, a word over alphabet $S$ is a sequence
$(s_1,\ldots,s_k)$ with $s_i \in S$ for $1 \leq i \leq k$. As mentioned in the
introduction, if $A$ is a Cartan matrix over index set $S$, then
$(s_1,\ldots,s_k)$ is a reduced word if there is no way to write $s_1 \cdots
s_k \in W(A)$ as a product of fewer than $k$ elements of $S$.  For any $w\in
W(A)$, let $\Red(w)$ denote the set of reduced words of $w$. Normally when
working with $W(A)$, a word $(s_1,\ldots,s_k)$ is written as $s_1 \cdots s_k$,
so the expression $s_1 \cdots s_k$ can refer to a word or to an element of $W(A)$ depending
on the context. We use the same convention in this paper, but we'll also use
the sequence notation for words when there is potential for confusion.

We need the following special case of the subword property for Bruhat order.
\begin{lemma}\label{L:ij}
    Let $A$ be a Cartan matrix over a finite set $S$.  Let $w\in W(A)$ and suppose $s,t \in S(w)$. If $A_{st} < 0$ then the following are equivalent:
    \begin{itemize}
        \item $st \leq w$.
        \item The element $s$ appears before $t$ in some reduced word for $w$.
        \item The element $s$ appears before $t$ in any reduced word for $w$.
    \end{itemize}
Otherwise, if $A_{st}=0$, then $st=ts\leq w.$
\end{lemma}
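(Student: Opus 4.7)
The plan is to use the subword characterization of Bruhat order, which states that $u \leq w$ if and only if every reduced word for $w$ contains some reduced word for $u$ as a subword. The key observation is that when $A_{st} < 0$, equivalently $m_{st} \geq 3$, the element $st \in W(A)$ admits a unique reduced word, namely $(s,t)$. Once these two ingredients are in place, all three equivalences drop out quickly.

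First I would dispose of the commuting case $A_{st} = 0$. Then $m_{st} = 2$, so $st = ts$ has length $2$. Since $s, t \in S(w)$, both letters appear in every reduced word for $w$; picking one occurrence of each and ordering by position gives a length-$2$ reduced subword equal to either $st$ or $ts$, which are the same element, so $st = ts \leq w$ by the subword property. Now suppose instead $A_{st} < 0$, so $m_{st}\geq 3$ and $st\neq ts$. Any reduced word for $st$ has length $2$ and must use letters from $S(st)=\{s,t\}$, so it is $(s,t)$ or $(t,s)$; only the first gives $st$, which establishes the uniqueness claim.

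With this in hand the three equivalences are essentially one step each. The implication (iii) $\Rightarrow$ (ii) is immediate because $s \in S(w)$ forces $w \neq e$, so reduced words exist. For (ii) $\Rightarrow$ (i), if some reduced word for $w$ has $s$ at position $i$ and $t$ at a later position $j$, extracting those two positions gives $(s,t)$ as a reduced subword, and the subword property yields $st \leq w$. For (i) $\Rightarrow$ (iii), the subword characterization forces every reduced word for $w$ to contain a reduced word for $st$ as a subword; by uniqueness that subword must be $(s,t)$, so $s$ appears before $t$. There is no real obstacle beyond invoking the strong form of the subword property that quantifies over every reduced word rather than merely some; the hypothesis $A_{st}<0$ is used only to pin down the unique reduced word for $st$.
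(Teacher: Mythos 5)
Your proof is correct and follows the same route as the paper, which simply cites the subword property of Bruhat order (Theorem 2.2.2 of Bj\"orner--Brenti) and calls the lemma a special case; you have merely spelled out the details, with the key observation that $(s,t)$ is the unique reduced word for $st$ when $A_{st}<0$. No gaps.
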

\begin{proof}
    Special case of \cite[Theorem 2.2.2]{Bjorner-Brenti05}.
\end{proof}

The definition of Cartan equivalence ostensibly requires us to find a
reduced expression for $w$ which corresponds to a reduced expression for $w'$.
However, any reduced expression will do:
\begin{lemma}\label{L:cartan}
    Let $A$ and $A'$ be Cartan matrices on $S$ and $S'$ respectively and let $w
    \in W(A)$. Let $(s_{1},\ldots, s_{k})$ be a reduced word for $w \in W(A)$,
    and suppose $\sigma:S(w) \arr S'$ is an injection satisfying the condition
    that $A_{st} = A'_{\sigma(s)\sigma(t)}$ whenever $st \leq w$. Then
    $(\sigma(s_1),\ldots,\sigma(s_k))$ is a reduced word for $w' := \sigma(s_1)
    \cdots \sigma(s_k) \in W(A')$, and $(w,A)$ is Cartan equivalent to $(w',A')$.
    Furthermore, $(t_1,\ldots,t_k) \in \Red(w)$
    if and only if $(\sigma(t_1),\ldots, \sigma(t_k)) \in \Red(w')$.
\end{lemma}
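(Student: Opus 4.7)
My plan is to prove (i) reducedness of the image word, (ii) the Cartan equivalence, and (iii) the bijection of reduced words jointly, by strong induction on $k=\ell(w)$, using Matsumoto's theorem (any two reduced expressions for the same element are connected by braid moves) as the principal tool. The key preliminary observation is that whenever a braid move $sts\cdots=tst\cdots$ of length $m_{st}$ applies to a reduced word for $w$ in $W(A)$, the relevant contiguous alternating subword places both $st$ and $ts$ as Bruhat subwords of a reduced word for $w$, so $st,ts\leq w$ (when $m_{st}=2$ these coincide as elements of $W(A)$, making this automatic). The Cartan hypothesis then gives $A_{st}=A'_{\sigma(s)\sigma(t)}$ and $A_{ts}=A'_{\sigma(t)\sigma(s)}$, forcing $m_{st}=m'_{\sigma(s)\sigma(t)}$, and hence the braid move translates under $\sigma$ to a valid braid move in $W(A')$. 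Moreover, for any pair $s,t\in S(w)$ at least one of $st,ts\leq w$ holds (both letters appear in any reduced word), so the Cartan hypothesis always synchronises the dichotomy $m_{st}=2$ versus $m_{st}\geq 3$ with that for $m'_{\sigma(s)\sigma(t)}$.

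In the inductive step, set $u:=s_1\cdots s_{k-1}$. The Cartan hypothesis for $w$ restricts to one for $u$ (since $\leq u$ implies $\leq w$), so inductively $(\sigma(s_1),\ldots,\sigma(s_{k-1}))\in\Red(u')$ with $u':=\sigma(s_1)\cdots\sigma(s_{k-1})$, and the Bj\"orner--Brenti subword property (together with injectivity of $\sigma$ and the $m_{st}=2$ synchronisation above) yields the pair biconditional $st\leq u \iff \sigma(s)\sigma(t)\leq u'$ for $s,t\in S(u)$. Now suppose for contradiction that $(\sigma(s_1),\ldots,\sigma(s_k))\notin\Red(w')$. The Exchange Condition in $W(A')$ then produces an index $i$ such that $(\sigma(s_1),\ldots,\widehat{\sigma(s_i)},\ldots,\sigma(s_{k-1}),\sigma(s_k))\in\Red(u')$, and by Matsumoto this is joined to $(\sigma(s_1),\ldots,\sigma(s_{k-1}))$ by a chain of braid moves in $W(A')$. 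Each such move involves a pair whose two orderings are both $\leq u'$, so by the biconditional and the Cartan hypothesis it translates back to a valid braid move in $W(A)$ between reduced words of $u$. Chaining the translates shows $(s_1,\ldots,\hat s_i,\ldots,s_{k-1},s_k)\in\Red(u)$, so $s_1\cdots \hat s_i\cdots s_{k-1}=u s_k=w$ in $W(A)$, contradicting $\ell(w)=k$ since the left-hand side has length at most $k-2$.

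Once (i) is known for $w$, statement (ii) is immediate: the matched reduced words $(s_1,\ldots,s_k)$, $(\sigma(s_1),\ldots,\sigma(s_k))$ verify part (a) of Definition~\ref{D:Cartan_equiv}, part (b) is the Cartan hypothesis, and the pair biconditional upgrades from $u$ to $w$ by the same subword argument. The forward direction of (iii) is then immediate from Matsumoto in $W(A)$ and the braid-move translation of paragraph one. The backward direction is the symmetric argument inside $W(A')$: any reduced word for $w'$ is joined to $(\sigma(s_1),\ldots,\sigma(s_k))$ by braid moves in $W(A')$, each involving a pair with both orderings $\leq w'$, which by the biconditional for $w$ and the Cartan hypothesis translate back to braid moves in $W(A)$. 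The principal obstacle to this approach is circularity in the induction, but both the reverse-direction braid translation and the pair biconditional required in the inductive step are applied only to the strictly shorter element $u$, so they are supplied by the inductive hypothesis. The underlying subtlety is that Definition~\ref{D:Cartan_equiv} controls $A_{st}$ only for ordered pairs with $st\leq w$; this suffices precisely because braid moves between reduced words of $w$ only ever activate pairs with both $st,ts\leq w$.
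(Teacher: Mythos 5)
Your proof is correct, and its engine is the same observation the paper relies on: a braid move between reduced words only ever activates a pair $s,t$ with both $st\leq w$ and $ts\leq w$ (or with $m_{st}=2$, where the hypothesis still forces $A_{st}=A'_{\sigma(s)\sigma(t)}=0=A'_{\sigma(t)\sigma(s)}$), so the Cartan hypothesis synchronises $m_{st}$ with $m'_{\sigma(s)\sigma(t)}$ and braid moves transport across $\sigma$ in both directions. Where you genuinely differ is in how reducedness of the image word is extracted from this. The paper applies Tits' word criterion in one step: a word is non-reduced iff some sequence of braid moves produces two equal adjacent letters, and since every word reachable from $(\sigma(s_1),\ldots,\sigma(s_k))$ by braid moves is the letterwise $\sigma$-image of a reduced word for $w$ (hence, by injectivity, never has equal adjacent letters), the image word is reduced; the reduced-word bijection then falls out of the same transport. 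You instead induct on $\ell(w)$, use the Exchange Condition to manufacture a second reduced word for $u'=\sigma(s_1)\cdots\sigma(s_{k-1})$, and pull the Matsumoto chain joining it to $(\sigma(s_1),\ldots,\sigma(s_{k-1}))$ back to $W(A)$ to contradict $\ell(w)=k$. This is heavier: it requires the inductively supplied biconditional $st\leq u\iff\sigma(s)\sigma(t)\leq u'$ (which does follow from the inductive hypothesis together with Lemma \ref{L:ij}) and a careful check against circularity, which you correctly confine to the shorter element $u$. What it buys is that you only ever quote Matsumoto's theorem and the Exchange Condition rather than the full Tits criterion for non-reducedness; what it costs is length and the extra bookkeeping of the biconditional. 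Both arguments are sound, and your analysis of the one real subtlety --- that the hypothesis controls $A_{st}$ only for ordered pairs with $st\leq w$, which suffices because braid moves only activate pairs with both orderings below the element --- matches the paper's exactly.
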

\begin{proof}
    Let $m_{st}$ and $m_{st}'$ be the Coxeter exponents for $A$ and $A'$
    respectively.  A word $(t_1,\ldots,t_k)$ with $t_1,\ldots,t_k \in S'$ is
    non-reduced if and only if it is possible to apply Coxeter relations
    \begin{equation*}
        \underbrace{(s,t, \ldots)}_{\text{$m_{st}'$ times}} =
        \underbrace{(t,s, \ldots)}_{\text{$m_{st}'$ times}}
    \end{equation*}
    for simple reflections $s,t \in S'$, to get a word $(t_1',\ldots,t_k')$
    with $t_{i}' = t_{i+1}'$ for some $1 \leq i < k$. Suppose
    $(\sigma(s_1),\ldots,\sigma(s_k))$ contains an alternating subword
    $(\sigma(s),\sigma(t),\ldots)$ of length $m_{\sigma(s)\sigma(t)}'$. If $m_{\sigma(s)\sigma(t)}' \geq 3$,
    then $st \leq w$ and $ts \leq w$, and so $A_{st} = A_{\sigma(s)\sigma(t)}'$
    and $A_{ts} = A_{\sigma(t)\sigma(s)}'$. If $m_{\sigma(s)\sigma(t)}' = 2$, then
    $st \leq w$, and hence $A_{st} = A_{\sigma(s)\sigma(t)}' = 0$, and
    so $A_{ts} = 0 = A_{\sigma(t)\sigma(s)}'$ as well. Thus in either case,
    $m_{st} = m_{\sigma(s)\sigma(t)}'$, and so any Coxeter relation that
    can be applied to $(\sigma(s_1),\ldots,\sigma(s_k))$ can also be
    applied to $(s_1,\ldots,s_k)$, giving another reduced word for $w$.
    Since $(s_1,\ldots,s_k)$ is reduced, we'll never get a word of the
    form $(s_1',\ldots,s_k')$ with $s_{i}' = s_{i+1}'$ by applying Coxeter
    relations. So the same is true of $(\sigma(s_1),\ldots,\sigma(s_k))$, and
    thus $(\sigma(s_1),\ldots,\sigma(s_k))$ is reduced as well.
    The fact that $(w,A)$ and $(w',A')$ are Cartan equivalent follows immediately.

    Similarly, if $(s_1,\ldots,s_k)$ contains an alternating
    subword $(s,t,\ldots)$ of length $m_{st}$, then the same argument shows that
    $m_{st} = m_{\sigma(s)\sigma(t)}'$, and hence any Coxeter relation that can be applied
    to $(s_1,\ldots,s_k)$ can also be applied to
    $(\sigma(s_1),\ldots,\sigma(s_k))$. Since $\Red(w)$ and $\Red(w')$ are
    exactly the words we get by applying Coxeter relations to
    $(s_1,\ldots,s_k)$ and $(\sigma(s_1),\ldots,\sigma(s_k))$, it follows that
    $(t_1,\ldots,t_k) \in \Red(w)$ if and only if $(\sigma(t_1),\ldots,\sigma(t_k))\in \Red(w')$.
\end{proof}

\begin{cor}\label{C:equivalence}
    Cartan equivalence is an equivalence relation.
\end{cor}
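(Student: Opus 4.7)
The plan is to check the three axioms directly, using Lemma \ref{L:ij} and Lemma \ref{L:cartan} as the main tools. Reflexivity is immediate: take $\sigma = \mathrm{id}_{S(w)}$, and note that every reduced word for $w$ corresponds to itself, so both conditions of Definition \ref{D:Cartan_equiv} hold trivially.

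For symmetry, suppose $(w,A)$ and $(w',A')$ are Cartan equivalent via a bijection $\sigma : S(w) \arr S(w')$. I will show that $\sigma^{-1} : S(w') \arr S(w)$ is also a Cartan equivalence. Condition (a) follows immediately from Lemma \ref{L:cartan}: applying the lemma to $\sigma$ shows that reduced words for $w$ and $w'$ are in bijection via $\sigma$, so any reduced word for $w'$ has the form $(\sigma(s_1), \ldots, \sigma(s_k))$ for some reduced word $(s_1, \ldots, s_k)$ for $w$. For condition (b), suppose $s't' \leq w'$ with $s' \neq t' \in S(w')$, and write $s = \sigma^{-1}(s')$, $t = \sigma^{-1}(t')$. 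I need to show $st \leq w$, from which condition (b) for $\sigma$ immediately yields $A_{st} = A'_{s't'}$. If $A'_{s't'} = 0$, then by Lemma \ref{L:ij} both $s't' \leq w'$ and $t's' \leq w'$, and repeating the argument below in both directions gives $st \leq w$ and $ts \leq w$, with matching Cartan entries. If $A'_{s't'} \neq 0$, then by Lemma \ref{L:ij}, $s'$ appears before $t'$ in every reduced word for $w'$. Pulling this reduced word back through $\sigma^{-1}$ using Lemma \ref{L:cartan} gives a reduced word for $w$ in which $s$ appears before $t$; by Lemma \ref{L:ij} again (using that $A_{st} = 0$ would force $A'_{s't'} = 0$, contrary to assumption), this implies $st \leq w$.

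For transitivity, suppose $(w,A) \sim (w',A')$ via $\sigma : S(w) \arr S(w')$ and $(w',A') \sim (w'',A'')$ via $\tau : S(w') \arr S(w'')$. I claim $\tau \circ \sigma$ is a Cartan equivalence between $(w,A)$ and $(w'',A'')$. Condition (a) is straightforward: applying $\sigma$ to a reduced word of $w$ gives a reduced word of $w'$, and then applying $\tau$ gives a reduced word of $w''$. For condition (b), assume $st \leq w$ with $s \neq t \in S(w)$. I need to know that $\sigma(s)\sigma(t) \leq w'$, for then condition (b) applied successively to $\sigma$ and $\tau$ gives $A_{st} = A'_{\sigma(s)\sigma(t)} = A''_{\tau\sigma(s)\tau\sigma(t)}$. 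If $A_{st} = 0$, then condition (b) for $\sigma$ gives $A'_{\sigma(s)\sigma(t)} = 0$, so $\sigma(s)\sigma(t) \leq w'$ automatically by Lemma \ref{L:ij}. Otherwise, $A_{st} < 0$, and by Lemma \ref{L:ij} there is a reduced word for $w$ with $s$ before $t$; pushing through $\sigma$ via Lemma \ref{L:cartan} gives a reduced word for $w'$ with $\sigma(s)$ before $\sigma(t)$, and since $A'_{\sigma(s)\sigma(t)} = A_{st} < 0$, Lemma \ref{L:ij} yields $\sigma(s)\sigma(t) \leq w'$.

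The main subtlety—and the only real content of the argument—is the compatibility between the Bruhat-order condition $st \leq w$ (which is asymmetric in $s$ and $t$ when $A_{st} \neq 0$) and the bijection $\sigma$. The combination of Lemma \ref{L:ij} (translating Bruhat comparisons into statements about positions in reduced words) and Lemma \ref{L:cartan} (translating reduced words across $\sigma$) is precisely what makes the argument go through in both the inversion and composition steps.
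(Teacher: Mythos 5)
Your proof is correct and follows essentially the same route as the paper: reflexivity is trivial, symmetry is obtained by translating $s't' \leq w'$ into statements about positions in reduced words via Lemma \ref{L:ij} and pulling these back through $\sigma^{-1}$ via Lemma \ref{L:cartan}, and transitivity composes the two bijections after verifying that $st \leq w$ implies $\sigma(s)\sigma(t) \leq w'$ (a step you actually justify more carefully than the paper does). The one loose spot is the $A'_{s't'}=0$ case of symmetry, where ``repeating the argument below'' does not literally apply since that argument opens by invoking the $A'_{s't'}<0$ clause of Lemma \ref{L:ij}; the paper instead observes that at least one of $st \leq w$ or $ts \leq w$ must hold, applies condition (b) to whichever does, and uses the Cartan axiom $A_{ts}=0 \Leftrightarrow A_{st}=0$ to conclude $A_{st}=0$, whence $st=ts\leq w$ with matching entries.
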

\begin{proof}
    Clearly Cartan equivalence is reflexive. Suppose that $\sigma : S(w) \arr S(w')$
    is a Cartan equivalence as in Definition \ref{D:Cartan_equiv}. By definition,
    there is a reduced word $s_1 \cdots s_n$ for $w$ such that $\sigma(s_1) \cdots
    \sigma(s_n)$ is a reduced word for $w'$. Suppose $s't' \leq w'$, where $s'
    = \sigma(s)$ and $t' = \sigma(t)$. By Lemma \ref{L:ij}, if $A_{s't'}' \neq
    0$, then $s'$ occurs before $t'$ in $\sigma(s_1) \cdots \sigma(s_n)$. So
    $st \leq w$, and hence $A'_{s't'} = A_{st} = A_{\sigma^{-1}(s')\sigma^{-1}(t')}$.
    If $A_{s't'}' =0$, then at least one of $st$ or $ts \leq w$. In both
    cases $A_{\sigma^{-1}(s') \sigma^{-1}(t')} = A_{st} = 0 = A_{s't'}'$, since
    if $ts \leq w$, then $A_{ts} = A'_{t's'} = 0$, implying $A_{st} = 0$. So
    $\sigma^{-1}$ is also a Cartan equivalence, and Cartan equivalence is symmetric.

    For transitivity, suppose $\sigma : S(w) \arr S(w')$ and $\tau : S(w') \arr S(w'')$
    are Cartan equivalences from $(w,A)$ to $(w',A')$ and $(w',A')$ to $(w'',A'')$
    respectively. Let $s_1,\ldots,s_n$ be a reduced word for $w$. By Lemma \ref{L:cartan},
    $\sigma(s_1)\cdots\sigma(s_n)$ is a reduced word for $w'$, and therefore
    $\tau(\sigma(s_1))\cdots \tau(\sigma(s_n))$ is a reduced word for $w''$.
    If $st \leq w$, then $\sigma(s)\sigma(t) \leq w'$. So $A_{st} =
    A_{\sigma(s)\sigma(t)}' = A_{\tau(\sigma(s)),\tau(\sigma(t))}''$ for all
    $st \leq w$, and hence $\tau \circ \sigma : S(w) \arr S(w'')$ is a Cartan
    equivalence between $(w,A)$ and $(w'',A'')$.
\end{proof}

Another corollary of Lemma \ref{L:cartan} is that Cartan equivalences preserve
Bruhat intervals:
\begin{cor}\label{C:cartan}
    Suppose $(w,A)$ and $(w',A')$ are Cartan equivalent under the bijection
    $\sigma:S(w)\arr S(w')$. For any $v \leq w$, there is $v' \leq w'$ such that
    $(v,A)$ and $(v',A')$ are Cartan equivalent under an induced bijection
    $\sigma|_{S(v)} : S(v) \arr \sigma(S(v))$.   Furthermore, this correspondence
    gives a poset isomorphism $\widetilde{\sigma}: [e,w] \arr [e,w']$.
\end{cor}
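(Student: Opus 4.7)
The plan is to use Lemma \ref{L:cartan} together with the subword property of Bruhat order to transport reduced expressions from $[e,w]$ to $[e,w']$ through $\sigma$, and then verify that this assignment is a well-defined order-preserving bijection.

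Given $v \leq w$, the subword property of Bruhat order produces a reduced word $(s_1, \ldots, s_k)$ for $w$ containing a subword $(s_{i_1}, \ldots, s_{i_\ell})$ that is reduced for $v$. For any $s, t \in S(v)$ with $st \leq v$, we have $st \leq w$, so $A_{st} = A'_{\sigma(s)\sigma(t)}$. Hence $\sigma|_{S(v)}$ satisfies the hypotheses of Lemma \ref{L:cartan} applied to $v$, and the lemma produces an element $v' := \sigma(s_{i_1}) \cdots \sigma(s_{i_\ell}) \in W(A')$ such that $(\sigma(s_{i_1}), \ldots, \sigma(s_{i_\ell}))$ is a reduced word for $v'$ and $(v, A)$ is Cartan equivalent to $(v', A')$ under $\sigma|_{S(v)}$. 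Since this expression is a subword of the reduced word $(\sigma(s_1), \ldots, \sigma(s_k))$ for $w'$ furnished by Lemma \ref{L:cartan} applied to $w$, the subword property yields $v' \leq w'$. Note that $S(v')$ is automatically $\sigma(S(v))$, so $\sigma|_{S(v)}$ is the desired bijection onto its image.

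Next I would show that $v'$ depends only on $v$, not on the chosen reduced word. If $(t_1, \ldots, t_\ell)$ is any other reduced word for $v$, the last clause of Lemma \ref{L:cartan} applied to $\sigma|_{S(v)}$ gives $(\sigma(t_1), \ldots, \sigma(t_\ell)) \in \Red(v')$ for the same $v'$. Setting $\widetilde{\sigma}(v) := v'$ thus produces a well-defined map $\widetilde{\sigma}: [e,w] \arr [e,w']$.

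Finally, for the poset isomorphism claim: if $u \leq v \leq w$, choosing a reduced word for $v$ containing a reduced subword for $u$ and applying $\sigma$ exhibits $\widetilde{\sigma}(u)$ as a subword of a reduced word for $\widetilde{\sigma}(v)$, so $\widetilde{\sigma}(u) \leq \widetilde{\sigma}(v)$. For bijectivity, Corollary \ref{C:equivalence} says $\sigma^{-1}: S(w') \arr S(w)$ is a Cartan equivalence from $(w',A')$ to $(w,A)$, so the same construction yields $\widetilde{\sigma^{-1}}: [e,w'] \arr [e,w]$; since applying $\sigma$ and then $\sigma^{-1}$ to any reduced word recovers the original, these maps are mutually inverse. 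The only subtle point is well-definedness of $\widetilde{\sigma}$, and this reduces directly to the last clause of Lemma \ref{L:cartan}.
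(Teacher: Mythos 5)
Your proof is correct and follows essentially the same route as the paper: transport a reduced subword of $w$ through $\sigma$ via Lemma \ref{L:cartan}, use the last clause of that lemma for well-definedness, iterate the subword property for order-preservation, and invoke the inverse equivalence $\sigma^{-1}$ for bijectivity. Your write-up is somewhat more explicit about verifying the hypotheses of Lemma \ref{L:cartan} for $\sigma|_{S(v)}$, but the argument is the same.
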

\begin{proof}
    Fix a reduced word $w=s_1\cdots s_k$ and let $v\leq w$.  Then there is a
    subsequence $(i_1,\ldots,i_m)$ for which $v=s_{i_1}\cdots s_{i_m}$ is a reduced
    word for $v$.  Let $v':=\sigma(s_{i_1})\cdots\sigma(s_{i_m})$.  It follows from
    Lemma \ref{L:cartan} that $(v,A)$ is Cartan equivalent to $(v',A')$ and $v'\leq
    w'$. Also by Lemma 2.2, the element $v'$ is independent of the choice of reduced
    word for $v$. So $\sigma$ induces a function $\widetilde{\sigma} : [e,w] \arr
    [e,w']$. If $u \leq v$, then there is another subsequence $(j_1,\ldots,j_k)$
    such that $u = s_{i_{j_1}} \cdots s_{i_{j_k}}$ is a reduced word for $u$, and
    hence $\widetilde{\sigma}(u) = \sigma(s_{i_{j_1}}) \cdots \sigma(s_{i_{j_k}})
    \leq \widetilde{\sigma}(v)$, so $\widetilde{\sigma}$ is order-preserving.
    The function $\widetilde{\sigma^{-1}} : [e,w'] \arr [e,w]$ induced by
    $\sigma^{-1}$ is an inverse to $\widetilde{\sigma}$, so
    $\widetilde{\sigma}$ is a bijection.
\end{proof}

\section{Cartan equivalence and Kac-Moody Lie algebras}\label{S:KacMoody_algebras}
In this section we prove that condition (1) implies condition (2) from Theorem
\ref{T:iso}, which we state as its own proposition:
\begin{prop}\label{P:cartantoalg}
    If $(w,A)$ and $(w',A')$ are Cartan equivalent, then $X(w,A) \iso X(w',A')$ as algebraic varieties.
\end{prop}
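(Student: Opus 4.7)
The plan is to construct the desired isomorphism via a common Bott-Samelson resolution. By Definition \ref{D:Cartan_equiv} together with Lemma \ref{L:cartan}, we may fix a reduced word $\mathbf{w} = (s_1, \ldots, s_k)$ for $w$ such that $\mathbf{w}' := (\sigma(s_1), \ldots, \sigma(s_k))$ is a reduced word for $w'$. Let $Z_{\mathbf{w}}$ and $Z_{\mathbf{w}'}$ denote the associated Bott-Samelson varieties (as in \cite{Kumar02}), equipped with their standard birational surjections $\mu : Z_{\mathbf{w}} \arr X(w,A)$ and $\mu' : Z_{\mathbf{w}'} \arr X(w',A')$.

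The first step is to construct an isomorphism $\varphi : Z_{\mathbf{w}} \iso Z_{\mathbf{w}'}$. Recall that $Z_{\mathbf{w}}$ is built as an iterated $\mbP^1$-bundle $Z_{(s_1,\ldots,s_i)} \arr Z_{(s_1,\ldots,s_{i-1})}$, with the bundle structure at stage $i$ encoded by a character of $\mcB$ together with the rank-two Chevalley commutator data coming from $\mfg(A_{S(w)})$. As Example \ref{Ex:hirzebruch} already suggests, only the Cartan integers $A_{s_j s_i}$ with $s_j s_i \leq w$ --- equivalently, by Lemma \ref{L:ij}, with $s_j$ appearing before $s_i$ in some reduced word for $w$ --- actually enter the resulting isomorphism class. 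These are precisely the entries preserved by $\sigma$, so the parallel construction for $\mathbf{w}'$ produces an isomorphic iterated $\mbP^1$-bundle, yielding $\varphi$.

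The second step is to show that $\varphi$ descends to a morphism $X(w,A) \arr X(w',A')$. Since $\mu$ is a proper surjection from a smooth variety with connected fibers onto the normal variety $X(w,A)$, it suffices to check that $\mu$ and $\mu' \circ \varphi$ induce the same equivalence relation on $Z_{\mathbf{w}}$. The fibers of $\mu$ admit a combinatorial description in terms of the Bruhat interval $[e,w]$ and subwords of $\mathbf{w}$, and Corollary \ref{C:cartan} provides a poset isomorphism $[e,w] \iso [e,w']$ compatible with the identification of subwords under $\sigma$, from which the descent follows. Applying the same argument to $\varphi^{-1}$ gives an inverse morphism and hence an isomorphism of varieties.

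The main technical obstacle will be making precise the claim that, up to isomorphism, the construction of $Z_{\mathbf{w}}$ sees only those Cartan integers $A_{st}$ with $st \leq w$. The naive quotient $P_{s_1} \times_{\mcB} \cdots \times_{\mcB} P_{s_k}/\mcB$ uses the entire Borel and hence in principle the whole of $A_{S(w)}$, so the crux is an inductive analysis up the tower showing that at each stage the transition functions of the new $\mbP^1$-bundle involve only commutator relations $[x_{\alpha_{s_j}}, x_{-\alpha_{s_i}}]$ for $j < i$, which are governed exactly by the $A_{s_j s_i}$ with $s_j s_i \leq w$. Executing this carefully, in a way that simultaneously renders $\varphi$ compatible with $\mu$ and $\mu'$, is where the real work lies.
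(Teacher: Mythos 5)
Your outline takes a genuinely different route from the paper (which never mentions Bott--Samelson varieties), but as written it has a real gap: the two claims on which everything rests are asserted rather than proved, and you say so yourself. The first is that the isomorphism type of the tower $Z_{\mathbf{w}}$, \emph{together with the map} $\mu$, depends only on the Cartan integers $A_{s_js_i}$ with $s_js_i\leq w$. The naive presentation $P_{s_1}\times_{\mcB}\cdots\times_{\mcB}P_{s_k}/\mcB$ involves the full Borel, and when one writes out affine charts the transition functions require commuting expressions like $\exp(t e_{s_j})$ past $\exp(u f_{s_i})\dot{s}_i$, i.e.\ normal-ordering computations in the Kac--Moody group. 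Showing that the only Cartan entries surviving these computations are those with $s_js_i\leq w$ is exactly the hard analytic content of the proposition, and it is not supplied by Example \ref{Ex:hirzebruch} or Lemma \ref{L:ij}. The paper confronts precisely this issue, but in a different package: it embeds $X(w,A)$ into $\mbP(V)$ for $V=L^{\max}(\lambda,A)$, builds a $\mfn^-$-module surjection $\pi:V\arr V'$ (Lemma \ref{L:quotient}), and then proves via a normal-form calculus in a free algebra over $\C[a_{st}]$ with \emph{indeterminate} Cartan entries (Lemmas \ref{L:normal} and \ref{L:equivariant}) that the coordinates of every point $\exp(z)\,v\cdot\omega$ of $X(w,A)$ involve only the $a_{st}$ with $st\leq w$. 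Any execution of your plan would need an argument of comparable precision; the claim cannot be waved through by saying "only rank-two Chevalley data enters."

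The descent step also needs more than you give it. Granting an abstract isomorphism $Z_{\mathbf{w}}\iso Z_{\mathbf{w}'}$, you need the \emph{specific} map $\varphi$ to carry each fiber of $\mu$ onto a fiber of $\mu'$ as a subvariety, not merely to induce a bijection of combinatorial indexing sets. The fibers of $\mu$ over a point of the cell $\mcB v\mcB/\mcB$ are positive-dimensional subvarieties of $Z_{\mathbf{w}}$ in general, and Corollary \ref{C:cartan} only matches the Bruhat posets $[e,w]$ and $[e,w']$; it says nothing about where $\varphi$ sends a given fiber. This compatibility would have to be built into the construction of $\varphi$ (e.g.\ by making $\varphi$ equivariant for suitable torus or unipotent actions), which is again the unexecuted work. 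A further point worth noting: the paper's module-theoretic map $\pi$ only exists in one direction (from the Cartan matrix with more negative entries to the one with less negative entries), forcing the reduction to an auxiliary matrix $\tilde{A}$ with $\tilde{A}_{st}=\min(A_{st},A'_{st})$ at the end of its proof; if you can genuinely produce a symmetric isomorphism of Bott--Samelson towers, your approach would avoid that detour, which would be a modest structural advantage --- but only once the two gaps above are closed.
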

The general idea of the proof of Proposition \ref{P:cartantoalg} is to use the Cartan equivalence between $(w,A)$ and $(w',A')$ to construct a linear map $\pi:V\arr V'$ between certain vector spaces $V$ and $V'$ for which the algebraic structures of $X(w,A)$ and $X(w',A')$ are realized as closed embeddings into $\mbP(V)$ and $\mbP(V')$ respectively.  Restricting $\pi$ to $X(w,A)$ will then give an algebraic bijection between $X(w,A)$ and $X(w',A')$.  Since these varieties are normal, they will be isomorphic.  We begin by recalling the construction of a Kac-Moody Lie algebra $\mfg(A)$ from a Cartan matrix $A$ over a finite set $S$ of size $n$ as given in \cite{Kumar02} or \cite{Kac90}.
Let $\mfh(A)$ be a complex vector space of dimension $2n-\rank(A)$.  Let $\{h_s\}_{s\in S}\subset \mfh(A)$ denote a set of linearly independent vectors with corresponding simple root vectors $\{\alpha_s\}_{s\in S}\subset \mfh^*(A)$ satisfying
$$\alpha_t(h_s)=A_{st}.$$
The \textbf{Kac-Moody Lie algebra} $\mfg(A)$ is the Lie algebra
generated by $\mfh$, along with elements $\{e_s\}_{s\in S}$, $\{f_s\}_{s\in S}$,
satisfying Lie bracket relations:
\begin{enumerate}[(1)]
    \item $[\mfh,\mfh]=0$,
    \item $[h,e_s] = \alpha_s(h)\, e_s$ and $[h,f_s] = -\alpha_s(h)\, f_s$ for all $h \in \mfh(A)$ and $s\in S$,
    \item $[e_s,f_t] = \delta_{st}\, h_s$,
    \item $\ad(e_s)^{1-A_{st}}(e_t) = 0$ for all $s\neq t\in S$, and
    \item $\ad(f_s)^{1-A_{st}}(f_t) = 0$ for all $s\neq t\in S$.
\end{enumerate}
The algebra $\mfg(A)$ has a triangular decomposition $$\mfg(A) = \mfn^-(A) \oplus \mfh(A)
\oplus \mfn^+(A),$$ where $\mfn^{\pm}(A)$ are the subalgebras generated by $\{e_s\}_{s\in S}$ and $\{f_s\}_{s\in S}$ respectively. More specifically, the algebra $\mfn^{+}(A)$ is the
free Lie algebra generated by $\{e_s\}_{s\in S}$ satisfying the relations in (4) above.  Similarly, $\mfn^-(A)$ is freely generated by the $\{f_s\}_{s\in S}$ subject to the relations in (5).

In the proof of Proposition \ref{P:cartantoalg}, we want to be able to work with Kac-Moody Lie algebras without specifying a Cartan matrix.  To do this, consider the set the variables $\{a_{st}\}_{(s,t)\in S^2}$ and let $R_0 = \C[a_{st} : (s,t)\in S^2]$ denote the polynomial ring generated by these variables.  Let $\mcR$ be the free (associative non-commutative) $R_0$-algebra generated by symbols $\tilde{f}_s$, $\tilde{h}_s$, and
$\tilde{e}_s$, for $s\in S$. We use $\tilde{f}^I$ to denote
an arbitrary non-commutative monomial in the variables $\{\tilde{f}_s\}_{s\in S}$, $y^I$ to denote a
non-commutative monomial in the variables $\{\tilde{h}_s\}_{s\in S}$ and $\{\tilde{e}_s\}_{s\in S}$, and $x^I$
to denote a non-commutative monomial in all three families of variables.  A
general element of $\mcR$ can then be written as
\begin{equation*}
    \sum_I g_I(a)\, x^I,
\end{equation*}
where each $g_I(a)$ is an element of $R_0$, and $g_I(a)=0$ for all but finitely many $I$.  We say an element of $\mcR$ is \textbf{independent of $a_{st}$} if each
monomial in the coefficients $g_I(a)$ do not contain $a_{st}$ as a factor.  We say that an
element of $\mcR$ is a \textbf{normal form} if it can be written as
\begin{equation*}
    \sum_{I,J} g_{IJ}(a) \tilde{f}^I y^J.
\end{equation*}
In other words, every monomial is ordered so that all $\tilde{f}_s$'s precede
all $\tilde{h}_s$'s and $\tilde{e}_s$'s.  Given a Cartan matrix $A=[A_{st}]_{(s,t)\in S^2}$ over $S$, there is a morphism $\phi(A) : \mcR
\arr \mcU(\mfg(A))$ which sends
\begin{equation*}
    (\tilde{f}_s,\tilde{h}_s,\tilde{e}_s) \mapsto (f_s,h_s,e_s)\quad  \text{and}\quad
        a_{st} \mapsto A_{st} \text{ for all } (s,t)\in S^2.
\end{equation*}
Note that $a_{ss}$ is a variable in $R_0$. However, $\phi(A)(a_{ss})=2$ for all Cartan matrices $A.$  Hence the reader can replace $a_{ss}$ with 2 in all the proceeding calculations.

Let $\mfg'(A)$ denote the commutator subalgebra of $\mfg(A)$.  The morphism $\phi(A)$ maps $\mcR$ surjectively onto $\mcU(\mfg'(A))$, the universal
enveloping algebra of $\mfg'(A)$, since it is generated by $\{f_s, h_s, e_s \ |\ s\in S\}$. Given an element
$\tau \in \mcR$, we say that $\upsilon \in \mcR$ is \textbf{a normal form of}
$\tau$ if:
\begin{enumerate}
\item[(a)] $\upsilon$ is a normal form, and
\item[(b)] $\phi(A)(\upsilon) =\phi(A)(\tau)$ for all Cartan matrices $A$ over $S^2$.
\end{enumerate}

Using the relations (2) and (3) from the definition of $\mfg(A)$, it is clear that every element of $\mcR$ has a normal
form.\footnote{This is analogous to the isomorphism $\mcU(\mfg(A)) \iso
\mcU(\mfn^-(A)) \otimes \mcU(\mfh(A) \oplus \mfn^+(A))$. Working with $\mcR$
allows us to use this isomorphism without specifying the Cartan matrix $A$.}
Given  $\tau\in \mcR$, we construct a specific normal form $\eta(\tau)$
as follows:
\begin{itemize}
    \item if $\tilde{e}_s \tilde{f}_t$ or $\tilde{h}_s \tilde{f}_t$
        does not occur in any monomial of $\tau$, then set $\eta(\tau)=\tau$.
    \item Otherwise find the rightmost occurrence of $\tilde{e}_s \tilde{f}_t$
        or $\tilde{h}_s \tilde{f}_t$ in each monomial of $\tau$, and either
        \begin{itemize}
            \item replace $\tilde{e}_s \tilde{f}_t$ with $\tilde{f}_t \tilde{e}_s+ \delta_{st} \tilde{h}_s $, or
            \item replace $\tilde{h}_s \tilde{f}_t$ with $\tilde{f}_t \tilde{h}_s-a_{st} \tilde{f}_t$.
        \end{itemize}
    \item Repeat for each monomial in the resulting sum.
\end{itemize}
We call $\eta(\tau)$ \textbf{\emph{the} normal form} of $\tau$.

\begin{example}\label{Ex:normal_form}
Let $S=\{s_1,s_2,s_3\}$ and $\tau=\tilde h_1\tilde e_2\tilde e_3\tilde f_2$.  Applying the algorithm for $\eta(\tau)$ yields:
\begin{align*}
\tau=\tilde h_1\tilde e_2(\tilde e_3\tilde f_2) &\arr \tilde h_1\tilde e_2(\tilde f_2\tilde e_3)=\tilde h_1(\tilde e_2\tilde f_2)\tilde e_3\\
 & \arr \tilde h_1(\tilde f_2\tilde e_2+\tilde h_2)\tilde e_3= (\tilde h_1\tilde f_2)\tilde e_2\tilde e_3  +  \tilde h_1\tilde h_2\tilde e_3\\
 & \arr (\tilde f_2\tilde h_1-a_{12}\tilde f_2)\tilde e_2\tilde e_3  +  \tilde h_1\tilde h_2\tilde e_3\\
 & = \tilde f_2\tilde h_1\tilde e_2\tilde e_3-a_{12}\tilde f_2\tilde e_2\tilde e_3  +  \tilde h_1\tilde h_2\tilde e_3=\eta(\tau).
\end{align*}
\end{example}

\begin{lemma}\label{L:normal}
    Let $\tau$ be an element of $\mcR$ which is independent of $a_{st}$ with $s\neq t$.
    Suppose that, in every monomial of $\tau$, every $\tilde{f}_t$ occurs to the
    left of every $\tilde{h}_s$ and $\tilde{f}_s$. Then $\eta(\tau)$ is also
    independent of $a_{st}$.
\end{lemma}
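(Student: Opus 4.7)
The plan is to prove that the hypothesis of the lemma is preserved throughout the rewriting that defines $\eta$, which then forces the dangerous rewrite never to fire.  Call a monomial $(s,t)$-\emph{sorted} if every occurrence of $\tilde{f}_t$ lies strictly to the left of every occurrence of $\tilde{h}_s$ and every occurrence of $\tilde{f}_s$.  The hypothesis is exactly that every monomial of $\tau$ is $(s,t)$-sorted.  The key observation is that, among the rewrites used in the construction of $\eta$, the only one that can ever introduce the variable $a_{st}$ is
\begin{equation*}
    \tilde{h}_s \tilde{f}_t \ \longmapsto\ \tilde{f}_t \tilde{h}_s - a_{st}\,\tilde{f}_t,
\end{equation*}
and this rewrite requires the two-letter substring $\tilde{h}_s \tilde{f}_t$ to appear in some monomial.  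Since $(s,t)$-sortedness forces every $\tilde{f}_t$ strictly to the left of every $\tilde{h}_s$, this substring cannot occur in a $(s,t)$-sorted monomial.  Hence, as soon as I know that $(s,t)$-sortedness persists throughout the algorithm, $a_{st}$ is never produced, and $\eta(\tau)$ is independent of $a_{st}$.

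Preservation is checked by a case analysis on the rewrite applied at each step.  Consider the rule $\tilde{e}_u\tilde{f}_v \mapsto \tilde{f}_v\tilde{e}_u + \delta_{uv}\,\tilde{h}_u$.  The first summand swaps adjacent letters, moving the $\tilde{f}$-letter one step to the left.  The only cases that could in principle disturb $(s,t)$-sortedness have $v \in \{s,t\}$: when $v = t$ the $\tilde{f}_t$ only moves further left; when $v = s$ (so $u \neq s$) the new $\tilde{f}_s$ occupies the old slot of $\tilde{e}_u$, which by the prior invariant was strictly to the right of every $\tilde{f}_t$, so every $\tilde{f}_t$ remains on the left of the new $\tilde{f}_s$.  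The second summand $\delta_{uv}\,\tilde{h}_u$ contributes only when $u = v$, and collapses $\tilde{e}_u\tilde{f}_u$ to $\tilde{h}_u$; when $u = s$, the new $\tilde{h}_s$ sits at the former location of $\tilde{e}_s$, which was strictly right of every $\tilde{f}_t$ (since each such $\tilde{f}_t$ sat to the left of the now-vanished $\tilde{f}_s$ at the adjacent position), so the invariant persists.  The rule $\tilde{h}_u\tilde{f}_v \mapsto \tilde{f}_v\tilde{h}_u - a_{uv}\,\tilde{f}_v$ is analogous: the first summand shifts an $\tilde{h}$-letter right and an $\tilde{f}$-letter left, preserving sortedness by the same argument, and the second summand deletes an $\tilde{h}_u$ and moves an $\tilde{f}_v$ one step left, which again respects the invariant.

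The subtle point, and the main obstacle, is that the invariant must control the position of $\tilde{f}_t$ relative to $\tilde{f}_s$, not merely relative to $\tilde{h}_s$.  Indeed, the $\tilde{e}_s\tilde{f}_s$-collapse inserts a fresh $\tilde{h}_s$ at the former site of $\tilde{f}_s$, and without prior knowledge that every $\tilde{f}_t$ already sat to the left of that $\tilde{f}_s$, there would be no way to conclude that the new $\tilde{h}_s$ lies to the right of every $\tilde{f}_t$.  This is precisely why the hypothesis of the lemma is phrased with both $\tilde{h}_s$ and $\tilde{f}_s$, and it is where the bookkeeping needs care; once the correctly strengthened invariant is installed, every rewrite is a direct check.
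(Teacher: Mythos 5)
Your proof is correct and follows essentially the same strategy as the paper's: observe that $a_{st}$ can only be produced by the rewrite $\tilde{h}_s\tilde{f}_t \mapsto \tilde{f}_t\tilde{h}_s - a_{st}\tilde{f}_t$, and show that the hypothesis' ordering condition (including the position of $\tilde{f}_s$, not just $\tilde{h}_s$, relative to $\tilde{f}_t$) is an invariant of the rewriting process, so that pattern never appears. Your case analysis is more explicit than the paper's brief argument, but the underlying idea is the same.
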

\begin{proof}
    An $a_{st}$ is created in the above procedure when we switch $$\tilde{h}_s \tilde{f}_t\arr\tilde{f}_t \tilde{h}_s-a_{st} \tilde{f}_t$$
    and $\tilde{h}_s$ is created whenever we switch $$\tilde{e}_s \tilde{f}_s\arr\tilde{f}_s \tilde{e}_s+ \tilde{h}_s.$$
    When the hypothesis of the lemma holds, all $\tilde{h}_s$'s occur to the
    right of all $\tilde{f}_t$'s in every monomial of $\tau$. If we always
    apply the above steps to the rightmost occurrence of $\tilde{e}_r
    \tilde{f}_u$ or $\tilde{h}_r \tilde{f}_u$, then we never create a
    $\tilde{h}_s$ to the left of any $\tilde{f}_t$. Thus we will never
    create any $a_{st}$'s.  Since $\tau$ is independent of $a_{st}$,
    we conclude that $\eta(\tau)$ is also independent of $a_{st}$.
\end{proof}

Example \ref{Ex:normal_form} illustrates that if $\tilde{f}_t$ occurs to the right of $\tilde{h}_s$ in $\tau$, then $\eta(\tau)$ may dependent on $a_{st}$ even if $\tau$ does not.

For the rest of this section, we work with two Cartan matrices $A$ and $A'$ over finite sets $S$ and $S'$ with $|S|=|S'|$.  We will also fix a bijection $\sigma:S\arr S'$
and for the sake of notational simplicity, we let $A'_{st}:=A'_{\sigma(s)\sigma(t)}$ for all $s,t\in S$ (note that we are not assuming that $A_{st}=A'_{st}$). Let $\mfg(A)$ and $\mfg(A')$ denote the corresponding Kac-Moody algebras to $A$ and $A'$.  We will use the notation
above for the generators of $\mfg(A)$, and refer to the generators of
$\mfg(A')$ by $$h_s':=h_{\sigma(s)},\quad e_s':=e_{\sigma(s)},\quad\text{and}\quad f_s':=f_{\sigma(s)}.$$ Note that we always use $\mfg(A')$
instead of $\mfg'$ to avoid confusion with the commutator subgroup of $\mfg =\mfg(A)$.

Recall that a representation for $\mfg(A)$ is said to be integrable if $e_s$ and $f_s$ are locally nilpotent for every $s\in S$. Given a dominant integral weight
$\lambda$, we let $L^{\max}(\lambda) = L^{\max}(\lambda, A)$ denote the maximal
integrable $\mfg(A)$ module of highest weight $\lambda$.

Given two Cartan matrices $A$ and $A'$ over $S$ and $S'$ as above, we write $A \leq A'$ to mean that
$A_{st} \leq A'_{st}$ for all $s,t\in S$ (in other words, $|A_{st}| \geq |A'_{st}|$) .   For the next few lemmas, we assume the following hypotheses:
\begin{hypothesis}\label{H:quotient}
    \begin{enumerate}[(i)]
        \item $A$ and $A'$ are Cartan matrices over $S$ and $S'$ with $A \leq A'$.
        \item $\lambda \in \mfh(A)^*$ and $\lambda' \in \mfh(A')^*$ are dominant
            integrable weights such that $\lambda(h_s) \geq \lambda'(h_s')$ for
            all $s\in S$.
        \item $V = L^{\max}(\lambda, A)$ and $V' = L^{\max}(\lambda', A')$ are
            the maximal integrable modules with highest weights $\lambda$ and
            $\lambda'$.
        \item $\omega$ and $\omega'$ are highest weight vectors for $V$ and
            $V'$ respectively.
    \end{enumerate}
\end{hypothesis}
\begin{lemma}\label{L:quotient}
    Suppose hypotheses \ref{H:quotient} hold. Then:
    \begin{enumerate}[(a)]
        \item[(a)] There are surjective Lie algebra morphisms
        $$\psi^+ : \mfn^+(A) \arr \mfn^+(A')\qquad \text{and}\qquad \psi^- : \mfn^-(A) \arr \mfn^-(A')$$
        mapping $e_s \mapsto e_s'$ and $f_s \mapsto f_s'$ respectively.
        \item[(b)] There is a surjective $\mfn^-(A)$-module morphism
            $\pi : V \arr V'$ sending $\omega$ to $\omega'$, where $V'$ is
            regarded as a $\mfn^-(A)$-module using $\psi^-$.
    \end{enumerate}
\end{lemma}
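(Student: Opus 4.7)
The proof splits naturally into parts (b) and (c); here is the plan.

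For (b), my plan is to use the Serre presentation of $\mfn^+(A)$ as the free Lie algebra on $\{e_s\}_{s\in S}$ modulo the relations $\ad(e_s)^{1-A_{st}}(e_t)=0$ for $s\neq t$, and analogously for $\mfn^-(A)$. To see that the assignment $e_s\mapsto e'_s$ extends to $\psi^+$, it suffices to verify these Serre relations after substitution. In $\mfn^+(A')$ we know $\ad(e'_s)^{1-A'_{st}}(e'_t)=0$; since $A\leq A'$ gives $1-A_{st}\geq 1-A'_{st}$, applying $\ad(e'_s)$ an additional $A'_{st}-A_{st}$ times keeps the result zero. Thus $\psi^+$ is well-defined, and it is surjective because the $e'_s$ generate $\mfn^+(A')$. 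The same argument applied to the $f_s$'s and their Serre relations produces $\psi^-$.

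For (c), my plan is to factor $\pi$ through the Verma module. Let $M=M(\lambda,A)$ with highest weight vector $\omega$; by PBW, $M$ is free of rank one as a $\mcU(\mfn^-(A))$-module on $\omega$. The extension of $\psi^-$ to enveloping algebras yields a surjection $\mcU(\psi^-):\mcU(\mfn^-(A))\arr\mcU(\mfn^-(A'))$, and hence a well-defined surjective $\mfn^-(A)$-module map
\begin{equation*}
    \tilde\pi : M(\lambda,A)\arr M(\lambda',A'),\qquad u\cdot\omega\mapsto \mcU(\psi^-)(u)\cdot\omega',
\end{equation*}
where $M(\lambda',A')$ is regarded as an $\mfn^-(A)$-module via $\psi^-$. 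The remaining task is to show that $\tilde\pi$ descends modulo the defining relations to give $\pi:V\arr V'$.

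To this end, recall $V=M(\lambda,A)/N$, where $N$ is the $\mfg(A)$-submodule generated by $\{f_s^{\lambda(h_s)+1}\omega:s\in S\}$. A standard singular-vector calculation (using $[e_s,f_s^n]\omega = n(\lambda(h_s)-n+1)f_s^{n-1}\omega$ and $[e_t,f_s]=0$ for $s\neq t$) shows that each $f_s^{\lambda(h_s)+1}\omega$ is annihilated by $\mfn^+(A)$ and is a weight vector, so
\begin{equation*}
    N=\sum_{s\in S}\mcU(\mfn^-(A))\cdot f_s^{\lambda(h_s)+1}\omega.
\end{equation*}
For $u\in\mcU(\mfn^-(A))$ we then compute $\tilde\pi(u\cdot f_s^{\lambda(h_s)+1}\omega)=\mcU(\psi^-)(u)\cdot (f'_s)^{\lambda(h_s)+1}\omega'$, and because $\lambda(h_s)\geq\lambda'(h'_s)$ we may factor $(f'_s)^{\lambda(h_s)+1}=(f'_s)^{\lambda(h_s)-\lambda'(h'_s)}\cdot (f'_s)^{\lambda'(h'_s)+1}$; the rightmost factor annihilates $\omega'$ in $V'=L^{\max}(\lambda',A')$ by definition. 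Thus $\tilde\pi$ descends to the desired surjective $\mfn^-(A)$-module map $\pi$ with $\pi(\omega)=\omega'$.

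The main obstacle I expect is the identification of $N$ as a $\mcU(\mfn^-(A))$-module (rather than merely a $\mcU(\mfg(A))$-module) generated by the given vectors; once this singular-vector observation is in place, the rest is an unwinding of definitions in which the hypotheses $A\leq A'$ and $\lambda(h_s)\geq\lambda'(h'_s)$ each enter exactly once, for parts (b) and (c) respectively.
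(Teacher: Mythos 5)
Your proposal is correct and follows essentially the same route as the paper: part (b) via the Serre presentations and the inequality $1-A_{st}\geq 1-A'_{st}$, and part (c) by constructing the surjection at the level of Verma modules and checking that the generators $f_s^{\lambda(h_s)+1}\omega$ of the maximal submodule map into the corresponding submodule because $\lambda(h_s)\geq\lambda'(h_s')$. The only cosmetic difference is that you carry out the singular-vector computation showing $\mfn^+(A)$ annihilates these generators directly, whereas the paper cites Lemma 2.1.6 of Kumar's book for this fact.
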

\begin{proof}

    Part (a), the fact that $\psi^+ : \mfn^+(A) \arr \mfn^+(A')$ is well-defined and surjective follows from the fact that $1-A_{st} \geq 1-A'_{st}$ for all for all $s,t\in S$.  In particular, the relation $\ad(e_s')^{1-A_{st}}(e_t')=0$ holds in $\mfn^+(A')$.  Similar argument holds for the map $\psi^- : \mfn^-(A) \arr \mfn^-(A')$.

    For part (b), recall that $L^{\max}(\lambda, A)$ is the quotient of the
    Verma module (see \cite[Definition 2.1.1 and Definition 2.1.5]{Kumar02}) $$M(\lambda, A) := \mcU(\mfn^-(A)) \otimes_{\C} \C_{\lambda}$$ by the
    $\mfg(A)$-module $M^1(\lambda, A)$ generated by $f_s^{\lambda(h_s)+1}
    v_\lambda$, where $s\in S$ and $v_{\lambda}$ is the cyclic weight
    vector $1\otimes 1$ in $M(\lambda,A)$.  By \cite[Lemma 2.1.6]{Kumar02}, the action of $\mfn^+$ sends the generators $f_s^{\lambda(h_s)+1}
    v_{\lambda}$ to zero, so that $M^1(\lambda, A)$ is also the
    $\mcU(\mfn^-(A))$-module generated by $f_s^{\lambda(h_s)+1} v_{\lambda}$.  By
    part (a), for any cyclic weight vector $v_{\lambda'}$ of $M(\lambda', A')$,
    there is a surjective $\mcU(\mfn^-(A))$-module map $\pi : M(\lambda, A)
    \arr M(\lambda', A')$ sending $v_{\lambda} \mapsto v_{\lambda'}$. Now
    \begin{equation*}
        \pi(f_s^{\lambda(h_s)+1} v_{\lambda}) = (f'_s)^{\lambda(h_s)+1} v_{\lambda'}
    \end{equation*}
    belongs to $M^1(\lambda', A')$ for all $s\in S$ since $\lambda(h_s) + 1 \geq \lambda'(h_s')+1$.  Thus we get an
    induced morphism $\pi : L^{\max}(\lambda, A) \arr L^{\max}(\lambda', A')$
    on the quotients.
\end{proof}

We now recall the root system of a Kac-Moody algebra $\mfg(A)$ and the inversion set of an element $w\in W(A)$.  Let $Q = Q(A):=\bigoplus_{s \in S} \Z\, \alpha_s\subset \mfh^*$ denote the root lattice and let $R(A)\subseteq Q$ denote the root system of $\mfg(A)$.  We can decompose the set $R(A)=R^+(A)\sqcup R^-(A)$ where $R^+(A)$ and $R^-(A)$ denote the subsets of positive and negative roots respectively.    We have that $\mfn^{\pm}(A)=\bigoplus_{\alpha\in R^{\pm}(A)} \mfg(A)_{\alpha}$ where $\mfg(A)_{\alpha}$ is the root space corresponding to $\alpha$.  The Weyl group $W(A)$ (which is generated by $S$) acts on $Q$ by $$s(\alpha_t):=\alpha_t-\alpha_t(h_s)\, \alpha_s$$ for $s,t\in S$.  Given $w \in W(A)$, define the \textbf{inversion set}
$$I(w) := \{\alpha \in R^+(A) : w^{-1}(\alpha) \in R^-(A)\}.$$
Note that the inversion set is finite of size $\ell(w)$ and lies inside the sublattice $\bigoplus_{s \in S(w)} \Z\, \alpha_s$.
\begin{lemma}\label{L:rootsystem}
    Let $A$ and $A'$ be Cartan matrices over $S$ and $S'$ with $w\in W(A)$ and $w'\in W(A')$.  Suppose that $w$ is Cartan equivalent to $w'$ under the bijection $\sigma : S(w) \arr S(w')$. Then the induced isomorphism
    \begin{equation*}
        \sigma : \bigoplus_{s \in S(w)} \Z\, \alpha_s \arr \bigoplus_{s' \in S(w')}
            \Z\, \alpha_{s'}'
    \end{equation*}
    given by $\alpha_s \mapsto \alpha'_{\sigma(s)}$ identifies $I(w)$ with $I(w')$.
\end{lemma}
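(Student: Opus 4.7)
The plan is to use the classical formula for the inversion set of a reduced expression: if $w = s_1 \cdots s_k$ is reduced, then
$$I(w) = \{\beta_1, \ldots, \beta_k\}, \qquad \beta_i := s_1 s_2 \cdots s_{i-1}(\alpha_{s_i}).$$
I would fix such a reduced word for $w$. By Lemma \ref{L:cartan}, $\sigma(s_1) \cdots \sigma(s_k)$ is a reduced word for $w'$, so the same formula yields $I(w') = \{\beta'_1, \ldots, \beta'_k\}$ with $\beta'_i := \sigma(s_1) \cdots \sigma(s_{i-1})(\alpha'_{\sigma(s_i)})$. Each $\beta_i$ is a $\Z$-linear combination of $\alpha_{s_1}, \ldots, \alpha_{s_i}$, so it lies in $\bigoplus_{s \in S(w)} \Z\, \alpha_s$, and the lemma reduces to showing $\sigma(\beta_i) = \beta'_i$ for every $i$.

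To prove this, I would compute $\beta_i$ iteratively by applying $s_{i-1}, s_{i-2}, \ldots, s_1$ in turn to $\alpha_{s_i}$ via the rule $s_j(\alpha_{s_m}) = \alpha_{s_m} - A_{s_j s_m}\, \alpha_{s_j}$. The key observation is that whenever we are about to apply $s_j$, the current expression is a $\Z$-linear combination of simple roots $\alpha_{s_m}$ with $m > j$ (quick induction: we start with $m = i$; applying $s_{j'}$ only introduces new roots $\alpha_{s_{j'}}$, and we process the indices $j'$ in decreasing order). Consequently, the only Cartan entries appearing in the resulting polynomial expression for $\beta_i$ are of the form $A_{s_p s_q}$ with $p < q \leq i$. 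Performing the analogous computation with the reduced word $\sigma(s_1)\cdots\sigma(s_k)$ produces the same polynomial expressions in the corresponding entries $A'_{\sigma(s_p)\sigma(s_q)}$.

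It then remains to check that $A_{s_p s_q} = A'_{\sigma(s_p)\sigma(s_q)}$ whenever $p < q \leq k$. If $s_p = s_q$ both sides equal $2$. Otherwise $s_p$ and $s_q$ are distinct elements of $S(w)$ with $s_p$ appearing before $s_q$ in the fixed reduced word, so Lemma \ref{L:ij} gives either $A_{s_p s_q} = 0$, in which case $s_p s_q = s_q s_p \leq w$, or $A_{s_p s_q} < 0$, in which case $s_p s_q \leq w$. In both cases Definition \ref{D:Cartan_equiv}(b) applies and yields the claimed equality. Combining this with the previous paragraph gives $\sigma(\beta_i) = \beta'_i$ for every $i$, completing the proof.

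The main subtlety is the bookkeeping in the second step: one must verify that the iterative computation never requires an entry $A_{s_p s_q}$ with $p > q$, since Cartan equivalence does not control such entries when $s_q s_p \leq w$ but $s_p s_q \not\leq w$. Applying the simple reflections from right to left is precisely what exploits this asymmetry and makes the whole argument work.
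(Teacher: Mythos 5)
Your proposal is correct and follows essentially the same route as the paper's proof: both fix a reduced word, use the formula $\beta_i = s_1\cdots s_{i-1}(\alpha_{s_i})$, and run an induction applying the reflections from right to left, with the key point being that the intermediate expressions are supported only on simple roots indexed by letters appearing to the right, so that Lemma \ref{L:ij} and Definition \ref{D:Cartan_equiv}(b) guarantee the needed Cartan entries agree. The "main subtlety" you flag at the end is exactly the observation the paper's proof relies on.
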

\begin{proof}
    Let $w = s_{1} \cdots s_{k}$ be a reduced expression. Then
    $I(w) = \{\beta_1,\ldots,\beta_k\}$, where
    \begin{equation*}
        \beta_\ell = s_{1} \cdots s_{\ell-1} (\alpha_{s_\ell}).
    \end{equation*}
    Given $1\leq \ell\leq k$, we want to show that $\sigma(\beta_\ell)$ belongs to $I(w')$.  First note that if $st \leq w$, then $A_{st} = A'_{\sigma(s)\sigma(t)}$ and hence
    $$\sigma(s(\alpha_t)) =\sigma(\alpha_t-A_{st} \alpha_s)=\alpha_{\sigma(t)}'-A'_{\sigma(s)\sigma(t)}\alpha_{\sigma(s)}'= \sigma(s)(\alpha'_{\sigma(t)}).$$
    In particular, $\sigma(s_{\ell-1}(\alpha_{s_\ell}))=\sigma(s_{\ell-1})(\alpha'_{\sigma(s_\ell)}).$
    For the purpose of induction, suppose that
    \begin{equation*}
        \sigma(s_{m} \cdots s_{\ell-1} (\alpha_{s_\ell})) = \sigma(s_m) \cdots \sigma(s_{\ell-1})(\alpha'_{\sigma(s_\ell)}).
    \end{equation*}
    Let $s_{m-1}=s$ and write $\displaystyle s_{m} \cdots s_{\ell-1} (\alpha_{s_\ell})=\sum_{t\in S} c_t\, \alpha_t$ where $c_t\in\Z$.  If $st\not\leq w$, then $t$ does not appear to the right of any $s$ in the reduced expression for $w$.  This implies the coefficient $c_t=0$ for all $t\in S$ such that $st\not\leq w$.  Hence
    \begin{align*}
        \sigma(s_{m-1}s_m \cdots s_{\ell-1} (\alpha_{s_\ell}))&=\sum_{t\in S} c_t\, \sigma(s_{m-1}(\alpha_t)) =\sum_{t\in S} c_t\, \sigma(s_{m-1})(\alpha'_{\sigma(t)})\\
        &= \sigma(s_{m-1})\left(\sum_{t\in S} c_t\,\alpha'_{\sigma(t)}\right) = \sigma(s_{m-1})\sigma(s_m \cdots s_{\ell-1}(\alpha_{s_\ell})).
    \end{align*}
    By induction, we have
    \begin{equation*}
        \sigma(\beta_\ell) = \sigma(s_{1} \cdots s_{\ell-1} (\alpha_{s_\ell}))=\sigma(s_1) \cdots \sigma(s_{\ell-1})(\alpha'_{\sigma(s_\ell)}).
    \end{equation*}
    Since $\sigma(s_1) \cdots \sigma(s_k)$ is a reduced expression for $w'$, we have $\sigma(\beta_\ell)\in I(w').$
\end{proof}
For $w \in W(A)$, define
\begin{equation*}
    \mfn^+(A)_w := \bigoplus_{\alpha \in I(w)} \mfg(A)_{\alpha}.
\end{equation*}
Since $I(w)$ is closed under bracket, $\mfn^+(A)_w$ is a finite-dimensional nilpotent Lie
algebra. We now add two additional hypotheses:
\begin{hypothesis}\label{H:cartaniso}
    \begin{enumerate}
        \item[(v)] Suppose that $w \in W(A)$ such that $A_{st} = A'_{st}$ for all $st \leq w$.
        \item[(vi)] Let $w' \in W(A')$ denote the element which is Cartan equivalent to $w$ under the bijection $\sigma:S\rightarrow S'.$
    \end{enumerate}
\end{hypothesis}
\begin{lemma}\label{L:nilpotent}
    Suppose that hypotheses \ref{H:quotient} and \ref{H:cartaniso} hold. Then
    $\psi^+$ induces an isomorphism $\mfn^+(A)_w \arr \mfn^+(A')_{w'}$.
\end{lemma}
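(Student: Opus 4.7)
My plan is to use two key observations: $\psi^+$ is not only surjective but surjective on each weight-piece, and root spaces of real roots are one-dimensional. Together these reduce the claim to a collection of linear isomorphisms $\C \to \C$.

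Both $\mfn^+(A)$ and $\mfn^+(A')$ carry weight gradings, with $\mfn^+(A) = \bigoplus_{\beta \in R^+(A)} \mfg(A)_\beta$ and similarly for $A'$. Since $\psi^+$ sends $e_s \mapsto e'_{\sigma(s)}$, it is a graded morphism: for each $\beta \in Q^+(A)$, $\psi^+$ sends $\mfg(A)_\beta$ into $\mfg(A')_{\sigma(\beta)}$, where $\sigma$ is extended $\Z$-linearly to root lattices via $\alpha_s \mapsto \alpha'_{\sigma(s)}$ (with the target understood as zero if $\sigma(\beta)$ fails to be a positive root of $A'$). By Lemma \ref{L:rootsystem}, $\sigma$ restricts to a bijection $I(w) \to I(w')$, and so $\psi^+$ carries $\mfn^+(A)_w$ into $\mfn^+(A')_{w'}$.

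Next I would establish graded surjectivity: for each $\alpha \in I(w)$, the restriction $\psi^+ : \mfg(A)_\alpha \to \mfg(A')_{\sigma(\alpha)}$ is surjective. Given $y \in \mfg(A')_{\sigma(\alpha)}$, global surjectivity from Lemma \ref{L:quotient}(b) provides a preimage $x \in \mfn^+(A)$. Decomposing $x = \sum_\gamma x_\gamma$ into weight components yields $y = \sum_\gamma \psi^+(x_\gamma)$ with each $\psi^+(x_\gamma) \in \mfg(A')_{\sigma(\gamma)}$. Since $\sigma$ is an injection on the root lattice, comparing weights on both sides forces $y = \psi^+(x_\alpha)$, so $x_\alpha \in \mfg(A)_\alpha$ is a preimage.

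Finally, every $\alpha \in I(w)$ lies in the $W$-orbit of a simple root (being the inversion root $s_1 \cdots s_{\ell-1}(\alpha_{s_\ell})$ for a reduced expression), hence is a real root, so $\dim \mfg(A)_\alpha = 1$; likewise $\dim \mfg(A')_{\sigma(\alpha)} = 1$. A surjection between one-dimensional spaces is an isomorphism, so $\psi^+$ is a linear isomorphism on each graded piece, and summing over $\alpha \in I(w)$ gives the claimed isomorphism $\psi^+ : \mfn^+(A)_w \to \mfn^+(A')_{w'}$. I do not anticipate a serious obstacle; the main care is in setting up the graded framework so that the one-dimensional reduction is clean, and in noting that Hypothesis \ref{H:cartaniso} is exactly what is needed to invoke Lemma \ref{L:rootsystem} for the identification of inversion sets.
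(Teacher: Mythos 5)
Your proof is correct and takes essentially the same route as the paper: the paper's (much terser) proof likewise observes that $\psi^+$ respects the weight gradings, sending $\mfg(A)_{\alpha}$ into $\mfg(A')_{\sigma(\alpha)}$, and then appeals to Lemma \ref{L:rootsystem}. The graded surjectivity and the one-dimensionality of real root spaces that you spell out are precisely the details left implicit in the paper's closing phrase ``the lemma now follows from Lemma \ref{L:rootsystem}.''
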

\begin{proof}
    Because $\mfn^+(A)$ is generated by $\{e_s\}_{s\in S}$, it is spanned by Lie monomials in these same variables.  Since each monomial is contained in a root space, we have that $\psi^+( \mfg(A)_{\alpha})\subseteq  \mfg(A')_{\sigma(\alpha)}$ where $\sigma:S(w)\rightarrow S(w')$ is given in Lemma \ref{L:rootsystem}.  The lemma now follows from Lemma \ref{L:rootsystem}.
\end{proof}
\begin{lemma}\label{L:equivariant}
    Let $\exp:\mfg(A)\arr \mcG(A)$ denote the exponential map and suppose that hypotheses \ref{H:quotient} and \ref{H:cartaniso} hold.  Further suppose that $\lambda(h_s) = \lambda'(h'_s)$ for all $s\in S$.
    Then
    \begin{equation*}
        \pi\left( \exp(z)\, v \cdot \omega \right) = \exp(\psi^+(z))\, \sigma(v) \cdot \omega'
    \end{equation*}
    for all $z \in \mfn^+(A)$ and $v \leq w$.
\end{lemma}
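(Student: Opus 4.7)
The plan is to reduce the computation of $\exp(z)\overline{v}\cdot\omega$ to an element of $\mcU(\mfn^-(A))\cdot\omega$ via the normal form machinery developed earlier in the section, then transfer to the $A'$-side using the $\mcU(\mfn^-(A))$-equivariance of $\pi$ together with the Cartan equivalence hypothesis.

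First, I would write $\overline{v}$ using Tits lifts of the simple reflections in a reduced word $v = s_{i_1}\cdots s_{i_m}$, so that $\exp(z)\overline{v}$ becomes a composition of exponentials in $\pm e_{i_k}$, $\pm f_{i_k}$, and $z$. When applied to $\omega$, each exponential truncates to a polynomial (since $V = L^{\max}(\lambda, A)$ has weights bounded above by $\lambda$), giving a finite expression $\phi(A)(\tau)\cdot\omega$ for some $\tau \in \mcR$. Applying the normal form $\eta(\tau) = \sum_{I,J} g_{I,J}(a)\tilde{f}^I y^J$ and evaluating on $\omega$, the monomials $y^J$ that contain any $\tilde{e}_s$ vanish (since $e_s\omega = 0$), and pure $\tilde{h}$-monomials act by a scalar $\prod_k \lambda(h_{s_k})$. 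Hence
\[\exp(z)\overline{v}\cdot\omega = \sum_I c_I(A,\lambda)\, f^I\omega,\]
with $c_I$ polynomial in the $A_{st}$'s and $\lambda(h_s)$'s.

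Second, using Lemma \ref{L:normal} on a suitable rearrangement of $\tau$, I would argue that only $A_{st}$ with $st \leq v \leq w$ can appear in the surviving $c_I$'s. An $a_{st}$ is created in $\eta$ only when an $\tilde{h}_s$ slides past an $\tilde{f}_t$, and by tracking the exponential structure of $\overline{v}$ together with Lemma \ref{L:ij}, such a pair $s,t$ must appear in some reduced word for $v$ with $s$ preceding $t$, i.e., $st \leq v$. Under the Cartan equivalence condition $A_{st} = A'_{\sigma(s)\sigma(t)}$ for $st \leq w$ (hypothesis (v)) and the assumption $\lambda(h_s) = \lambda'(h'_s)$, we obtain $c_I(A, \lambda) = c_I(A', \lambda')$ with indices transported via $\sigma$.

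Finally, I would apply $\pi$: by Lemma \ref{L:quotient}(c), $\pi(f^I\omega) = \psi^-(f^I)\omega'$, so
\[\pi(\exp(z)\overline{v}\cdot\omega) = \sum_I c_I(A',\lambda')\, \psi^-(f^I)\omega'.\]
Running the same normal form analysis on the $A'$-side, starting from $\exp(\psi^+(z))\overline{\sigma(v)}\cdot\omega'$ and the reduced word $\sigma(s_{i_1})\cdots\sigma(s_{i_m})$ of $\sigma(v)$ (valid by Lemma \ref{L:cartan} and Corollary \ref{C:cartan}), yields the same expression, completing the proof. The main obstacle is the second step: carefully verifying via Lemma \ref{L:normal} that only $a_{st}$'s with $st \leq w$ can appear in the relevant coefficients. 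This requires choosing a specific ordering of exponentials in the expression for $\overline{v}$ so that $\tau$ satisfies the hypothesis of Lemma \ref{L:normal}, and is exactly where the Cartan equivalence condition is forced to enter the argument.
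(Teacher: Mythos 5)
Your proposal is correct and follows essentially the same route as the paper's proof: express $\exp(z)\,v\cdot\omega$ via Tits lifts and truncated exponentials as $\phi(A)(\Phi)\cdot\omega$ for a single $\Phi\in\mcR$, pass to the normal form, kill the $\tilde e$'s and evaluate the $\tilde h$'s on the highest weight vector, invoke Lemma \ref{L:ij} and Lemma \ref{L:normal} to show only $a_{st}$ with $st\leq w$ survive, and transfer via the $\psi^-$-equivariance of $\pi$. The one step you flag as the main obstacle (arranging $\Phi$ to satisfy the hypothesis of Lemma \ref{L:normal}) needs no rearrangement: the Tits lifts place the $f$'s in the order of the reduced word, and Lemma \ref{L:ij} already guarantees that when $st\not\leq v$ every $f_t$ precedes every $f_s$.
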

\begin{proof}
    Let $v=s_{1}\ldots s_{k}$ be a reduced expression. If $st
    \not\leq w$ then $st \not\leq v$, so $\sigma(s_{1}) \ldots \sigma(s_{k})$ is
    also a reduced expression for $\sigma(v)$. By \cite[Definition 1.3.2 (5)]{Kumar02}, the action of
    $s$ (resp. $\sigma(s)$) on $V$ (resp. $V'$) is given by
    \begin{equation*}
        \exp(f_s) \exp(-e_s) \exp(f_s) \ \left(\text{resp.}\ \exp(f'_s) \exp(-e'_s) \exp(f'_s) \right).
    \end{equation*}
    Since $V$ is integrable, for any $s\in S$ and $u\in V$, there is an integer $N_0$ such that
    $$\exp(-e_s)\cdot u=\left(\sum_{k=0}^{N} \frac{(-e_s)^k}{k!}\right)\cdot u\quad\text{and}\quad\exp(f_s)\cdot u=\left(\sum_{k=0}^{N}\frac{f_s^k}{k!}\right)\cdot u$$
    for all $N\geq N_0$.  The same applies to $\exp(-e'_i)$ and $\exp(f'_i)$ when acting on $V'$. Since $V$ and $V'$ are highest-weight modules, we also have similar expressions for $\exp(z)$ and $\exp(\psi^+(z))$. Thus we can find an integer $N>>0$ such that
    \begin{align*}
        \exp(z)\, v \cdot \omega= \exp(z)(s_1\cdots s_k) \cdot \omega= \left(\sum_{j=0}^{N} \frac{z^{j}}{j!}\right) \cdot
            \left( \sum_{(l_1,m_1,n_1)\in[N]^3} \frac{f_{s_1}^{l_1}(-e_{s_1})^{m_1}f_{s_1}^{n_1}}{(l_1!)(m_1!)(n_1!)}\right) \cdots \\ \cdots  \left(\sum_{(l_k,m_k,n_k)\in[N]^3} \frac{f_{s_k}^{l_k}(-e_{s_k})^{m_k}f_{s_k}^{n_k}}{(l_k!)(m_k!)(n_k!)}\right)
            \cdot \omega
    \end{align*}
    and
    \begin{align*}
        \exp(\psi^+(z))\, \sigma(v) \cdot \omega'= \left(\sum_{j=0}^{N} \frac{\psi^+(z)^{j}}{j!}\right) \cdot
            \left( \sum_{(l_1,m_1,n_1)\in[N]^3} \frac{({f'}_{s_1})^{l_1}(-{e'}_{s_1})^{m_1}(f'_{s_1})^{n_1}}{(l_1!)(m_1!)(n_1!)}\right) \cdots \\ \cdots  \left(\sum_{(l_k,m_k,n_k)\in[N]^3} \frac{({f'}_{s_k})^{l_k}(-{e'}_{s_k})^{m_k}({f'}_{s_k})^{n_k}}{(l_k!)(m_k!)(n_k!)}\right)
            \cdot \omega',
    \end{align*}
    where $[N]:=\{0,1,\ldots N\}$.  Since $\mfn^+(A)$ is generated by $\{e_s\ |\ s\in S\}$, the element $z$ is a polynomial in $\{e_s\ |\ s\in S\}$. Since
    $\psi^+$ sends $e_s \mapsto e_s'$, we can thus write down an element $\Phi$
    in $\mcR$ such that
    \begin{equation*}
        \exp(z) v \cdot \omega = \phi(A)(\Phi)\cdot \omega \quad \text{ and } \quad
        \exp(\psi^+(z)) \sigma(v) \cdot \omega' = \phi(A')(\Phi)\cdot \omega'.
    \end{equation*}
    Now replace $\Phi$ with the normal form
    \begin{equation*}
        \eta(\Phi) = \sum_{I,J} g_{IJ}(a) \tilde{f}^I y^J.
    \end{equation*}
    Note that $h_s \omega = \lambda(h_s) \omega$ and $e_s \omega = 0$ for all $s\in S$. Hence, for any $I$ in the above sum, we have that $\phi(A)(y^I) \omega = c_I \omega$ for some $c_I \in \Z$. Since $\lambda(h_s) = \lambda'(h'_s)$, we also have $\phi(A')(y^I) \omega'
    = c_I \omega'$. Letting
    \begin{equation*}
        \eta_0 = \sum_{I,J} c_J g_{IJ}(a) \tilde{f}^I
    \end{equation*}
    yields
    \begin{equation*}
        \phi(A)(\Phi) \cdot\omega = \phi(A)(\eta_0)\cdot \omega \quad \text{and}
            \quad\phi(A')(\Phi) \cdot\omega' = \phi(A')(\eta_0) \cdot\omega'.
    \end{equation*}
    If $st \not\leq w$ then $st \not\leq v$, and this implies that
    $f_t$ occurs to the left of every $f_s$ in $\Phi$ by Lemma \ref{L:ij}.
    Since $h_s$ does not occur in $\Phi$, Lemma \ref{L:normal} implies that
    $\eta(\Phi)$ is independent of $a_{st}$. This also implies that $\eta_0$ is
    independent of $a_{st}$. On the other hand, if $st \leq w$, then
    $A_{st} = A'_{st}$. So
    \begin{equation*}
        \eta_0 = \sum_{I,J} c_J g_{IJ}(A) \tilde{f}^I = \sum_{I,J} c_J g_{IJ}(A') \tilde{f}^I.
    \end{equation*}
    It follows that $\phi(A')(\eta_0) = \psi^-(\phi(A)(\eta_0))$. Finally, the map $\pi$
    is $\psi^-$-equivariant, so
    \begin{align*}
        \pi\left( \exp(z) v \cdot \omega \right) & = \pi (\phi(A)(\Phi) \cdot \omega)
            = \pi(\phi(A)(\eta_0) \cdot \omega) \\
        & = \psi^-( \phi(A)(\eta_0)) \pi(\omega) = \phi(A')(\eta_0) \omega' \\
        & = \phi(A')(\Phi) \omega' =  \exp(\psi^+(z)) \sigma(v) \cdot \omega'.
    \end{align*}
\end{proof}
\begin{proof}[Proof of Proposition \ref{P:cartantoalg}]
    Let $A$ and $A'$ be Cartan matrices over $S$ and $S'$ respectively and that $\sigma:S(w)\arr S(w')$ gives a Cartan equivalence between $(w,A)$ and $(w', A')$. without loss of generality, we can assume that $S=S(w)$ and $S'=S(w')$ (for instance, see \cite[Lemma 4.8]{Richmond-Slofstra16}). In particular, hypotheses
    \ref{H:cartaniso} will hold. Assume (now with loss of generality) that
    $A \leq A'$. By \cite[Definition 7.1.19]{Kumar02}, the stable variety structure on
    $X(w,A)$ is induced (meaning that there is a closed embedding) by taking
    the map
    \begin{equation*}
        \mcG(A) / \mcB(A) \arr \mbP(V) : g \mapsto g \cdot \omega,
    \end{equation*}
    where $\mcB(A)$ is the Borel subgroup of $\mcG(A)$, and $\omega$ is the
    highest weight vector of $V = L^{\max}(\lambda, A)$ for a large enough
    dominant weight $\lambda$. By possibly increasing $\lambda$, we can assume
    that the variety structure on $X(w',A')$ is induced by taking the map
    \begin{equation*}
        \mcG(A') / \mcB(A') \arr \mbP(V') : g \mapsto g \cdot \omega',
    \end{equation*}
    where $\omega'$ is the highest weight of $V' = L^{\max}(\lambda',A')$
    for a dominant weight $\lambda'$ with $\lambda'(h_{\sigma(s)}) = \lambda(h_{s})$.
    Hence all parts of Hypothesis \ref{H:quotient} are satisfied.

    Consider the map $\pi : X(w, A) \arr \mbP(V')$ induced by $\pi : V \arr
    V'$. By \cite[6.2.E.1]{Kumar02}, every point of $X(w,A)$ can be written uniquely as
    $\exp(x)\, v \cdot\omega$ for some $v \leq w$ and $x \in \mfn^+(A)_v$. By Corollary
    \ref{C:cartan} and Lemmas \ref{L:nilpotent} and \ref{L:equivariant}, the map $\pi$
    induces a bijection $X(w,A) \arr X(w',A')$. Since $X(w',A')$ is a normal variety (see \cite[Theorem 8.2.2 (b)]{Kumar02}),
    the map $\pi$ restricted to $X(w,A)$ must be an algebraic isomorphism.

    Now suppose that $A\nleq A'$.  Define $\tilde{A}$ by $\tilde{A}_{st} = \min(A_{st}, A'_{st})$.
    By Lemma \ref{L:cartan}, there is an element $\tilde{w}$ in $W(\tilde{A})$
    such that $(\tilde w,\tilde A)$ is Cartan equivalent to both $(w,A)$ and $(w',A')$.
    We also have $\tilde{A} \leq A$ and $\tilde{A} \leq A'$ and hence the above argument implies $X(w,A) \iso X(\tilde{w},\tilde{A}) \iso X(w',A')$.
\end{proof}

\section{The cohomology ring of Schubert varieties}\label{S:cohomology}

In this section, we finish the proof of Theorem \ref{T:iso} by showing that
condition (2) implies condition (3), and that condition (3) implies condition
(1).  Let $A$ be a Cartan matrix over $S$, and let $w\in W(A)$. Recall that
$\mcB = \mcB(A)$ is the Borel subgroup of the Kac-Moody group $\mcG(A)$.  The
Schubert variety $X(w,A)$ has a stratification given by its decomposition into
Schubert cells
\begin{equation*}
    X(w,A)=\overline{\mcB w \mcB} / \mcB=\bigsqcup_{u\leq w} \mcB u\mcB/\mcB.
\end{equation*}
Let $x_v$ denote the fundamental class of the Schubert subvariety
$X(v,A)=\overline{\mcB v \mcB} / \mcB\subseteq X(w,A)$ in the integral homology
group $H_{2\ell(v)}(X(w,A)):=H_{2\ell(v)}(X(w,A),\Z)$.  Equivalently, if we
consider $X(v,A)$ as a cycle in the Chow group $\Ch_*(X(w,A))$, then
$$x_v=\cl(X(v,A))$$ where $\cl:\Ch_*(X(w,A))\arr H_*(X(w,A))$ denotes the cycle
map between Chow groups and homology.  It is well known that the Schubert
homology classes $\{x_v\}_{v\leq w}$ form a $\Z$-basis of $H_*(X(w,A))$.
The corresponding Schubert basis $\{\xi_v\}_{v\leq w}$ in integral cohomology
$H^*(X(w,A)):=H^*(X(w,A),\Z)$ is defined (using the identification
$H^*(X(w,A))\simeq \Hom_{\Z}(H_*(X(w,A)),\Z)$) by
\begin{equation*}
    \xi_v(x_u):=\delta_{vu}.
\end{equation*}
We now prove that condition (2) implies condition (3) in Theorem \ref{T:iso}.

\begin{prop}\label{P:algisomtocohom}
Let $A$ and $A'$ be Cartan matrices with $w\in W(A)$ and $w'\in W(A')$ and suppose that $\phi:X(w,A)\arr X(w',A')$ is an algebraic isomorphism.
Then the induced map
$$\phi^*:H^*(X(w',A'))\arr H^*(X(w,A))$$
is a graded ring isomorphism that identifies Schubert bases.
\end{prop}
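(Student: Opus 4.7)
The plan is to reduce preservation of the Schubert cohomology basis to a statement about homology pushforward, then to exploit the fact that closed subvarieties of cellular varieties have non-negative Schubert expansion.

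Since $\phi$ is a homeomorphism, $\phi^*$ is automatically a graded ring isomorphism, so only the Schubert-basis preservation requires argument. Via the Kronecker duality pairing $\langle \xi_v, x_u \rangle = \delta_{v,u}$, this is equivalent to showing that the homology pushforward $\phi_* : H_*(X(w,A)) \arr H_*(X(w',A'))$ permutes the Schubert homology basis. For each $v \leq w$, one has $\phi_*(x_v) = [\phi(X(v,A))]$, the fundamental class of an irreducible closed subvariety of $X(w',A')$ of dimension $\ell(v)$.

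The crucial step is to invoke the standard fact that in a complex projective variety stratified by affine cells, the fundamental class of any irreducible closed subvariety of dimension $d$ expands as a non-negative integer combination of the classes of the $d$-dimensional cell closures. Applied to $X(w',A')$ with its Bruhat decomposition, this yields an expansion
\begin{equation*}
    \phi_*(x_v) = \sum_{\substack{v' \leq w' \\ \ell(v') = \ell(v)}} c_{v,v'}\, x_{v'}, \qquad c_{v,v'} \in \Z_{\geq 0}.
\end{equation*}
Applying the same reasoning to $\phi^{-1}$, the matrix $M$ of $\phi_*$ on each graded piece of homology and its inverse $M^{-1}$ are both non-negative integer matrices. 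A short entry-by-entry argument using $MM^{-1} = I = M^{-1}M$ together with non-negativity of all entries forces any such matrix to be a permutation matrix. Hence $\phi_*(x_v) = x_{\tau(v)}$ for some length-preserving bijection $\tau : [e,w] \arr [e,w']$, and dualizing yields $\phi^*(\xi_{v'}) = \xi_{\tau^{-1}(v')}$, as required.

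The main technical input is the non-negativity of the Schubert expansion for arbitrary closed subvarieties; this is a standard fact for varieties equipped with an algebraic cell decomposition (see for instance Fulton's \emph{Intersection Theory}, or the CW filtration by cell skeleta) and applies to Schubert varieties through their Bruhat cell decomposition, so no serious obstacle is anticipated there. Everything else is essentially formal once the Kronecker duality between Schubert bases in homology and cohomology is used.
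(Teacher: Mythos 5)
Your proposal is correct and follows essentially the same route as the paper: both reduce the statement to showing that $\phi_*$ permutes the Schubert basis of homology, using the fact that classes of closed subvarieties expand non-negatively in the Schubert cycles (your non-negative-integer-matrix argument is just an explicit version of the paper's observation that the Schubert cycles are the extremal rays of the effective cone, which any isomorphism must permute). The one point to tighten is the justification of that key positivity input: non-negativity of the Schubert expansion of an arbitrary irreducible closed subvariety is \emph{not} a formal consequence of having an algebraic cell decomposition (a cell decomposition only gives that the cell closures form a basis of homology); it relies on the Bruhat cells being orbits of the connected solvable group $\mcB$, so that any subvariety degenerates to a non-negative combination of orbit closures, and in the Kac--Moody setting this is precisely \cite[Proposition 5]{Kumar-Nori98}, which is the citation the paper uses.
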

\begin{proof}[Proof of Proposition \ref{P:algisomtocohom}]
Let $\Ch_*(X(w,A))$ and $\Ch_*(X(w',A'))$ denote the Chow groups of $X(w,A)$
and $X(w',A')$.  Since $\phi$ is an algebraic isomorphism, the induced
isomorphism of Chow groups $\phi_*:\Ch_*(X(w,A))\arr \Ch_*(X(w',A'))$ preserves
the cone of effective classes.  By \cite[Corollary to Theorem 1]{FMSS95}, any
effective class is a nonnegative $\Z$-linear combination of Schubert cycles.\footnote{This
idea effectively goes back to \cite{Hirschowitz84}; see also \cite{Kumar-Nori98}.}
Hence the Schubert cycles form the minimal extremal rays of the effective cone,
so  $\phi_*$ maps Schubert cycles to Schubert cycles. Since the Schubert classes
in cohomology are dual to the Schubert cycles in homology, $\phi^*$ maps
Schubert classes to Schubert classes.
\end{proof}
To finish the proof of Theorem \ref{T:iso}, we just need to show that condition (3) implies condition (1):
\begin{prop}\label{P:cohomtoCartan}
Let $A$ and $A'$ be Cartan matrices with $w\in W(A)$ and $w'\in W(A')$.  Suppose there is graded ring isomorphism $$\phi:H^*(X(w,A))\arr H^*(X(w',A'))$$ which identifies Schubert bases.  Then $(w,A)$ is Cartan equivalent to $(w',A')$.
\end{prop}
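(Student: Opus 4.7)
The plan is to recover from $H^*(X(w,A))$ together with its Schubert basis both the Cartan entries $A_{st}$ for $st\leq w$ and the right-weak Bruhat-cover structure of $[e,w]$, and then transport this data through $\phi$ to produce a Cartan equivalence. Since $\phi$ is graded and preserves the Schubert basis, it induces a length-preserving bijection $\sigma:[e,w]\to [e,w']$ with $\phi(\xi_v)=\xi_{\sigma(v)}$; restricting to length one yields a bijection $\sigma_S:S(w)\to S(w')$, which will be the underlying map of the Cartan equivalence.

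I would first verify condition (b) of Definition \ref{D:Cartan_equiv}. For each $st\leq w$, restricting the products $\xi_s\xi_t$ and $\xi_t^2$ in $H^*(X(w,A))$ to the two-dimensional Schubert subvariety $X(st,A)$ and invoking the Hirzebruch description in Example \ref{Ex:hirzebruch}, one reads off that the coefficient of $\xi_{st}$ in $\xi_s\xi_t$ is $1$ and the coefficient of $\xi_{st}$ in $\xi_t^2$ is $-A_{st}$. Applying $\phi$ and comparing with the analogous identities in $H^*(X(w',A'))$, obtained by restriction to $X(\sigma_S(s)\sigma_S(t),A')$, forces $\sigma(st)=\sigma_S(s)\sigma_S(t)$ and $A_{st}=A'_{\sigma_S(s)\sigma_S(t)}$.

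The main step, and the one I expect to be the principal obstacle, is condition (a): producing matching reduced words. My approach uses the Chevalley formula in Schubert cohomology: for $s\in S(w)$ and $v\in[e,w]$,
$$\xi_s\cdot\xi_v=\sum_{\beta>0,\ vs_\beta\gtrdot v,\ vs_\beta\leq w}\langle\omega_s,\beta^\vee\rangle\,\xi_{vs_\beta},$$
where $\omega_s$ is the fundamental weight characterized by $\omega_s(h_{s'})=\delta_{ss'}$ and $\beta^\vee$ is the coroot of $\beta$. For each Bruhat cover $v\lessdot u=vs_\beta$ in $[e,w]$, let $c_{v,u}\in\Z^{S(w)}$ be the vector whose $s$-component is the coefficient of $\xi_u$ in $\xi_s\cdot\xi_v$. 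The formula identifies $c_{v,u}(s)$ with the coefficient of $h_s$ in the expansion of $\beta^\vee$ in simple coroots; since $\beta\mapsto\beta^\vee$ is injective on positive roots, $c_{v,u}$ is a standard basis vector exactly when $\beta=\alpha_s$ for some $s$, equivalently when $v\lessdot u$ is a right-weak cover with $u=vs$. The ring together with its Schubert basis therefore intrinsically records which covers are right-weak along with their simple-reflection labels.

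Because $\phi$ preserves Schubert bases and ring structure, it transports these coefficient vectors via $\sigma_S$, so $v\lessdot u$ is a right-weak $s$-cover in $[e,w]$ if and only if $\sigma(v)\lessdot\sigma(u)$ is a right-weak $\sigma_S(s)$-cover in $[e,w']$. A reduced word $w=s_1\cdots s_k$ is equivalent to a saturated chain of right-weak covers $e\lessdot s_1\lessdot s_1s_2\lessdot\cdots\lessdot w$ with labels $s_1,\ldots,s_k$; its image under $\sigma$ is a saturated chain of right-weak covers in $[e,w']$ ending at $\sigma(w)=w'$ with labels $\sigma_S(s_1),\ldots,\sigma_S(s_k)$, giving the reduced word $w'=\sigma_S(s_1)\cdots\sigma_S(s_k)$ required by condition (a). Combined with condition (b), this exhibits $\sigma_S$ as a Cartan equivalence between $(w,A)$ and $(w',A')$.
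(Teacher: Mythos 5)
Your proposal is correct in substance but takes a genuinely different route from the paper on the key step, the recovery of reduced words. The paper recovers reduced words indirectly: for each $J\subseteq S(w)$ it forms the subring $H^J$ generated by the simple classes outside $J$, proves that its support set $E^J$ corresponds exactly to $W^J\cap[e,w]$ (Lemma \ref{L:coset_reps}), reads off right descent sets from the $E^{\{s\}}$, and then builds $\widetilde\Red(\xi_v)$ by downward induction on descents. You instead detect right weak covers directly: by Chevalley's formula the coefficient of $\xi_{u}$ in $\xi_s\xi_v$ for a Bruhat cover $v\lessdot u=vs_\gamma$ is $\omega_s(\gamma^\vee)$, so the vector of these coefficients over $s\in S(w)$ is the coordinate vector of $\gamma^\vee$ in the simple coroots, and it is a standard basis vector precisely when $\gamma$ is simple, i.e.\ when the cover is a labelled right weak cover; a reduced word is then a saturated right-weak chain, which $\phi$ transports. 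This is arguably more economical (you only need one chain, not the full descent apparatus), at the cost of invoking that the real-root-to-coroot correspondence is a bijection sending simple roots to simple coroots, and that the roots $\gamma$ occurring are supported on $S(w)$ so that the truncated coefficient vector really determines $\gamma^\vee$; both are standard but should be said. For the Cartan entries your argument is essentially the paper's Lemma \ref{L:simple_products} and Corollary \ref{C:simple_products} in different clothing, but as written it elides two points: when $st\leq w$, $ts\leq w$ and $st\neq ts$, the product $\xi_s\xi_t=\xi_{st}+\xi_{ts}$ alone does not single out $\xi_{st}$, so you need the intersection $\Supp(\xi_s\xi_t)\cap\Supp(\xi_t^2)=\{\xi_{st}\}$ (or your weak-cover labelling) to pin down $\phi(\xi_{st})=\xi_{\sigma_S(s)\sigma_S(t)}$; and ``restricting to $X(\sigma_S(s)\sigma_S(t),A')$'' presupposes $\sigma_S(s)\sigma_S(t)\leq w'$, which you must first deduce (it follows from your condition (a) together with the subword property, or from the support comparison as in the paper). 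With those points made explicit the argument goes through.
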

For the proof of Proposition \ref{P:cohomtoCartan}, suppose we have two
Schubert varieties $X(w,A)$ and $X(w',A')$ with an isomorphism $\phi$ between
cohomology rings as in the proposition.  Let $E:=\{\xi_v\}_{v\leq w}$ denote
the Schubert basis of $H^*(X(w,A))$.  As sets, there is a bijection $E\arr
[e,w]$ by mapping $\xi_u\mapsto u$.  Define $\tilde S(w):=E\cap H^2(X(w,A))$.
The bijection $E \arr [e,w]$ sends $\xi_s$ to $s$, and hence identifies $\tilde
S(w)$ with $S(w)$. We similarly define $E'$ and $\tilde{S}(w')$ for
$H^*(X(w',A'))$.  By assumption, the isomorphism $\phi$ restricted to $E$ gives
a degree-preserving bijection $\phi:E\arr E'$.  Let $\sigma : S(w) \arr S(w')$
be the bijection corresponding to $\phi|_{\tilde S(w)}$, so in particular
$\phi(\xi_s)=\xi_{\sigma(s)}$ for any $s \in S(w)$.  To prove Proposition
\ref{P:cohomtoCartan}, we will show that $\sigma$ is a Cartan equivalence. For
this, we need to show that all of the relevant entries of the Cartan matrix and
the reduced word structure of $w\in W(A)$ can be reconstructed from the ring
structure of $H^*(X(w,A))$.

Recall the set of roots $R(A)\subseteq Q(A)=\bigoplus_{s\in S}
\Z\alpha_s\subseteq \mfh^*(A)$.  For notational simplicity, we denote $R:=R(A)$
and $R^{\pm}:=R^{\pm}(A)$. A root $\beta\in R$ is said to be a real root if
$\beta=v(\alpha_s)$ for some $v\in W(A)$ and $s\in S$.  The Weyl group $W(A)$
acts on the dual space $\mfh(A)$ by
$$s(h):=h-\alpha_s(h)\, h_s$$
for $s\in S$ and $h\in\mfh(A)$, where $h_s \in \mfh(A)$, $s \in S$ are vectors
as in Section 3 (so $\alpha_t(h_s) = A_{st}$). The restriction of this action to $\bigoplus_{s \in S} \Z h_s$ is the dual action to the action of $W(A)$ on $Q(A)$.
 For a real root $\beta=v(\alpha_s)$, the corresponding coroot is
$\beta^{\vee}:=v(h_s)\in \mfh(A)$, and the corresponding reflection is
$s_{\beta}:=vsv^{-1}\in W(A).$ Note that $\alpha_t^{\vee}=h_t$ and
$s_{\alpha_t}=t$ for all $t\in S$.  Finally, we pick fundamental coweights
$\{\omega_s\}_{s\in S}\subseteq \mfh^*(A)$ satisfying the formula $$\omega_s(h_t):=\delta_{st}.$$
The main computational tool used to prove Proposition \ref{P:cohomtoCartan} is Chevalley's (non-equivariant) formula for multiplying Schubert classes by simple Schubert classes.
We use here the version from \cite[Theorem 11.1.7 (i)]{Kumar02}.
\begin{prop}\label{P:Chevalley}(Chevalley's formula)
For any $\xi_s\in \tilde S(w)$ and $\xi_u\in E$,
$$\xi_s\cdot \xi_u=\sum \omega_s(u^{-1}(\beta^{\vee}))\, \xi_{s_{\beta}u}$$ where the sum is over all real roots $\beta\in R^+$ such that $\ell(s_{\beta}u)=\ell(u)+1$ and $s_{\beta}u\leq w$.
\end{prop}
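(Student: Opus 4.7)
The final statement is Chevalley's formula, which is a classical result in Kac-Moody Schubert calculus cited directly from \cite[Theorem 11.1.7 (i)]{Kumar02}. My proof plan is therefore a sketch of the standard argument rather than a new derivation. The first step is to reduce from the Schubert subvariety $X(w,A)$ to the ambient full flag variety $\mcX(A)$. The closed embedding $X(w,A) \incl \mcX(A)$ is compatible with Schubert stratifications, so the pullback on cohomology sends $\xi_v \in H^*(\mcX(A))$ to $\xi_v$ if $v \leq w$ and to $0$ otherwise. Thus it suffices to prove the analogous formula on $\mcX(A)$ without the constraint $s_\beta u \leq w$; applying the restriction then cuts the sum down to those $s_\beta u \leq w$, which is exactly the stated formula.

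On the full flag variety, I would identify $H^2(\mcX(A))$ with the integral weight lattice via the first Chern classes of line bundles $\mcL_\lambda$. Since $X(s,A) \iso \mbP^1$ and $\mcL_{\omega_s}$ restricts to $\mcO(1)$ on $X(s,A)$ and trivially to $X(t,A)$ for $t\neq s$, the fundamental weight $\omega_s$ corresponds precisely to $\xi_s$. It then suffices to verify the broader identity
\[
    \lambda \cdot \xi_u = \sum_{\beta} \lambda(u^{-1}(\beta^{\vee}))\, \xi_{s_\beta u}
\]
for an arbitrary integral weight $\lambda$, where the sum is over positive real roots $\beta$ satisfying $\ell(s_\beta u) = \ell(u)+1$.

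To establish this broader identity, I would invoke the BGG/nil-Hecke machinery. Schubert classes are produced by applying divided difference operators $\partial_{s_i}$ to an appropriate polynomial expression in the symmetric algebra of $\mfh^*$, and these operators satisfy the Leibniz-type rule $\partial_s(\lambda\cdot f) = s(\lambda)\cdot\partial_s(f) + \lambda(h_s)\, f$. A straightforward induction on $\ell(u)$ using a reduced expression for $u$, together with the initial identity $\lambda \cdot \xi_e = \sum_s \lambda(h_s) \xi_s$, yields the stated formula after tracking how the $W$-action moves $\lambda$ through successive divided differences. An equivariant alternative is to work in $T$-equivariant cohomology and apply GKM localization: restrictions of Schubert classes to torus fixed points satisfy the Andersen--Jantzen--Soergel / Billey formula, and Chevalley's formula becomes a direct comparison of localized data.

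The main technical obstacle is the infinite-dimensional setting: $\mcX(A)$ is an ind-variety when $A$ is not of finite type, so $H^*$ must be set up carefully, for instance as a projective limit over the cohomologies of finite-dimensional Schubert subvarieties, and one must check that the Chern-class identification and the nil-Hecke formalism extend to this setting with consistent sign conventions. This foundational setup is the genuinely delicate part of any proof, and I would defer entirely to \cite{Kumar02} for these technicalities rather than rederive substantial portions of \cite[Chapter 11]{Kumar02} here.
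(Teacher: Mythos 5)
Your proposal is correct and matches the paper's treatment: the paper gives no proof of this proposition, citing it directly as \cite[Theorem 11.1.7 (i)]{Kumar02}, exactly as you ultimately do. Your sketch of the standard argument (restriction from the ind-variety, identification of $\xi_s$ with $\omega_s$, and the nil-Hecke/GKM derivation) is a reasonable outline of what lies behind that citation, but no new argument is needed or offered by the authors.
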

Note that if $\ell(s_{\beta}u)=\ell(u)+1$, then $u^{-1}(\beta^{\vee})\in R^+$ and hence $\omega_s(u^{-1}(\beta^{\vee}))$ is a nonnegative integer.  For any $F\in H^*(X(w,A)),$ define \textbf{the support of} $F$ as
$$\Supp(F):=\{\xi_v\in E : F(x_v)\neq 0\}.$$
In other words, if we write $F=\sum_{v\leq w} c_v\, \xi_v$, then
$\xi_v\in\Supp(F)$ if and only if $c_v\neq 0$. Let $\prec$ be the partial order
on $E$ generated by the covering relations $\xi_u\prec\xi_v$ for $u$ and $v$
such that $\xi_v\in \Supp(\xi_s\xi_u)$ for some $s\in S(w)$. In the next lemma, we show that $\prec$ corresponds
to Bruhat order $\leq$ on $[e,w]$:
\begin{lemma}\label{lem:bruhat}
$\xi_u\prec\xi_v$ if and only if $u\leq v$.
\end{lemma}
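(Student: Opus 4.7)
The plan is to treat the two implications separately, and in each direction reduce to handling a single covering relation, then invoke transitivity.

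For the forward direction, it suffices to show that a generating relation $\xi_v \in \Supp(\xi_s \xi_u)$ (with $s \in S(w)$) forces $u \lessdot v$ in Bruhat order. This is immediate from Proposition \ref{P:Chevalley}: every term on the right-hand side has the form $\xi_{s_\beta u}$ for a positive real root $\beta$ with $\ell(s_\beta u) = \ell(u)+1$, and this is precisely the characterization of a Bruhat cover of $u$. Transitivity then yields $\xi_u \prec \xi_v \Rightarrow u \leq v$.

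For the converse, by transitivity of Bruhat order it suffices to show that every Bruhat covering $u \lessdot v$ with $v \leq w$ produces a $\prec$-covering $\xi_u \prec \xi_v$. Fix a reduced word $v = v_1 \cdots v_k$ such that $u$ is obtained by deleting one letter $v_i$. A standard computation using that simple reflections are involutions gives $vu^{-1} = s_\beta$ with $\beta = v_1 \cdots v_{i-1}(\alpha_{v_i})$, and therefore
\[
u^{-1}(\beta^\vee) \;=\; v_k v_{k-1} \cdots v_{i+1}(h_{v_i}).
\]
Since $\ell(s_\beta u) > \ell(u)$, this coroot is positive; write $u^{-1}(\beta^\vee) = \sum_t c_t h_t$ with all $c_t \geq 0$ and at least one $c_t > 0$. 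Iterating the reflection formula $s(h_t) = h_t - A_{ts} h_s$ shows inductively that the support $\{t : c_t > 0\}$ is contained in $\{v_i, v_{i+1}, \ldots, v_k\} \subseteq S(v) \subseteq S(w)$. Choosing any $s \in S(w)$ with $c_s > 0$, Chevalley's formula yields $\omega_s(u^{-1}(\beta^\vee)) = c_s > 0$, so $\xi_v$ appears with positive coefficient in $\xi_s \xi_u$, giving $\xi_u \prec \xi_v$.

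The main obstacle is the second direction: one must ensure that the simple reflection $s$ supplied by Chevalley's formula actually lies in $S(w)$, since those are the only $\xi_s$ available in $H^*(X(w,A))$. The key observation unlocking this is that a reduced word for $v \leq w$ forces the support of the positive coroot $u^{-1}(\beta^\vee)$ to lie inside $S(v) \subseteq S(w)$, so a suitable $s$ can always be found.
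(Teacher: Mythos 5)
Your proof is correct and follows essentially the same route as the paper's: reduce to covering relations, read the forward implication off Chevalley's formula, and for the converse use that $u^{-1}(\beta^\vee)$ is a positive coroot to produce an $s\in S(w)$ with $\omega_s(u^{-1}(\beta^\vee))>0$. The only difference is that you spell out, via the explicit formula $u^{-1}(\beta^\vee)=v_k\cdots v_{i+1}(h_{v_i})$, why such an $s$ can be taken inside $S(w)$ --- a point the paper asserts without elaboration.
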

\begin{proof}
It suffices to consider covering relations.  Recall that $u\leq v$ if and only
if $v=s_{\beta}u$ with $\ell(v)=\ell(u)+1$ for some real root $\beta\in R^+$.
Hence $\xi_u\prec\xi_v$ immediately implies $u\leq v$.  Conversely, if $u\leq
v$ with $\ell(v) = \ell(u)+1$, then $v = s_{\beta}u$ where
$u^{-1}(\beta^{\vee})\in R^+$.  So there exists $s\in S(w)$ for which
$\omega_s(u^{-1}(\beta^{\vee}))>0$, implying that
$\xi_v\in\Supp(\xi_s\cdot\xi_u)$.  Hence $\xi_u\prec\xi_v$.
\end{proof}
Next we show that Cartan matrix entries $A_{st}$ for $st \leq w$ can
be recovered from products of simple Schubert classes.
\begin{lemma}\label{L:simple_products}
Let $s,t\in S(w)$ such that $s\neq t$.  Then:
\begin{enumerate}
\item  $\Supp(\xi_s\xi_t)\cap \Supp(\xi_t^2)\neq \emptyset$ if and only if $st\leq w$ and $st\neq ts$.  In this case,
    $$\Supp(\xi_s\xi_t)\cap \Supp(\xi_t^2)=\{\xi_{st}\}\quad \text{and}\quad A_{st}=-\xi_t^2(x_{st}).$$
\item $\Supp(\xi_s\xi_t)\cap (\Supp(\xi_t^2)\cup\Supp(\xi_s^2))=\emptyset$ if and only if $st=ts$.  In this case, $A_{st}=0.$
\end{enumerate}
\end{lemma}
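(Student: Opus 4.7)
The plan is to apply Chevalley's formula (Proposition~\ref{P:Chevalley}) to expand each of $\xi_s\xi_t$, $\xi_t^2$, and $\xi_s^2$ explicitly, and then to read off and intersect their supports. Since $t$ is a simple reflection, every length-two element $v \leq w$ of the form $v = s_\beta t$ is either $rt$ (with $\beta = \alpha_r$, for some simple $r \neq t$) or $tr$ (with $\beta = t(\alpha_r)$). Using $\omega_a(h_b) = \delta_{ab}$ together with the formula $t(h_r) = h_r - A_{rt}\, h_t$, a direct computation shows that the coefficient of $\xi_{rt}$ in $\xi_s\xi_t$ is $\omega_s(t(h_r)) = \delta_{sr} - A_{rt}\delta_{st} = \delta_{sr}$ (the last equality uses $s \neq t$), and the coefficient of $\xi_{tr}$ is $\omega_s(h_r) = \delta_{sr}$. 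So $\xi_s\xi_t$ equals $\xi_{st}$ (when $st \leq w$) plus $\xi_{ts}$ (when $ts \leq w$), where these two classes coincide precisely when $s$ and $t$ commute. An entirely analogous computation yields $\xi_t^2 = \sum_{r \neq t,\, rt \leq w} (-A_{rt})\, \xi_{rt}$, with $\xi_s^2$ given by the same formula upon swapping $s$ and $t$.

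I will then combine these expansions with two elementary observations. First, for any simple $r \neq t$ the length-two element $rt$ has support $\{r,t\}$, so $\xi_{rt} = \xi_{st}$ forces $r = s$. Second, by Lemma~\ref{L:ij}, when $A_{st} \neq 0$ exactly one of $st \leq w$ or $ts \leq w$ holds, whereas when $A_{st} = 0$ we have $st = ts \leq w$. For part~(1), the intersection $\Supp(\xi_s\xi_t) \cap \Supp(\xi_t^2)$ is non-empty if and only if some $r$ satisfies $rt \leq w$, $A_{rt} \neq 0$, and $rt \in \{st, ts\}$; by the first observation this forces $r = s$, reducing to ``$st \leq w$ and $A_{st} \neq 0$'', i.e. ``$st \leq w$ and $st \neq ts$''. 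In that case the intersection is the singleton $\{\xi_{st}\}$ and $\xi_t^2(x_{st}) = -A_{st}$, giving the claimed formula $A_{st} = -\xi_t^2(x_{st})$.

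For part~(2), the forward direction is immediate: if $st = ts$ then $A_{st} = A_{ts} = 0$, so the coefficients of $\xi_{st} = \xi_{ts}$ in both $\xi_t^2$ and $\xi_s^2$ vanish, and $\Supp(\xi_s\xi_t) = \{\xi_{st}\}$ is disjoint from $\Supp(\xi_t^2) \cup \Supp(\xi_s^2)$. For the converse, suppose $st \neq ts$, so $A_{st} \neq 0$; by Lemma~\ref{L:ij} either $st \leq w$, in which case part~(1) already supplies a class in $\Supp(\xi_s\xi_t) \cap \Supp(\xi_t^2)$, or $ts \leq w$, in which case swapping the roles of $s$ and $t$ and using the commutativity $\xi_s\xi_t = \xi_t\xi_s$ in cohomology supplies a class in $\Supp(\xi_s\xi_t) \cap \Supp(\xi_s^2)$. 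The argument is essentially bookkeeping; the only real point to be careful about is tracking the distinction between the non-commuting elements $st$ and $ts$ and remembering to invoke $\xi_s\xi_t = \xi_t\xi_s$ to handle the $ts \leq w$ branch of part~(2).
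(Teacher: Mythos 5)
Your proof is correct and follows essentially the same route as the paper: expand $\xi_t^2$, $\xi_s^2$, and $\xi_s\xi_t$ via Chevalley's formula, observe that the supports sit inside the set of classes $\xi_{rt}$, $\xi_{tr}$ with coefficients $-A_{rt}$ resp.\ $\delta_{sr}$, and compare supports using the fact that a length-two element determines its two letters. One small slip: when $A_{st}\neq 0$ it is not true that \emph{exactly} one of $st\le w$, $ts\le w$ holds (take $w=sts$); Lemma~\ref{L:ij} only gives \emph{at least} one, but that weaker statement is all your argument actually uses, so nothing breaks.
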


\begin{proof}
By Chevalley's formula, we have
\begin{align*}
\xi_t^2 =\sum_{\substack{s' : s' t \leq w \\ s' \neq t}} \omega_t(t(\alpha_{s'}^{\vee}))\,\xi_{s't}+ \sum_{\substack{s'' : ts'' \leq w\\ ts'' \neq s'' t}} \omega_t(\alpha_{s''}^{\vee})\,\xi_{ts''}.
\end{align*}
In the second sum, since $t \neq s''$, $\omega_t(\alpha_{s''}^{\vee}) = 0$. So
\begin{align*}
\xi_t^2 &=\sum_{\substack{s' : s' t \leq w \\ s' \neq t}} \omega_t(\alpha_{s'}^{\vee}-A_{s't} \alpha_t^\vee)\,\xi_{s't}
=\sum_{\substack{s' : s't \leq w \\ s' \neq t}} -A_{s't}\,\xi_{s't}.
\end{align*}
Since $A_{s't} = 0$ if and only if $s't = ts'$, we conclude that
$$\Supp(\xi_t^2)=\{\xi_{s't}\ |\ s'\in S(w)\ \text{such that}\
s't\neq ts' \ \text{and}\ s't\leq w \}.$$

If $s \neq t$, then
\begin{align*}
    \xi_s \xi_t & =\sum_{\substack{s' : s' t \leq w \\ s' \neq t}} \omega_s(t(\alpha_{s'}^{\vee}))\,\xi_{s't}+ \sum_{\substack{s'' : ts'' \leq w\\ ts'' \neq s'' t}} \omega_s(\alpha_{s''}^{\vee})\,\xi_{ts''} \\
    & = \sum_{\substack{s' : s' t \leq w \\ s' \neq t}} \omega_s(\alpha_{s'}^{\vee}-A_{s't} \alpha_t^\vee)\,\xi_{s't} +\sum_{\substack{s'' : ts'' \leq w\\ ts'' \neq s'' t}} \omega_s(\alpha_{s''}^{\vee})\,\xi_{ts''}  \\
\end{align*}
If $st \leq w$ then the first sum is $\xi_{st}$, and otherwise the first sum is $0$.
If $st \neq ts$, and $ts \leq w$, then the second sum is $\xi_{ts}$, while otherwise
it's $0$. So we conclude that $\Supp(\xi_s\xi_t)\subseteq \{\xi_{st},
\xi_{ts}\}$ with $\xi_{st}\in \Supp(\xi_s\xi_t)$ if and only if $st\leq w$.
This proves part (1).  For part (2), note that either $st\leq w$ or $ts\leq w$.
So if $\Supp(\xi_t^2)\cup\Supp(\xi_s^2)$ contains neither $\xi_{st}$ or
$\xi_{ts}$, we must have $st=ts$.  Conversely, if $st=ts$, then
$\xi_{st}=\xi_{ts}$ cannot belong to either $\Supp(\xi_t^2)$ or
$\Supp(\xi_s^2)$. Since $\Supp(\xi_s\xi_t)$ will always contain one of $\xi_{st}$
or $\xi_{ts}$, this proves part (2).
\end{proof}
\begin{corollary}\label{C:simple_products}
Let $s,t\in S(w)$ such that $s\neq t$.  If $st\leq w$, then $A_{st}=A'_{\sigma(s)\sigma(t)}$.
\end{corollary}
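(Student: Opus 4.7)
The plan is to apply Lemma \ref{L:simple_products} on both sides of the isomorphism $\phi$ and transport the relevant data. The key observation is that since $\phi$ identifies Schubert bases, it preserves supports: if $F \in H^*(X(w,A))$ with $F = \sum c_v \xi_v$, then $\phi(F) = \sum c_v \xi_{\sigma'(v)}$ where $\sigma'$ denotes the bijection $[e,w] \to [e,w']$ induced by $\phi$ (extending $\sigma$ from $S(w)$). In particular, $\phi(\Supp(F)) = \Supp(\phi(F))$, and the coefficient of $\xi_u$ in $F$ equals the coefficient of $\phi(\xi_u)$ in $\phi(F)$.

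I would split into two cases. First, suppose $st = ts$. Then $A_{st} = 0$ by Lemma \ref{L:simple_products}(2). Applying $\phi$ to the identity $\Supp(\xi_s \xi_t) \cap (\Supp(\xi_t^2) \cup \Supp(\xi_s^2)) = \emptyset$ and using that $\phi$ is a ring isomorphism with $\phi(\xi_s) = \xi_{\sigma(s)}$, $\phi(\xi_t) = \xi_{\sigma(t)}$, we obtain
\begin{equation*}
    \Supp(\xi_{\sigma(s)} \xi_{\sigma(t)}) \cap (\Supp(\xi_{\sigma(t)}^2) \cup \Supp(\xi_{\sigma(s)}^2)) = \emptyset.
\end{equation*}
By the ``if and only if'' in Lemma \ref{L:simple_products}(2) applied to $w' \in W(A')$, this forces $\sigma(s)\sigma(t) = \sigma(t)\sigma(s)$, and so $A'_{\sigma(s)\sigma(t)} = 0 = A_{st}$.

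Second, suppose $st \neq ts$. Then by Lemma \ref{L:simple_products}(1) we have $\Supp(\xi_s \xi_t) \cap \Supp(\xi_t^2) = \{\xi_{st}\}$. Pushing forward through $\phi$,
\begin{equation*}
    \Supp(\xi_{\sigma(s)} \xi_{\sigma(t)}) \cap \Supp(\xi_{\sigma(t)}^2) = \{\phi(\xi_{st})\},
\end{equation*}
which is in particular nonempty. Invoking Lemma \ref{L:simple_products}(1) on the $w'$-side forces $\sigma(s)\sigma(t) \leq w'$, $\sigma(s)\sigma(t) \neq \sigma(t)\sigma(s)$, and identifies this intersection as $\{\xi_{\sigma(s)\sigma(t)}\}$. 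Hence $\phi(\xi_{st}) = \xi_{\sigma(s)\sigma(t)}$. Since $\phi$ preserves the coefficients with respect to the Schubert basis, the coefficient of $\xi_{st}$ in $\xi_t^2$ equals the coefficient of $\xi_{\sigma(s)\sigma(t)}$ in $\xi_{\sigma(t)}^2$, i.e.\ $\xi_t^2(x_{st}) = \xi_{\sigma(t)}^2(x_{\sigma(s)\sigma(t)})$. Applying the formula $A_{st} = -\xi_t^2(x_{st})$ on both sides then gives $A_{st} = A'_{\sigma(s)\sigma(t)}$, completing the proof.

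The proof is essentially a transport-of-structure argument; there is no real obstacle, since all the work has been packaged into Lemma \ref{L:simple_products}. The only subtle point to be careful about is verifying that the intersection conditions of Lemma \ref{L:simple_products} are expressed purely in terms of the graded ring and its Schubert basis (they are), so that a graded ring isomorphism identifying Schubert bases automatically transports them — this is what makes the recovery of $A_{st}$ from cohomology possible.
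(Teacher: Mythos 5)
Your proof is correct and follows essentially the same route as the paper's: split on whether $s$ and $t$ commute, transport the support conditions of Lemma \ref{L:simple_products} through $\phi$ to pin down $\phi(\xi_{st})=\xi_{\sigma(s)\sigma(t)}$, and read off $A_{st}$ as the coefficient of $\xi_{st}$ in $-\xi_t^2$. No substantive differences from the paper's argument.
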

\begin{proof}
First suppose that $s,t\in S(w)$ commute.  By Lemma \ref{L:simple_products} part (2), we have $$\Supp(\xi_s\xi_t)\cap (\Supp(\xi_t^2)\cup\Supp(\xi_s^2))=\emptyset.$$
Since $\phi$ is a graded ring isomorphism identifying Schubert bases, we also have that
$$\Supp(\phi(\xi_s)\phi(\xi_t))\cap (\Supp(\phi(\xi_t)^2)\cup\Supp(\phi(\xi_s)^2))=\emptyset.$$
This implies that $\sigma(s),\sigma(t)$ also commute and $A_{st}=A'_{\sigma(s)\sigma(t)}=0.$
If $s,t$ do not commute, then Lemma \ref{L:simple_products} part (1) implies
$$\Supp(\xi_s\xi_t)\cap \Supp(\xi_t^2)=\{\xi_{st}\}.$$
Applying the map $\phi$ to this equation gives
$$\Supp(\phi(\xi_s)\phi(\xi_t))\cap \Supp(\phi(\xi_t)^2)=\{\phi(\xi_{st})\}.$$
Since this set is nonempty, Lemma \ref{L:simple_products} part (1) also implies that $\sigma(s)\sigma(t)\leq w'$ and $\sigma(s), \sigma(t)$ do not commute. Moreover, we know
that
$$\Supp(\phi(\xi_s)\phi(\xi_t))\cap \Supp(\phi(\xi_t)^2)=
    \Supp(\xi_{\sigma(s)} \xi_{\sigma(t)}) \cap \Supp(\xi_{\sigma(t)}^2)
    = \{\xi_{\sigma(s)\sigma(t)}\},$$
so $\phi(\xi_{st}) = \xi_{\sigma(s)\sigma(t)}$. Since $A_{st}$ is the coefficient
of $\xi_{st}$ in $-\xi_{t}^2$, and $A_{\sigma(s)\sigma(t)}'$ is the coefficient
of $\xi_{\sigma(s)\sigma(t)}$ in $-\xi_{\sigma(t)}^2 = \phi(-\xi_{t}^2)$, we conclude that $A_{st}
= A_{\sigma(s)\sigma(t)}'$.
\end{proof}
One consequence of Lemma \ref{L:cartan} and Corollary \ref{C:simple_products}
is that if $w=s_1\cdots s_k$ is a reduced expression, then
$\sigma(s_1)\cdots\sigma(s_k)\in W(A')$ is a reduced expression Cartan
equivalent to $w$.  What remains to be shown is that
$w'=\sigma(s_1)\cdots\sigma(s_k)$.  In order to show this, we next show how to
reconstruct the descent sets of elements in $W(A)$ from the ring structure of
$H^*(X(w,A)$.  For any $J\subseteq S$, let $W_J$ denote Coxeter subgroup
generated by $J$ and let $W^J$ denote the set of minimal length coset
representatives for the cosets $W(A)/W_J$.  The right descent set of $u\in
W(A)$ is
$$D_R(u):=\{s\in S : \ell(us)=\ell(u)-1\}.$$
If $u\leq w$, then $D_R(u)\subseteq S(w)$.  It is well known that $s\in D_R(u)$
if and only if $u\notin W^{\{s\}}$, and that $D_R(u)$ is the set of simple reflections
$s$ for which there are reduced expressions $u = s_1 \cdots s_k$ with $s_k =
s$.  For any subset $J\subseteq S(w)$, let $H^J$ be the subring of
$H^*(X(w,A))$ generated by $\{\xi_s\ |\ s\in S(w)\setminus J\}$, and let
$$E^J:=\bigcup_{F \in H^J} \Supp(F)$$ be the support set of $H^J$ in $E$.

\begin{lemma}\label{L:coset_reps}
    The map $E\arr [e,w]$ given by $\xi_v\mapsto v$ restricts to a bijection
    $E^J\arr W^J\cap [e,w]$.
\end{lemma}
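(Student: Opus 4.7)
The plan is to identify $H^J$ with the pullback of cohomology from a partial Schubert variety, from which the bijection follows cleanly. Let $w^J$ denote the minimum-length representative of the coset $w W_J$, so $w^J \in W^J$ and $w^J \leq w$. The projection $\pi : \mcX(A) \arr \mcX^J(A)$ restricts to a surjective morphism $\pi : X(w,A) \arr X^J(w^J,A)$ onto the corresponding Schubert variety in the partial flag variety, and the pullback $\pi^* : H^*(X^J(w^J,A);\Z) \arr H^*(X(w,A);\Z)$ is an injective graded ring homomorphism. By the standard compatibility of $\pi^*$ with Schubert bases \cite{Kumar02}, we have $\pi^*(\xi^J_v) = \xi_v$ for every $v \in W^J \cap [e,w^J]$, and since parabolic Bruhat order gives $W^J \cap [e,w] = W^J \cap [e,w^J]$, the image $\im(\pi^*)$ is precisely the $\Z$-span of $\{\xi_v : v \in W^J \cap [e,w]\}$.

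Since each generator $\xi_s$ of $H^J$ (for $s \in S(w) \setminus J$) equals $\pi^*(\xi^J_s)$, we have $H^J \subseteq \im(\pi^*)$, which immediately gives $E^J \subseteq \{\xi_v : v \in W^J \cap [e,w]\}$. For the reverse inclusion, I would show that $H^J$ and $\im(\pi^*)$ have the same $\Q$-span inside $H^*(X(w,A);\Q)$. Over $\Q$, the ring $H^*(X^J(w^J,A);\Q)$ is generated as a $\Q$-algebra by its degree-$2$ Schubert classes $\xi^J_s$ for $s \in S \setminus J$, via Borel's presentation for $\mcX^J(A)$ and its descent to Schubert subvarieties. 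Applying $\pi^*$, the $\Q$-algebra $\im(\pi^*) \otimes \Q$ is generated by $\{\xi_s : s \in S(w) \setminus J\}$, which equals $H^J \otimes \Q$. Hence $\im(\pi^*)/H^J$ is torsion, so for each $v \in W^J \cap [e,w]$ some nonzero integer multiple $N \xi_v$ lies in $H^J$, witnessing $\xi_v \in E^J$.

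The main obstacle is justifying the $\Q$-algebra generation of $H^*(X^J(w^J,A);\Q)$ by Schubert divisors: in finite type this is the classical Borel presentation for partial flag varieties, while in the Kac-Moody setting it follows from the corresponding presentation in \cite{Kumar02}. A more self-contained combinatorial alternative, which avoids invoking the partial flag variety, is to argue directly from Chevalley's formula (Proposition \ref{P:Chevalley}): for the forward containment, verify by induction on products that if $u \in W^J$ and $\xi_v \in \Supp(\xi_s \xi_u)$ for $s \notin J$, then with $\gamma := u^{-1}(\beta)$ the condition $\omega_s(\gamma^\vee) \neq 0$ rules out $s_\gamma \in W_J$ and, after a case analysis of the pairing $\alpha_t(\gamma^\vee)$ for $t\in J$, forces $v = u s_\gamma$ to remain in $W^J$; for the reverse, induct on $\ell(v)$ using a Bruhat cover $v'' \lessdot v$ with $v'' \in W^J$, and exploit the non-negativity of Chevalley coefficients in products of simple Schubert classes to prevent cancellation when forming $\xi_s \cdot F_{v''}$.
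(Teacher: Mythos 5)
Your forward containment $E^J \subseteq \{\xi_v : v \in W^J \cap [e,w]\}$ is correct and is a genuinely different route from the paper's: the paper verifies directly from Chevalley's formula that multiplying an element supported on $W^J$ by $\xi_s$ with $s \notin J$ stays supported on $W^J$ (the key point being that the only cover $v \gtrdot u$ outside $W^J$ is $v = us'$ with $s' \in J$, for which $\omega_s(u^{-1}\beta^\vee) = \omega_s(\alpha_{s'}^\vee) = 0$), whereas your observation that $H^J \subseteq \im(\pi^*)$ and that $\im(\pi^*)$ is the $\Z$-span of $\{\xi_v : v \in W^J \cap [e,w]\}$ gets the same conclusion more cleanly, at the cost of importing facts about the fibration $\mcX \arr \mcX^J$.

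The reverse containment is where the proposal has a genuine gap. The claim that $H^*(X^J(w^J,A);\Q)$ is generated as a $\Q$-algebra by its degree-$2$ Schubert classes is false: already for $\mcX^J = \Gr(2,4)$ (type $A_3$ with $J = \{s_1,s_3\}$) the degree-$2$ part is spanned by the single class $\xi^J_{s_2}$, so the subalgebra it generates has rank at most one in each degree, while $H^4(\Gr(2,4))$ has rank two. Hence $H^J \otimes \Q$ is in general a \emph{proper} subalgebra of $\im(\pi^*)\otimes\Q$, the quotient $\im(\pi^*)/H^J$ is not torsion, and no multiple of an individual $\xi_v$ need lie in $H^J$. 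The lemma survives only because $E^J$ records supports rather than spans: in the example, $\xi_{s_2}^2 = \xi_{s_1s_2} + \xi_{s_3s_2}$, so one element of $H^J$ already has both length-two elements of $W^J$ in its support. Your combinatorial fallback is essentially the paper's argument, but it omits the one step that makes it work: given $v \in W^J \cap [e,w]$, one must exhibit a cover $u \lessdot v$ with $u \in W^J$ \emph{together with} a simple reflection $s \notin J$ such that $\omega_s(u^{-1}(\beta^\vee)) \neq 0$, where $v = s_\beta u$. The paper achieves this by taking a reduced word $v = s_1 \cdots s_k$, setting $u = s_2 \cdots s_k$ (so $\beta = \alpha_{s_1}$), and invoking $v^{-1}R^- \cap R^+ \subseteq R^+ \setminus R^+_J$ for $v \in W^J$ (\cite[Exercise 1.3.E]{Kumar02}) to conclude $u^{-1}(\alpha_{s_1}) \notin R^+_J$, hence that it pairs nontrivially with some $\omega_s$, $s \notin J$; one then needs the nonnegativity of the Chevalley coefficients to prevent cancellation in the iterated product. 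Without this root-theoretic input the induction does not close.
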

\begin{proof}
We first show that $\{v \in [e,w] : \xi_v \in E^J\}$ is a subset of $W^J$.
Suppose $u \in W^J \cap [e,w]$, $s \in S(w) \setminus J$.  By Chevalley's
formula, if $\xi_v \in \supp(\xi_s \xi_u)$ then $v=s_\beta u$ for
some real root $\beta\in R^+$ with $\ell(v)=\ell(u)+1$ and
$\omega_s(u^{-1}(\beta^{\vee}))\neq 0$. Suppose $v$ is of this form. If $v
\not\in W^J$, then there is some simple reflection $s' \in D_R(v) \cap J$,
and consequently there is a reduced expression $v = s_1 \cdots s_k$ for $v$
with $s_k = s' \in J$. But $u=s_1\cdots \hat{s_\ell}\cdots s_k$, where
$\hat{s_{\ell}}$ means that $s_{\ell}$ is omitted from the product, and $u \in
W^J$, so we must have that $\ell=k$, and ultimately $v = u s'$. Hence
$s_{u^{-1} \beta} = u^{-1} s_\beta u = u^{-1} v = s'$, and since the
correspondence between roots and reflections is a bijection, $u^{-1} \beta =
\alpha_{s'}$ and $u^{-1} \beta^{\vee} = \alpha_{s'}^{\vee}$.  Since $s \not\in
J$, $\omega_s(u^{-1} \beta^{\vee}) = \omega_s(\alpha_{s'}^{\vee}) = 0$, so if
$v \not\in W^J$, then $\xi_v \not\in \Supp(\xi_s \xi_u)$.

Because $H^*(X(w),A)$ is graded, $\xi_s \in E^J$ for $s \in S(w)$ if and only
if $s \in W^J$. We've shown that if $u \in W^J \cap [e,w]$, $s \in S(W)
\setminus J$, and $\xi_v \in \Supp(\xi_s \xi_u)$, then $v \in W^J$, so we
conclude by induction on degree that $\{v \in [e,w] : \xi_v \in E^J\} \subset W^J$.

For the converse, we show that $\{\xi_v : v \in W_J \cap [e,w]\}$ is a subset
of $E^J$ by induction on length. Indeed, suppose that $v\in W^J\cap[e,w]$, and
write $v=s_1\cdots s_k$. Let $u:=s_1v=s_2\cdots s_k$.
Clearly $u\in W^J$, and by induction we can assume that $\xi_u\in E^J$.
Since $\alpha_{s_1}$ is an inversion of $v$, we have that $\alpha_{s_1}\in
R^+\cap v R^-$.  Hence $v^{-1}(\alpha_{s_1})\in v^{-1}R^+\cap R^-$ which
implies $v^{-1}(-\alpha_{s_1})\in v^{-1}R^-\cap R^+$.  Now \cite[Exercise
1.3.E]{Kumar02} says that, since $v\in W^J$, the set $v^{-1}R^-\cap
R^+\subseteq R^+\setminus R^+_J$, where $R_J^+$ is the subset of the positive
roots in the span of $\{\alpha_{s'} : s' \in J\}$.  Thus
$$u^{-1}(\alpha_{s_1})=u^{-1}s_1(-\alpha_{s_1})=v^{-1}(-\alpha_{s_1})\in R^+ \setminus R_J^+.$$
We conclude that there exists $s\notin J$ such that
$\omega_s(u^{-1}(\alpha^{\vee}_{s_1}))\neq 0$. Consequently $\xi_v\in
\Supp(\xi_s\xi_u)$, implying $\xi_v\in E^J$.
\end{proof}
We now define
$$\tilde D_R(\xi_u):=\{\xi_s\in \tilde S(w) : \xi_u\notin E^{\{s\}}\}.$$
\begin{lemma}\label{L:descents}
    For any $u\leq w$, the bijection $\tilde{S}(w) \arr S(w)$ given by
    $\xi_s\mapsto s$ restricts to a bijection $\tilde D_R(\xi_u)\arr D_R(u)$.
\end{lemma}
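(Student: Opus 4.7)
The plan is to derive this as an almost immediate corollary of Lemma \ref{L:coset_reps} applied with $J = \{s\}$ for each $s \in S(w)$. Unpacking the definition, $\xi_s \in \tilde{D}_R(\xi_u)$ means $s \in S(w)$ and $\xi_u \notin E^{\{s\}}$, so the content of the statement reduces to showing that for $s \in S(w)$ one has $\xi_u \notin E^{\{s\}}$ if and only if $s \in D_R(u)$, together with the fact that $D_R(u) \subseteq S(w)$ (which is needed for the bijection to restrict properly).

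First I would invoke Lemma \ref{L:coset_reps}: the map $\xi_v \mapsto v$ identifies $E^{\{s\}}$ with $W^{\{s\}} \cap [e,w]$. Since $u \leq w$ by hypothesis, membership $\xi_u \in E^{\{s\}}$ is therefore equivalent to $u \in W^{\{s\}}$. Combined with the standard Coxeter-theoretic fact already recalled in the paragraph preceding the statement, namely that $s \in D_R(u)$ if and only if $u \notin W^{\{s\}}$, this gives the desired equivalence
\begin{equation*}
    \xi_u \notin E^{\{s\}} \iff u \notin W^{\{s\}} \iff s \in D_R(u).
\end{equation*}

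To finish, I would verify the inclusion $D_R(u) \subseteq S(w)$, which ensures that restricting the bijection $\tilde{S}(w) \to S(w)$ yields a well-defined bijection $\tilde{D}_R(\xi_u) \to D_R(u)$. This is straightforward from the subword property of Bruhat order: $u \leq w$ implies $S(u) \subseteq S(w)$, and clearly $D_R(u) \subseteq S(u)$. There is no real obstacle here, since all of the combinatorial work has already been absorbed into Lemma \ref{L:coset_reps}; the main point of the statement is to repackage that result in a form convenient for subsequently reconstructing reduced words for $w$ from the ring structure of $H^*(X(w,A))$.
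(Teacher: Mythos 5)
Your proof is correct and follows the same route as the paper: the paper's own argument is exactly the one-line deduction from Lemma \ref{L:coset_reps} together with the fact that $s\in D_R(u)$ if and only if $u\notin W^{\{s\}}$. The extra check that $D_R(u)\subseteq S(w)$ is a reasonable addition, though the paper already records it in the paragraph preceding the lemma.
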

\begin{proof}
    Since $s\in D_R(u)$ if and only if $u \not\in W^{\{s\}}$, the lemma follows
    immediately from Lemma \ref{L:coset_reps}.
\end{proof}

\begin{lemma}\label{L:reduced_word_setup}
For every $\xi_s\in \tilde D_R(\xi_v)$, there exists a unique $\xi_u\in E^{\{s\}}$ for which $\xi_v\in \Supp(\xi_s\xi_u)$.  Moreover, $us=v$.
\end{lemma}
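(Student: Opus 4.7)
The plan is to split the statement into existence (there is such a $\xi_u$, with the bonus identity $us=v$) and uniqueness, using Chevalley's formula (Proposition \ref{P:Chevalley}) for existence and the subword property of Bruhat order for uniqueness. By Lemma \ref{L:descents}, the hypothesis $\xi_s\in\tilde D_R(\xi_v)$ just means $s\in D_R(v)$, so $v$ admits a reduced expression ending in $s$.

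For existence, I would set $u:=vs$. Then $\ell(u)=\ell(v)-1$ and $u\leq v\leq w$, so $u\in[e,w]$. Because $us=v$ has length $\ell(u)+1$, we have $s\notin D_R(u)$, i.e.\ $u\in W^{\{s\}}$, whence $\xi_u\in E^{\{s\}}$ by Lemma \ref{L:coset_reps}. To verify $\xi_v\in\Supp(\xi_s\xi_u)$, apply Chevalley's formula with the positive real root $\beta:=u(\alpha_s)$, which is positive precisely because $s\notin D_R(u)$. Then $s_\beta=usu^{-1}$, so $s_\beta u=us=v$, and $u^{-1}\beta^\vee=u^{-1}u(h_s)=h_s$, giving coefficient $\omega_s(h_s)=1\neq 0$.

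For uniqueness, suppose $\xi_{u'}\in E^{\{s\}}$ with $\xi_v\in\Supp(\xi_s\xi_{u'})$. Chevalley's formula forces $v=s_{\beta'}u'$ for some positive real root $\beta'$ with $\ell(v)=\ell(u')+1$, so $u'\lessdot v$ is a Bruhat cover. Fix a reduced expression $v=v_1\cdots v_k$ with $v_k=s$. By the subword property of Bruhat order (see e.g.\ \cite[Theorem 2.2.2]{Bjorner-Brenti05}), this cover is realized by a single deletion from this particular reduced word: $u'=v_1\cdots\hat{v_\ell}\cdots v_k$ for some index $\ell$, and this product is reduced. If $\ell<k$, the resulting reduced word for $u'$ still ends in $v_k=s$, forcing $s\in D_R(u')$ and contradicting $u'\in W^{\{s\}}$. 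Hence $\ell=k$, giving $u'=v_1\cdots v_{k-1}=vs=u$, which proves both uniqueness and the identity $us=v$.

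The main obstacle is the uniqueness argument: one has to choose a reduced word of $v$ tailored to the descent $s$ (ending in $s$), and then use the subword property \emph{for a single cover}, i.e.\ that $u'\lessdot v$ is realized by deleting one letter from \emph{any} chosen reduced word of $v$. Once this is in hand, the combinatorics of $W^{\{s\}}$ does all the work, and no further coefficient computation is needed.
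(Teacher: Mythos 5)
Your proof is correct and takes essentially the same route as the paper's: both set $u=vs$, use Lemma \ref{L:coset_reps} to translate membership in $E^{\{s\}}$ into membership in $W^{\{s\}}$, and get existence from Chevalley's formula with $\beta=u(\alpha_s)$, where the coefficient is $\omega_s(u^{-1}(\beta^\vee))=\omega_s(h_s)=1$ (a computation you make explicit where the paper compresses it into citations of Lemmas \ref{lem:bruhat} and \ref{L:coset_reps}). The only minor divergence is in uniqueness: the paper simply asserts, via the lifting property, that $vs$ is the unique Bruhat cover of $v$ lying in $W^{\{s\}}$, whereas you derive the same fact from the subword property applied to a reduced word of $v$ ending in $s$ --- which is exactly the argument already used in the paper's proof of Lemma \ref{L:coset_reps}, so both justifications are equally standard and correct.
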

\begin{proof}
Fix $\xi_s\in \tilde D_R(\xi_v)$.  By Lemma \ref{L:descents}, $s\in D_R(v)$ and hence $u=vs\leq v$.  The element $u$ is the unique element less than $v$ such that $\ell(v)=\ell(u)+1$ and $u\in W^{\{s\}}$.  Hence Lemma \ref{lem:bruhat} and \ref{L:coset_reps} imply $\xi_v\in \Supp(\xi_s\xi_u)$.
\end{proof}
Recall that if $w \in W(A)$, then $\Red(w)$ is the set of reduced words of $w$.
Lemma \ref{L:reduced_word_setup} implies we can inductively define $\widetilde\Red(\xi_v)$ by setting $\widetilde\Red(\xi_e):=\{\epsilon\}$ where $\epsilon$ denotes the empty sequence, and
$$\widetilde\Red(\xi_v):=\left\{(\xi_{s_1},\cdots, \xi_{s_m}, \xi_s)\quad :\quad \parbox{3in}{$\xi_s\in \tilde D_R(\xi_v), (\xi_{s_1},\cdots, \xi_{s_m})\in \widetilde\Red(\xi_{u})$ for $u$\\ the unique element such that $\xi_{u}\in E^{\{s\}}$ and\\ $\xi_v\in \Supp(\xi_s\xi_u)$}\right\}.$$

\begin{lemma}\label{L:reduced_words}
For any $v\in[e,w]$, the bijection $\tilde S(w)\arr S(w)$ given by $\xi_s\mapsto s$ induces a bijection between $\widetilde\Red(\xi_v)$ and $\Red(v)$.
\end{lemma}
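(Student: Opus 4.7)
My plan is to prove Lemma \ref{L:reduced_words} by induction on $\ell(v)$, using the previous three lemmas to match up the recursive definition of $\widetilde\Red(\xi_v)$ term-by-term with the usual recursive description of $\Red(v)$ via right descents. The base case $v = e$ is immediate, since by definition $\widetilde\Red(\xi_e) = \{\epsilon\}$ and $\Red(e) = \{\epsilon\}$.

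For the inductive step, fix $v \in [e,w]$ with $\ell(v) \geq 1$, and let $\psi : \tilde S(w) \arr S(w)$ denote the bijection $\xi_s \mapsto s$, extended componentwise to sequences. Given $(\xi_{s_1}, \ldots, \xi_{s_m}, \xi_s) \in \widetilde\Red(\xi_v)$, Lemma \ref{L:descents} gives $s \in D_R(v)$, and Lemma \ref{L:reduced_word_setup} identifies the unique $u$ with $\xi_u \in E^{\{s\}}$ and $\xi_v \in \Supp(\xi_s \xi_u)$ as $u = vs$, so $\ell(u) = \ell(v) - 1$. By the inductive hypothesis applied to $u$, the sequence $(s_1, \ldots, s_m) = \psi(\xi_{s_1}, \ldots, \xi_{s_m})$ is a reduced word for $u$, hence $(s_1, \ldots, s_m, s)$ is a reduced word for $us = v$. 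This shows $\psi$ maps $\widetilde\Red(\xi_v)$ into $\Red(v)$ injectively.

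For the reverse direction, take $(s_1, \ldots, s_m, s) \in \Red(v)$. Then $s \in D_R(v)$, so $\xi_s \in \tilde D_R(\xi_v)$ by Lemma \ref{L:descents}. Let $u = s_1 \cdots s_m = vs$; then $(s_1, \ldots, s_m) \in \Red(u)$ and $\ell(u) = \ell(v)-1$. I claim $u \in W^{\{s\}}$: if instead $s \in D_R(u)$, then $us$ has length $\ell(u)-1 = \ell(v)-2$, but $us = v$, a contradiction. Hence by Lemma \ref{L:coset_reps} we have $\xi_u \in E^{\{s\}}$, and by Lemma \ref{L:reduced_word_setup} this $\xi_u$ is precisely the unique element of $E^{\{s\}}$ with $\xi_v \in \Supp(\xi_s \xi_u)$. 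The inductive hypothesis provides the preimage $(\xi_{s_1}, \ldots, \xi_{s_m}) \in \widetilde\Red(\xi_u)$ of $(s_1, \ldots, s_m)$, and then the defining recursion for $\widetilde\Red(\xi_v)$ places $(\xi_{s_1}, \ldots, \xi_{s_m}, \xi_s)$ in $\widetilde\Red(\xi_v)$. Thus $\psi$ restricts to a bijection $\widetilde\Red(\xi_v) \arr \Red(v)$, completing the induction.

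There is no real obstacle here beyond ensuring that the three supporting lemmas line up correctly: the descent set lemma controls the last letter, the coset-representative lemma controls which truncated word is recognized in cohomology, and the uniqueness in Lemma \ref{L:reduced_word_setup} ensures the recursion has no ambiguity. The only small point to verify carefully is that $u = vs$ really does lie in $W^{\{s\}}$ so that $\xi_u \in E^{\{s\}}$ — this is the length argument in the previous paragraph.
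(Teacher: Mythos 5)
Your proof is correct and follows essentially the same route as the paper's: the paper's proof is a two-sentence sketch invoking the recursive description of $\Red(v)$ via right descents together with Lemmas \ref{L:descents} and \ref{L:reduced_word_setup}, and your argument is simply the fully written-out version of that induction (including the small check that $vs\in W^{\{s\}}$, which the paper leaves implicit inside Lemma \ref{L:reduced_word_setup}).
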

\begin{proof}
$\Red(v)$ is the set of sequences of the form $(s_1,\ldots,s_k)$ where $s_k\in D_R(v)$ and $(s_1,\ldots, s_{k-1})\in\Red(vs_k)$.  The lemma follows by induction on length using Lemmas \ref{L:descents} and \ref{L:reduced_word_setup}.
\end{proof}

\begin{proof}[Proof of Proposition \ref{P:cohomtoCartan}]  Suppose that $\phi:H^*(X(w,A))\arr H^*(X(w',A')$ is a graded ring isomorphism which identifies Schubert classes.  In particular, the restricted map $\phi|_{H^2(X(w,A))}$ induces a bijection $\sigma:S(w)\arr S(w')$.  Let $w=s_1\cdots s_k$ be a reduced expression. Lemma \ref{L:reduced_words} implies that $(\xi_{s_1},\ldots, \xi_{s_k})\in\widetilde\Red(\xi_w)$.  Since $\phi$ is an isomorphism that identifies Schubert classes, $\phi(\Supp(F))=\Supp(\phi(F))$ for any $F\in H^*(X(w,A))$.  As a result, $\phi(E^J)=(E')^{\sigma(J)}$ for any $J\subset S(w)$ and hence $\phi(\tilde D_R(\xi_v))=\tilde D_R(\phi(\xi_{v}))$.  Also we have that $\xi_v\in \Supp(\xi_s\xi_u)$ if and only if $$\phi(\xi_v)\in \Supp(\phi(\xi_s)\phi(\xi_u))=\Supp(\xi_{\sigma(s)}\phi(\xi_u)).$$
So Lemma \ref{L:reduced_word_setup} implies $(\xi_{\sigma(s_1)},\ldots, \xi_{\sigma(s_k)})\in\widetilde\Red(\phi(\xi_w))$.  Since $\phi$ is graded, $\phi(\xi_w)=\xi_{w'}$, and hence $\sigma(s_1)\cdots\sigma(s_k)\in\Red(w')$ by Lemma \ref{L:reduced_words}.  Finally, Corollary \ref{C:simple_products} says that $A_{st}=A_{\sigma(s)\sigma(t)}$ for all $st\leq w$.  Thus $w$ and $w'$ are Cartan equivalent.
\end{proof}
Together, Propositions  \ref{P:cartantoalg}, \ref{P:algisomtocohom}, and \ref{P:cohomtoCartan} complete the proof of Theorem \ref{T:iso}.

\subsection{Constructing a presentation of a Schubert variety}\label{SS:cohomology2}

In section, we describe how to construct a presentation of a Schubert variety
from geometric data. Suppose we have a variety $X$ which is isomorphic to a
Schubert variety $X(w,A)$ for some $w\in W(A)$, but the element $w$ and matrix
$A$ are unknown. We want to find a Cartan matrix $A'$ and $w' \in W(A')$ such
that $X \iso X(w',A')$. This can be done if we know the integral cohomology ring
$H^*(X;\Z)$, and the effective cone $X$ in $H^*(X) = H^*(X;\Z)$. Indeed, let $E$ be the
generators of the extremal rays of the effective cone. Since $X$ is isomorphic
to a Schubert variety, we know that $E$ is the Schubert basis for $H^*(X)$.
The proof of Proposition \ref{P:cohomtoCartan} then gives a procedure to
construct $w'$ and $A'$ from $H^*(X)$ and $E$.  Specifically, for any $F \in
H^*(X)$, we can write $F=\sum_{\xi\in E} c_\xi\, \xi$ for some integers $c_\xi$ and
define
\begin{equation*}
    \Supp(F):=\{\xi\in E\ |\ c_\xi\neq 0\}.
\end{equation*}
Let $\tilde{S}:= E\cap H^2(X)$.  We first construct a Cartan matrix $A'$ over the set
$\tilde{S}$ using Lemma \ref{L:simple_products}. Let $\zeta_1,\zeta_2\in \tilde{S}$ and consider the
following cases (where products denote the product in $H^*(X)$):
\begin{enumerate}
    \item If $\zeta_1=\zeta_2$, then set $A'_{\zeta_1\zeta_2} =2$.
    \item If $\zeta_1 \neq \zeta_2$ and $\Supp(\zeta_1\zeta_2)\cap
        (\Supp(\zeta_1^2)\cup\Supp(\zeta_2^2))=\emptyset$, then set $A'_{\zeta_1\zeta_2}=A'_{\zeta_2\zeta_1}=0$.
    \item If $\zeta_1 \neq \zeta_2$ and $\Supp(\zeta_1\zeta_2)\cap\Supp(\zeta_2^2)=\{\nu\}$, then write
        $\displaystyle \zeta_2^2=\sum_{\xi\in E} c_\xi\, \xi$, and set $A'_{\zeta_1\zeta_2} = -c_\nu$.
    \item If $\zeta_1 \neq \zeta_2$, $\Supp(\zeta_1\zeta_2) \cap \Supp(\zeta_2^2) = \emptyset$, and
        $\Supp(\zeta_1\zeta_2) \cap \Supp(\zeta_1^2) \neq \emptyset$, then set
        $A'_{\zeta_1\zeta_2}$ to any negative integer.
\end{enumerate}
By Lemma \ref{L:simple_products}, exactly one of these cases must hold for each
pair $(\zeta_1,\zeta_2)\in \tilde{S}^2$. In addition, if $\xi : S \arr
\tilde{S} : s \mapsto \xi_s$, then $A_{st} = A'_{\xi_s\xi_t}$ for all $st \leq
w$.

Next, observe that for any $\xi \in E$, we can construct $\tilde{D}_R(\xi)$
and $\widetilde{\Red}(\xi)$ strictly in terms of $H^*(X)$ and $E$, without referring to
$w$ or $A$. Indeed, for any $J \subseteq \tilde{S}$ we can define $H^J$ to be
the subalgebra of $H^*(X)$ generated by $J$, and $E^J$ to be the set
$\bigcup_{F \in H^J} \Supp(F) \subseteq E$.  We can then define
$\tilde{D}_R(\xi)$ and $\widetilde{\Red}(\xi)$ as in the previous section,
except that instead of making a distinction between $\xi_s$ and $s$, we write
$\zeta \in \tilde{S}$ in place of both of them (so for instance, we'd define
$\tilde{D}_R(\xi) = \{\zeta \in \tilde{S} : \xi \not\in E^{\{\zeta\}}\}$).

Let $\xi$ be the unique element of $E$ of highest degree, let
$(\zeta_1,\ldots,\zeta_k) \in \widetilde{\Red}(\xi)$, and let $w' := \zeta_1
\cdots \zeta_k \in W(A')$. Then $\xi$ must be $\xi_w$, and if we write $\zeta_i
= \xi_{s_i}$, then $s_1 \cdots s_k$ is a reduced word for $w$ by Lemma
\ref{L:reduced_words}. Thus $\xi$ is a Cartan equivalence between $(w,A)$ and
$(w',A')$, and so $X = X(w,A) \iso X(w',A')$.

\bibliographystyle{amsalpha}
\bibliography{isom}

\providecommand{\bysame}{\leavevmode\hbox to3em{\hrulefill}\thinspace}
\providecommand{\MR}{\relax\ifhmode\unskip\space\fi MR }
\providecommand{\MRhref}[2]{%
  \href{http://www.ams.org/mathscinet-getitem?mr=#1}{#2}
}
\providecommand{\href}[2]{#2}
\begin{thebibliography}{FMSS95}

\bibitem[AB16]{Abe-Billey16}
Hiraku Abe and Sara Billey, \emph{Consequences of the {L}akshmibai-{S}andhya
  theorem: the ubiquity of permutation patterns in {S}chubert calculus and
  related geometry}, Schubert calculus---{O}saka 2012, Adv. Stud. Pure Math.,
  vol.~71, Math. Soc. Japan, [Tokyo], 2016, pp.~1--52. \MR{3644818}

\bibitem[BB05]{Bjorner-Brenti05}
Anders Bj\"{o}rner and Francesco Brenti, \emph{Combinatorics of {C}oxeter
  groups}, Graduate Texts in Mathematics, vol. 231, Springer, New York, 2005.
  \MR{2133266}

\bibitem[BL00]{Billey-Lakshmibai00}
Sara Billey and V.~Lakshmibai, \emph{Singular loci of {S}chubert varieties},
  Progress in Mathematics, vol. 182, Birkh\"auser Boston, Inc., Boston, MA,
  2000.

\bibitem[Bor53]{Borel53}
Armand Borel, \emph{Sur la cohomologie des espaces fibr\'{e}s principaux et des
  espaces homog\`enes de groupes de {L}ie compacts}, Ann. of Math. (2)
  \textbf{57} (1953), 115--207. \MR{51508}

\bibitem[BS02]{Bergeron-Sottile02}
Nantel Bergeron and Frank Sottile, \emph{A {P}ieri-type formula for isotropic
  flag manifolds}, Trans. Amer. Math. Soc. \textbf{354} (2002), no.~7,
  2659--2705. \MR{1895198}

\bibitem[CC93]{Carter-Chen93}
R.~W. Carter and Y.~Chen, \emph{Automorphisms of affine {K}ac-{M}oody groups
  and related {C}hevalley groups over rings}, J. Algebra \textbf{155} (1993),
  no.~1, 44--94. \MR{1206622}

\bibitem[CM05]{Caprace-Muhlherr05}
Pierre-Emmanuel Caprace and Bernhard M\"{u}hlherr, \emph{Isomorphisms of
  {K}ac-{M}oody groups}, Invent. Math. \textbf{161} (2005), no.~2, 361--388.
  \MR{2180452}

\bibitem[Din97]{Ding97}
Kequan Ding, \emph{Rook placements and cellular decomposition of partition
  varieties}, Discrete Math. \textbf{170} (1997), no.~1-3, 107--151.
  \MR{1452940}

\bibitem[DMR07]{Develin-Martin-Reiner07}
Mike Develin, Jeremy~L. Martin, and Victor Reiner, \emph{Classification of
  {D}ing's {S}chubert varieties: finer rook equivalence}, Canad. J. Math.
  \textbf{59} (2007), no.~1, 36--62. \MR{2289417}

\bibitem[EG95]{Edidin-Graham}
Dan Edidin and William Graham, \emph{Characteristic classes and quadric
  bundles}, Duke Math. J. \textbf{78} (1995), no.~2, 277--299. \MR{1333501}

\bibitem[FMSS95]{FMSS95}
W.~Fulton, R.~MacPherson, F.~Sottile, and B.~Sturmfels, \emph{Intersection
  theory on spherical varieties}, J. Algebraic Geom. \textbf{4} (1995), no.~1,
  181--193. \MR{1299008}

\bibitem[Ful93]{Fulton93}
William Fulton, \emph{Introduction to toric varieties}, Annals of Mathematics
  Studies, vol. 131, Princeton University Press, Princeton, NJ, 1993, The
  William H. Roever Lectures in Geometry. \MR{1234037}

\bibitem[Hir84]{Hirschowitz84}
Andr\'{e} Hirschowitz, \emph{Le groupe de {C}how \'{e}quivariant}, C. R. Acad.
  Sci. Paris S\'{e}r. I Math. \textbf{298} (1984), no.~5, 87--89. \MR{741066}

\bibitem[Kac90]{Kac90}
Victor~G. Kac, \emph{Infinite-dimensional {L}ie algebras}, third ed., Cambridge
  University Press, Cambridge, 1990. \MR{1104219}

\bibitem[Kar13]{Karuppuchamy13}
Paramasamy Karuppuchamy, \emph{On {S}chubert varieties}, Comm. Algebra
  \textbf{41} (2013), no.~4, 1365--1368. \MR{3044412}

\bibitem[KN98]{Kumar-Nori98}
Shrawan Kumar and Madhav~V. Nori, \emph{Positivity of the cup product in
  cohomology of flag varieties associated to {K}ac-{M}oody groups}, Internat.
  Math. Res. Notices (1998), no.~14, 757--763. \MR{1637105}

\bibitem[Kum87]{Kumar87}
Shrawan Kumar, \emph{Demazure character formula in arbitrary {K}ac-{M}oody
  setting}, Invent. Math. \textbf{89} (1987), no.~2, 395--423. \MR{894387}

\bibitem[Kum02]{Kumar02}
\bysame, \emph{Kac-{M}oody groups, their flag varieties and representation
  theory}, Progress in Mathematics, vol. 204, Birkh\"{a}user Boston, Inc.,
  Boston, MA, 2002. \MR{1923198}

\bibitem[KW92]{Kac-Wang92}
V.~G. Kac and S.~P. Wang, \emph{On automorphisms of {K}ac-{M}oody algebras and
  groups}, Adv. Math. \textbf{92} (1992), no.~2, 129--195. \MR{1155464}

\bibitem[Mas08]{Masuda08}
Mikiya Masuda, \emph{Equivariant cohomology distinguishes toric manifolds},
  Adv. Math. \textbf{218} (2008), no.~6, 2005--2012. \MR{2431667}

\bibitem[RS16]{Richmond-Slofstra16}
Edward Richmond and William Slofstra, \emph{Billey-{P}ostnikov decompositions
  and the fibre bundle structure of {S}chubert varieties}, Math. Ann.
  \textbf{366} (2016), no.~1-2, 31--55.

\end{thebibliography}

\end{document}